\newcommand{\BlackBoxes}{\global\overfullrule5pt}
\newcommand\one{{\bf 1}}
\newcommand\eid{\stackrel{\rm d}{=}}
\numberwithin{equation}{section}   
\theoremstyle{plain}   
\newtheorem{thm}{Theorem}[section]   
\newtheorem{lem}{Lemma}[section]   
\newtheorem{cor}{Corollary}[section]   
\newtheorem{prop}{Proposition}[section]   
\theoremstyle{definition}   
\newtheorem{exmp}{Example}[section]   
\theoremstyle{remark}   
\newtheorem{rem}{Remark}[section]
\newcommand{\ind}{I\!}
\newcommand{\eqdis}{\stackrel{\lower0.2ex\hbox{$\scriptscriptstyle   
                    \mathrm{d}$}}{=}}   
\newcommand{\vague}{\stackrel{\lower0.2ex\hbox{$\scriptscriptstyle   
                    \it{v} $}}{\rightarrow}}   
\newcommand{\weak}{\stackrel{\lower0.2ex\hbox{$\scriptscriptstyle   
                    \it{w} $}}{\rightarrow}}   
\newcommand{\what}{\stackrel{\lower0.2ex\hbox{$\scriptscriptstyle   
                    \it{\hat{w}} $}}{\rightarrow}}   
\newcommand{\distr}{\stackrel{\lower0.2ex\hbox{$\scriptscriptstyle   
                    \it{d} $}}{\rightarrow}}   
\newcommand{\ntoinf}{\stackrel{\lower0.2ex\hbox{$\scriptscriptstyle   
                    \it{n\to\infty} $}}{\rightarrow}}   
\newcommand{\as}{\stackrel{\lower0.2ex\hbox{$\scriptscriptstyle   
                    \it{a.s.} $}}{\rightarrow}}   
\newcommand{\inprob}{\stackrel{\lower0.2ex\hbox{$\scriptscriptstyle   
      \Prob$}}{\rightarrow}}
\newcommand{\Eb}{\mathbf{E}}
\newcommand{\Prob}{{P}}
\newcommand{\Var}{\operatorname{Var}}   
\newcommand{\R}{\mathbf{R}}   
\newcommand{\N}{\mathbf{N}}   
\newcommand{\Z}{\mathbf{Z}} 
\newcommand{\Rdbar}{\overline{\mathbb{R}}^d}  
\newcommand{\Db}{\mathbf{D}}
\newcommand{\bfM}{\mathbf{M}}
\newcommand{\ud}{\mathrm{d}}   
\newcommand{\E}{{E}}
\newcommand{\zerobold}{{\mathbf{0}}}
\newcommand{\zbold}{{\mathbf{z}}}   
\newcommand{\xbold}{{\mathbf{x}}}   
\newcommand{\ybold}{{\mathbf{y}}}   
\newcommand{\Abold}{{\mathbb{A}}}   
\newcommand{\Bbold}{{\mathbf{B}}}   
\newcommand{\Zbold}{{\mathbf{Z}}}
\newcommand{\cbold}{{\mathbf{c}}}   
\newcommand{\ebold}{{\mathbf{e}}}
\newcommand{\Thetabold}{{\boldsymbol{\Theta}}} 
\newcommand{\Leb}{\operatorname{Leb}}
\newcommand{\RV}{{\rm RV}}
\newcommand{\bth}{\begin{theorem}}   
\newcommand{\ethe}{\end{theorem}}   
\newcommand{\bre}{\begin{remark}\em }   
\newcommand{\ere}{\end{remark}}   
\newcommand{\ble}{\begin{lemma}}   
\newcommand{\ele}{\end{lemma}}
\newcommand{\bde}{\begin{definition}}   
\newcommand{\ede}{\end{definition}}   
\newcommand{\bco}{\begin{corollary}}   
\newcommand{\eco}{\end{corollary}}   
\newcommand{\bpr}{\begin{proposition}}   
\newcommand{\epr}{\end{proposition}}   
\newcommand{\bexer}{\begin{exercise}}   
\newcommand{\eexer}{\end{exercise}}   
\newcommand{\bexam}{\begin{example}}   
\newcommand{\eexam}{\end{example}}   
\newcommand{\bfi}{\begin{fig}}   
\newcommand{\efi}{\end{fig}}   
\newcommand{\btab}{\begin{tab}}   
\newcommand{\etab}{\end{tab}}   
\newcommand{\beao}{\begin{eqnarray*}}   
\newcommand{\eeao}{\end{eqnarray*}\noindent}   
\newcommand{\beam}{\begin{eqnarray}}   
\newcommand{\eeam}{\end{eqnarray}\noindent}   
\newcommand{\beqq}{\begin{equation}}   
\newcommand{\eeqq}{\end{equation}\noindent}   
\newcommand{\bce}{\begin{center}}   
\newcommand{\ece}{\end{center}}   
\newcommand{\barr}{\begin{array}}   
\newcommand{\earr}{\end{array}}
\newcommand{\bdis}{\begin{displaymath}}   
\newcommand{\edis}{\end{displaymath}\noindent}
\newcommand{\vep}{\varepsilon}
\newcommand{\bbr}{{\mathbb R}}
\newcommand{\bbz}{{\mathbb Z}}
\newcommand{\cadlag}{c\`adl\`ag}
\newcommand{\bfZ}{{\bf Z}} 
\newcommand{\bfS}{{\bf S}}
\newcommand{\bfc}{{\bf c}}
\newcommand{\Np}{\mathbf{N}_p}
\newcommand{\Nps}{\mathbf{N}_{p,0}}   
 \def\@serieslogo{%
 \vbox to\headheight{%
 \parindent\z@ \fontsize{6}{7\p@}\selectfont
\endgraf
 \vss}}}
\begin{document}

\title[Large deviations for point processes]{Large
  deviations for point processes based on stationary sequences with heavy
  tails}


\author[H.~Hult]{Henrik Hult}
\address[H.~Hult]{Department of Mathematics, KTH, 100 44 Stockholm, Sweden}
\email{{hult@kth.se}}
\urladdr{{http://www.math.kth.se/\~{}hult}}

\author[G.~Samorodnitsky]{Gennady Samorodnitsky${}^\dagger$}
\address[G.~Samorodnitsky]{School of Operations Research and
  Industrial Engineering, Cornell University, 220 Rhodes Hall, Ithaca,
  NY 14853, USA}
\email{{gennady@orie.cornell.edu}}
\urladdr{{http://www.orie.cornell.edu/\~{}gennady/}}

\subjclass[2000]{60F10, 60G10, 60G55, 60B12}
\keywords{Stationary sequence, regular variation, large deviations,
  point process} 

\thanks{$^{\dagger}$
Hult's research was partially supported by the Swedish Research
Council. Samorodnitsky's research was partially supported by NSA grant 
H98230-06-1-0069 and ARO grant W911NF-07-1-0078
at Cornell University.}

\begin{abstract}
In this paper we propose a framework that enables the study of  large 
deviations for point processes based on stationary sequences with 
regularly varying tails. This framework allows us to keep track not of
the magnitude of the extreme values of a process, but also of the
order in which these extreme values appear. 
Particular emphasis is put on (infinite)
linear processes with random coefficients. The proposed framework 
provide a rather complete description of the joint asymptotic behavior 
of the large values of the stationary sequence. We apply the general 
result on large deviations for point processes to derive the
asymptotic decay of 
partial sum processes as well as ruin probabilities. 
\end{abstract}



\maketitle

\section{Introduction}

In some applications, including network
traffic and finance, time series are encountered where the marginal
distributions are heavy-tailed and clustering of extreme values is
observed.  More precisely, the marginal distributions have a
power-like decay and large values tend to occur at nearby points in
time, forming clusters. When studying the probability of rare events
it is usually important not only to determine the size and
frequency of clusters of extreme values but also to capture the
internal structure of the clusters. Unfortunately, in many
``standard'' limiting theorems dealing with heavy tailed processes the
fine structure of a cluster is lost in the limit, including the
ordering of the points in a cluster. This point is discussed in some
detail in Section \ref{sec:intuition} below. To overcome this problem, 
we propose a new framework for investigating large deviations for
stochastic processes with heavy tails. Specifically,
large deviations are studied at the level of point processes associated 
to the underlying stochastic process. In this way it is  possible
to preserve the fine structure of the clusters of large
values for a fairly general class of multivariate time series.

The processes studied here is the class of random coefficient linear
processes. It consists of $d$-dimensional time series $(X_k)_{k \in
  \Z}$ with the stochastic representation 
\begin{align}\label{x}
  X_k = \sum_{j \in \Z} A_{k,j}Z_{k-j}.
\end{align}
The sequence $(Z_j)_{j \in \Z}$ consists of independent and
identically distributed random vectors with values in $\R^p$.  A
generic element of this
sequence is denoted by $Z$. Each $A_{k,j}$ is a random $(d \times p)$ matrix.
It is assumed that the sequence $(\Abold_k)_{k \in \Z}$ is stationary
and each $\Abold_k$ is itself a sequence of matrices,
$\Abold_k = (A_{k,j})_{j\in \Z}$. It is assumed that the sequence
$(\Abold_k)_{k \in \Z}$ is independent of the sequence $(Z_k)_{k \in
  \Z}$. 

The probability of large values of the process $(X_k)$ depends, of
course, on the distributional assumptions on $Z$ and $A_{k,j}$. In
this paper the heavy-tailed case is considered; 
the distribution of $Z$ is assumed to be regularly varying. Certain moment
conditions will also be imposed on the random matrices
$A_{k,j}$ (see Section \ref{sec:2}).

Probability distributions with regularly varying tails have
become important building blocks in a wide variety of stochastic
models. Evidence for power-tail distributions is well documented in a
large number of applications including computer networks, telecommunications,
finance, insurance, hydrology, atmospheric sciences, geology, ecology
etc. For the multi-dimensional version of \eqref{x} the notion
of multivariate regular variation will be used.

A $d$-dimensional random vector $Z$ has
a regularly varying distribution if there exists a non-null Radon
measure $\mu$ on $\R^d \setminus \{0\}$ such that
\begin{align}
  \label{eq:rv}
  \frac{P(u^{-1} Z \in \cdot\,)}{P(|Z|>u)} \to \mu(\cdot)
\end{align}
in $\bfM_0(\R^d )$. Here $\bfM_0(\R^d)$ denotes the space of Radon
measures on $\R^d$ whose restriction to $\{|x| \geq  r\}$ is finite
for each $r > 0$, with $|\cdot|$ denoting the
Euclidean norm.  Convergence $m_n \to m$ in
$\bfM_0(\R^d)$ is defined as the  convergence $m_n(f) \to m(f)$ for
each bounded continuous function $f$ vanishing on some neighborhood of
the origin. See \cite{HL06} for more details on the space
$\bfM_0(\R^d)$. 

The limiting measure $\mu$ necessarily
obeys a homogeneity property:
there is an $\alpha > 0$ such that
$\mu(uB) = u^{-\alpha}\mu(B)$ for all Borel sets $B \subset \R^d
\setminus \{0\}$. This follows from standard regular variation
arguments (see e.g.~\cite{HL06}, Theorem 3.1). The notation
$Z \in \RV(\mu, \alpha)$ will be used for a random vector satisfying
\eqref{eq:rv}. See \cite{B00}, \cite{R87, R06}, and \cite{HL06} for
more on multivariate regular variation.

The class of stochastic models with representation \eqref{x} is quite
flexible and contains a wide range of useful time series. Here are 
some examples. 
\begin{exmp}[Linear process]\label{ex:linproc}
Let $(A_j)$ be a sequence of deterministic
real-valued $d \times p$-matrices. Then, assuming convergence, $X_k =
\sum_{j\in \Z} A_{j}Z_{k-j}$ is a linear process. It is, clearly,
stationary. The ($d$-dimensional) marginal distribution of this
process  has the representation \eqref{x}.
\end{exmp}


\begin{exmp}[SRE]  \label{ex:SRE1}
An important particular case of the random coefficient linear
process is the stationary solution of a stochastic recurrence equation
(SRE).

Assume that $p=d$, and let $(Y_k, Z_k)_{k \in \Z}$ be a sequence of
independent and identically distributed  pairs of
$d \times d$-matrices and $d$-dimensional random vectors.  Put
\begin{align*}
  \Pi_{n,m} = \left\{\begin{array}{ll} Y_n\cdots Y_m, & n \leq m,\\
  \text{Id}, & n > m,\end{array}\right.
\end{align*}
where Id is the $d \times d$ identity matrix. Under certain
assumptions assuring existence of a stationary solution of a
stochastic recurrence equation (SRE)
\begin{align} \label{e:SRE}
  X_{k} = Y_k X_{k-1} + Z_k, \quad k \in \Z\,,
\end{align}
this stationary solution can be represented by a random coefficient
linear process with $A_{k,j} = \Pi_{k-j+1,k}$, $j\geq 0$, and $A_{k,j}
= 0$, $j < 0$; \citep[e.g][]{K73}. Then the marginal distribution of
the stationary solution to the SRE is of the form \eqref{x}.
\end{exmp}

\begin{exmp}[Stochastic volatility]
  Let $(X_k)$ be the solution of the SRE in the previous example where
  we assume $X_k \in (0,\infty)^d$ a.s. Let $(V_k)$ be a sequence
  of independent and identically distributed random diagonal matrices
  independent of $(X_k)$. Then $U_k = V_k X_k$ has representation
  \begin{align*}
    U_k = \sum_{j\in \Z}  \tilde A_{k,j} Z_{k-j},
  \end{align*}
  where $\tilde A_{k,j} = V_k A_{k,j}$ and $A_{k,j}$ as in the
  previous example. The sequence $U_k$ can be intepreted as a
  stochastic volatility model where $X_k$ is the volatility.
\end{exmp}

\section{Convergence and tail behavior}
\label{sec:2}

Consider a time series $(X_k)$ with stochastic representation
\eqref{x}. Throughout this paper it is assumed that
\begin{align}\label{h}
\left.\begin{array}{ll}
    Z \in \RV(\mu,
\alpha) \text{ and}\\
\text{if $\alpha>1$, we assume additionally that $\E Z = 0$}.
\end{array} \right\}
\end{align}
To begin the study of extreme values for the time series \eqref{x} a 
first requirement is to establish conditions under which the infinite series
converge a.s.\ and determine the tail behavior of the distribution of
$X_k$. Under certain conditions results on the tail behavior
were obtained recently by
\cite{HS08I},  under a ``predictability'' assumption on the matrices
$(A_{k,j})$. Here we summarize the results and remind the reader that
in the current paper it is assumed that $(A_{k,j})$ and $(Z_j)$ are
independent.  Theorem \ref{trs} below describes the marginal tails;
for simplicity we drop the time subscript $k$ from both $X_k$ and
$A_{k,j}$.

Throughout the paper the notation $\|A\|$ is used for the operator
norm of a matrix $A$.
The summation index will be omitted when it is clear
what it is.
\begin{thm}\label{trs}
  Suppose that \eqref{h} holds and there is $0<\vep <\alpha$ such that
  \begin{align}
    \sum \E \|A_{j}\|^{\alpha-\vep}<\infty \quad \text{and} \quad \sum \E
    \|A_{j}\|^{\alpha+\vep}  &< \infty, \quad  \alpha \in (0,1)\cup(1,2),
    \label{cc1} \\
    \E\left( \sum \|
      A_{j}\|^{\alpha-\vep}\right)^{\frac{\alpha+\vep}{\alpha-\vep}}
    &<\infty, \quad \alpha \in\{1, 2\},
    \label{cc1.5} \\
    \E\left( \sum \|
      A_{j}\|^{2}\right)^{\frac{\alpha+\vep}{2}}
    &< \infty, \quad  \alpha \in
    (2,\infty).\label{cc2}
  \end{align}
  Then the series \eqref{x} converges a.s. and
  \begin{equation}\label{tailrandomsum}
    \frac{\Prob(u^{-1}X \in \cdot\,)}{\Prob(| Z|>u)} \to
    \E\Big[\sum \mu\circ A_{j}^{-1}(\cdot)\Big],
  \end{equation}
  in $\bfM_0(\R^d)$. 
\end{thm}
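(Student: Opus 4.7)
The proof has two parts: first, show that \eqref{x} converges a.s., and second, establish the tail asymptotic by truncating the series at a large $N$ and controlling the remainder. Independence of $(\Abold_k)$ and $(Z_k)$ is essential, since it allows a conditional, Breiman-type argument throughout.

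For a.s.\ convergence, I would condition on $\Abold = (A_j)_{j\in\Z}$. The moment conditions \eqref{cc1}--\eqref{cc2} on $\|A_j\|$, together with the regular variation of $Z$, give sufficient integrability of $A_j Z_{-j}$: in the regime $\alpha<1$ the summands are absolutely summable in $(\alpha-\vep)$-mean, which delivers absolute a.s.\ convergence; for $\alpha>1$ the centering $\E Z = 0$ permits applying Kolmogorov's three-series theorem conditionally on $\Abold$. The required variance summability follows from \eqref{cc1} when $\alpha\in(1,2)$, and from $\E|Z|^{2}<\infty$ combined with \eqref{cc1.5}/\eqref{cc2} when $\alpha\ge 2$. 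Fubini then yields a.s.\ convergence unconditionally.

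For the tail behavior, fix $N\ge 0$ and write $X = S_N + T_N$ with $S_N = \sum_{|j|\le N} A_j Z_{-j}$. Conditioning on $(A_j)_{|j|\le N}$ and using a multivariate Breiman lemma (as in \cite{HL06}) gives, for each $j$,
\[
\frac{\Prob(u^{-1} A_j Z \in \cdot)}{\Prob(|Z|>u)} \;\to\; \E[\mu\circ A_j^{-1}(\cdot)] \quad \text{in } \bfM_0(\R^d),
\]
because $\E\|A_j\|^{\alpha+\vep}<\infty$ under \eqref{cc1}--\eqref{cc2}. Since the variables $(Z_{-j})$ are independent, the standard additivity of multivariate regular variation for independent sums then yields
\[
\frac{\Prob(u^{-1} S_N \in \cdot)}{\Prob(|Z|>u)} \;\to\; \E\Bigl[\sum_{|j|\le N} \mu\circ A_j^{-1}(\cdot)\Bigr] \quad \text{in } \bfM_0(\R^d).
\]

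The crux is to show asymptotic negligibility of the remainder, namely
\[
\lim_{N\to\infty}\,\limsup_{u\to\infty}\, \frac{\Prob(|T_N|>\delta u)}{\Prob(|Z|>u)} \;=\; 0 \quad \text{for every } \delta>0,
\]
after which a converging-together lemma in $\bfM_0(\R^d)$ plus monotone convergence in $N$ deliver the stated limit (the limiting measure is finite on $\{|x|\ge r\}$ because $\sum\E\|A_j\|^{\alpha+\vep}<\infty$ in all three regimes). I would handle the negligibility case-by-case: for $\alpha\in(0,1)$ use $|T_N|\le\sum_{|j|>N}\|A_j\||Z_{-j}|$, Potter bounds, and a termwise Breiman estimate; for $\alpha\in(1,2)$ center, truncate each summand at level $u$, and combine a Fuk--Nagaev bound on the bounded part with a Markov bound of order $\alpha+\vep$ on the tails; for $\alpha\ge 2$ the second-moment estimate required in a Rosenthal-type bound is precisely what \eqref{cc2} supplies. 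The boundary values $\alpha\in\{1,2\}$ are where \eqref{cc1.5} intervenes: separate $\|A_j\|^{\alpha\pm\vep}$ summability no longer suffices and the mixed moment condition provides the cross term needed to close the Rosenthal estimate. The main obstacle is precisely this step in the boundary and $\alpha>2$ regimes, where the Rosenthal/Fuk--Nagaev constants must be carefully tuned against the available moment budget so that the resulting bound is proportional to $\sum_{|j|>N}\E\|A_j\|^{\alpha\pm\vep}\cdot\Prob(|Z|>u)$ with a coefficient vanishing as $N\to\infty$.
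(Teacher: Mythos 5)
Your outline is correct and follows essentially the same route as the proof this theorem actually rests on: the paper does not prove Theorem \ref{trs} here but imports it from \cite{HS08I}, where the argument is precisely a conditional (on the coefficients) Breiman/single-large-jump analysis of a finite truncation, a.s.\ convergence via a conditional three-series argument, and moment-inequality (Burkholder--Davis--Gundy/Fuk--Nagaev/Markov) control of the remainder series --- the negligibility statement you isolate, which reappears in this paper as \eqref{e:joint.lim} in Remark \ref{rk:joint.lim}. One small inaccuracy: under \eqref{cc1.5} and \eqref{cc2} you do not have $\sum \E\|A_j\|^{\alpha+\vep}<\infty$; finiteness of the limit measure on $\{|x|\geq r\}$ instead follows from $\E\sum\|A_j\|^{\alpha}<\infty$, which one gets from \eqref{cc1.5} (resp.\ \eqref{cc2}) via the embedding of $\ell^{\alpha-\vep}$ (resp.\ $\ell^{2}$) into $\ell^{\alpha}$ together with H\"older's inequality.
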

The right hand side of \eqref{tailrandomsum} is interpreted as
\begin{align*}
   \E\Big[\sum \mu\circ A_{j}^{-1}(B)\Big] = \E\Big[\sum \mu\{z : A_jz
   \in B\}\Big],
\end{align*}
for any Borel set $B \subset \R^d$.
When both $Z$ and $A_{k,j}$ are univariate ($d = p = 1$),
the limiting measure $\mu$ of $Z$ has the representation
\begin{align}\label{mu}
\mu(dz) = \big(w \,\alpha \, z^{-\alpha-1}I\{z > 0\} + (1-w)\,\alpha\,
(-z)^{-\alpha-1} I\{z < 0\}\big)dz
\end{align}
 for some $w \in
[0,1]$. Then \eqref{tailrandomsum} becomes
\begin{align*}
\frac{\Prob(X > ux)}{\Prob(|Z|>u)} \to  \sum \E\Big[|A_j|^\alpha
(w \ind\{A_j >0\} +   (1-w) \ind\{A_j<0\})\Big] x^{-\alpha},
\end{align*}
for each $x > 0$, with a similar expression for the negative tail.

\begin{exmp}[Linear process]
If $(X_k)$ is a linear process ($A_{k,j} = A_j$
deterministic) and $d = p = 1$, then
\begin{align*}
  \frac{\Prob(X > ux)}{\Prob(|Z|>u)} \to  \sum \Big[|A_j|^\alpha
  (w \ind\{A_j >0\} +   (1-w) \ind\{A_j<0\})\Big] x^{-\alpha}.
\end{align*}
\end{exmp}

\begin{exmp}[SRE] \label{ex:SRE}
  Suppose $(X_k)$ is the solution to the stochastic recurrence equation in
  Example \ref{ex:SRE1} with $Y$ satisfying $E\|Y\|^{\alpha+\vep} < 1$
  for some $\vep > 0$. Then, in the
  case $d = p = 1$,
  \begin{align*}
    \frac{\Prob(X > ux)}{\Prob(|Z|>u)} \to
    \frac{w(1-E(Y^+)^\alpha)+(1-w)E(Y^-)^\alpha}{(1-E(Y^+)^\alpha)^2 +
      (E(Y^-)^\alpha)^2} \,x^{-\alpha},
  \end{align*}
  see \cite{HS08I}, Example 3.3. Here, and throughout, $x^+ =
  \max\{x,0\}$ denotes the positive part of $x$, and $x^- =
  \max\{-x,0\}$ its  negative part. In particular, if $Y$ is
  nonnegative then $w = 1$,   $EY^- = 0$, and
  the expression in the last display reduces to
  \begin{align*}
    \frac{\Prob(X > ux)}{\Prob(|Z|>u)} \to (1-E Y^\alpha)^{-1} x^{-\alpha}.
  \end{align*}
\end{exmp}
\begin{exmp}[Stochastic volatility]
  Let $(X_k)$ be as in the previous example where
  $d = p = 1$ and $Y$ and $Z$ are nonnegative. Let $(V_k)$ be a sequence
  of independent and identically distributed random variables,
  independent of $(X_k)$. Suppose  $EV^{\alpha+\vep} < \infty$ for some 
  $\vep > 0$. Then $U_k = V_kX_k$ satisfies
  \begin{align*}
    \frac{\Prob(U > ux)}{\Prob(Z >u)} \to
    \frac{EV^\alpha}{1-EY^\alpha} \,x^{-\alpha},
  \end{align*}
\end{exmp}

\begin{rem} \label{rk:joint.lim}
  {\rm The following two observations will be useful for later
    reference. It follows from Remark 4.1 in \cite{HS08I} that
for any increasing truncation $n(x)\uparrow\infty$ one has
    \begin{equation} \label{e:joint.lim}
      \lim_{x\to\infty} \frac{\Prob(|\sum_{|j|>n(x)}
        A_jZ_j| > x)}{\Prob(|Z|>x)} = 0\,.
    \end{equation}
    Further, only values
    of $Z_j$ comparable to the level $x$ matter in the
    sense that
    \begin{equation} \label{e:only.large}
      \lim_{\tau\to 0}   \limsup_{x \to \infty}
      \frac{\Prob(|\sum_{j\in\bbz} A_j Z_j I\{|Z_j|\leq \tau
        x)\}| >x)}{\Prob(|Z|>x)} = 0\,.
    \end{equation}
  }
\end{rem}

\section{Why are the large deviations of  point processes needed?}
\label{sec:intuition}

In this section we discuss, somewhat informally, the  joint asymptotic
behavior of large values of the sequence $(X_k)$ in \eqref{x}. The
goal is to set up the necessary background and intuition for the
general result in Section \ref{xmain}. We consider two
special cases, that of sequences of independent and identically
distributed random variables as well as of moving average processes.

\subsection{Independent and identically distributed random variables}

Consider a sequence $(Z_k)$ of independent and identically
distributed real-valued random variables with $Z \in \RV(\alpha, \mu)$
and $\mu$ as in \eqref{mu}. As mentioned before, for $\alpha > 1$ it
is assumed that $E Z_k = 0$.  It is well known that for each $n \geq
1$ the vector 
$(Z_1, \dots, Z_n)$ is regularly
varying with limit measure $\mu^{(n)}$ concentrated on the
coordinate axes;
\begin{align*}
  \mu^{(n)}(dz_1, \dots, dz_n) = \sum_{i=1}^n \mu(dz_i)\prod_{j \neq
  i}\delta_0(dz_j) \,,
\end{align*}
where $\delta_x$ is a unit mass at $x$.
The interpretation is that, asymptotically, only one of the variables
$Z_{1}, \dots, Z_n$ will have large absolute value and each variable
is equally likely of being large.

The same intuition holds true when considering variables $Z_1, \dots,
Z_n$ in a time window  of length $n$ and letting $n \to \infty$, if the threshold increases with $n$
at an appropriate rate. Let $\gamma_n$ be a sequence with
$\gamma_n \to \infty$ and such that $n\Prob(|Z|> \gamma_n) \to 0$ as $n \to
\infty$. Then, the probability to see two different $Z$'s of size of
the order $\gamma_n$ among the variables
$Z_1,\dots, Z_n$, is small compared to seeing just one $Z$ of
size of the order $\gamma_n$. Indeed, for any $\vep > 0$,
\begin{align*}
&\frac{\Prob(\text{ there exist $1 \leq i < j \leq n$ such that
  $|Z_i| > \gamma_n \vep $ and $|Z_j| > \gamma_n \vep$
})}{\Prob(|Z_i| > \gamma_n \text{ for some } 1\leq i \leq n)} \\
&\quad \sim \frac{(n(n-1)/2)\Prob(|Z|>\gamma_n \vep)^2}{n\Prob(|Z|>
  \gamma_n)} \to 0.
\end{align*}
Here $a_n \sim b_n$ is shorthand for $\lim_{n\to \infty } a_n/b_n =
1$. 

A convenient description of the large values for the
sequence $Z_1, Z_2, \dots$ can be obtained by
considering the convergence of the point measures
\begin{align*}
  N_n = \sum_{k=1}^n \delta_{(k/n, \gamma_n^{-1} Z_k)}, \ n=1,2,\ldots,
\end{align*}
on the state space $[0,1] \times (\R^d\setminus \{0\})$. 
The assumption $n\Prob(|Z|> \gamma_n) \to 0$ as $n \to
\infty$ implies that $\gamma_n \to \infty$ too fast for a non-trivial weak
convergence of $N_n$ (described, for example, by Proposition 3.21 in
\cite{R87}). When $\gamma_n$ grows so fast, the second coordinates of
all points of the point measure $N_n$ will tend to zero with
probability 1. Since points with the zero second coordinate are
defined to be not in the state space on which the point measures live
(see, once again, \cite{R87}), it turns out that
the point measure $N_n$ converges almost
surely to the null measure, denoted $\xi_0$. Intuitively, this is
exactly the situation where large deviations in the space of point
measures might help: the
hope is to find a  sequence $r_n \to \infty$ such that
$r_n\Prob(N_n \in \cdot)$ converges to some limiting measure $m$ on the
space of point measures.

The above discussion makes it reasonable to expect
that this limiting measure, $m$,  is
concentrated on point measures with one point, corresponding, for each
$n=1,2,\ldots$,  to a
single large (at the scale $\gamma_n=n$) value of $Z_{k^*}$,
$k^*=1,\ldots, n$.  In fact, the limiting measure is
expected to be
\begin{align*}
  m(B) = (\Leb \times \mu)\{(t,z): \delta_{(t,z)} \in B\}, \ \
\text{$B$ a measurable set of measures. }
\end{align*}
The ``uniform'' coordinate $t$ is interpreted as the rescaled within
the set $\{1,\ldots, n\}$ time $k^*$ of the
large $Z_{k^*}$-value. Since all $Z_k$'s have equal probability of being
large, $t$ is ``uniformly distributed'' on $[0,1]$. The corresponding
value $z$ is governed by the limiting measure $\mu$ which describes the
large values of the $Z$-variables.
The suggested convergence can be rigorously established, as is done (in a
significantly more general setting) in Theorem \ref{mt} below.

It is possible to look at this convergence as 
the partial sum convergence of the underlying sequence $(\delta_{(k/n,
  \gamma_n^{-1} Z_k)})$ in the space of point measures. This is similar
to Sanov's theorem in the light-tailed case (see e.g.\ \cite{DZ98},
Section 6.2).

\subsection{A finite moving average}

Suppose that, in \eqref{x}, $p=d=1$, and
$A_{k,j} = A_j$ are deterministic coefficients with $A_j = 0$
if $j < 0$ or $j > q$. Then $(X_k)$ is a sequence with the representation
\begin{align*}
  X_k =  A_{0}Z_{k} + A_1 Z_{k-1} + \dots + A_{q}Z_{k-q}.
\end{align*}
Consider a time-window of length $n$ where, for now, $n$ is fixed.
That is, we consider the vector
$(X_1, \dots, X_n)$. Then, $(X_1, \dots, X_n)^T = A'(Z_{1-q}, \dots,
Z_n)^T$ where $A'$ is the $n \times (n+1+q)$-matrix
\begin{align*}
  A' = \left(\begin{array}{llllllll}
      A_q & A_{q-1}& \dots  & A_0     & 0     & \dots & \dots &0\\
      0   & A_q   & A_{q-1} &\dots    & A_0   & 0     & \dots &0\\
      \vdots   & \vdots    & \ddots & \ddots & \ddots& \ddots&  \ddots &\vdots\\
      0   &  \dots &    0   &  A_q  & A_{q-1}  & \dots  & \dots &A_0
      \end{array}\right)\,.
\end{align*}
Since the $Z$ variables are independent the vector $(Z_{1-q}, \dots,
Z_n)^T$ is regularly varying with limit measure concentrated on the
coordinate axes, just as in the previous example. That is,
asymptotically, only one variable among
$Z_{1-q}, \dots, Z_n$ will be large on the large deviations scale, and
they all have equal
probability of being large. Suppose $Z_{k^*}$ is large for some $1-q
\leq k^* \leq n$. Then, since all the other $Z_k$'s are small in
comparison to $Z_{k^*}$ we expect that $X_k$ is small for $k< k^*$ and
$k > k^*+q$ whereas for $k^* \leq k \leq k^*+q$ we have
\begin{align*}
  X_k \approx A_{k-k^*}Z_{k^*}.
\end{align*}
If we study the convergence of the sequence of measures
$\bigl(r_n\Prob(N_n \in \cdot)\bigr)$, where
\begin{align*}
  N_n = \sum_{k=1}^n \delta_{(k/n, \gamma_n^{-1} X_k)}, \,
n=1,2\ldots,
\end{align*}
is defined on the state space $[0,1] \times (\R \setminus \{0\})$,
we would expect that the limiting measure is concentrated on point
measures with $q+1$ points of the form $(t,x_i)$, with the same time
coordinate $t$ and space coordinates of the form $x_i = A_iz$ for some
$z$. In other words,  we expect the limiting measure to be
\begin{align*}
  m(B) = (\Leb \times \mu)\{(t,z): \sum_{i=0}^q
  \delta_{(t,A_i z)} \in B\}, \ \
\text{$B$ a measurable set of measures. }
\end{align*}
The clustering of extreme values is captured in the limiting measure
as there are $q+1$ points corresponding to large values of the
$X_k$'s. However, in the limit all these points have the same
time-coordinate $t$,  which means that the limiting measure does not keep
track of the order in which the large values arrived. That is, the complete
internal structure of the cluster of extreme values is not
captured. The order at which the large values arrive is, however, of
crucial importance when studying, for instance, the ruin
probabilities, or the long strange segments corresponding to the
process $(X_k)$; see e.g. \cite{asmussen:2000},  \cite{DZ98},
  \cite{MS00}, \cite{HLMS05}. Therefore, information is lost in the limit.

Our suggestion for resolving this loss of information is via
considering point measures similar to the measures $N_n$ above, but
enlarging the dimension of the state space so that each point of the
point measure describes more than one value of the process
$(X_k)$. It is intuitive that for a finite moving average of the above
example it is enough to keep track of $q+1$ consecutive observations
of the stationary process, and this tells us how large the state space
of the point measures should be. Specifically, we will consider the
point measures
\begin{align*}
  \sum_{k=1}^n \delta_{(k/n, \gamma_n^{-1} (X_k, X_{k-1}, \dots,
X_{k-q}))}, \, n=1,2\ldots\,.
\end{align*}
The above discussion should make it intuitive that, for such point
processes, the limiting measure  in a large deviations procedure
should be concentrated on point measures with $2q+1$ points of the form
\begin{align*}
(t, (A_0z, 0, \dots,0)), (t,
(A_1z,A_0z,0,\dots,0)), \dots, (t, (0,\dots,0,A_qz)).
\end{align*}
Notice that the information about the order in which the extreme
values arrived can be
obtained because the space coordinates are simply shifts of
each other.

In general, the complete information
on the extreme values of the process will only be completely preserved
if one keeps track of infinite (or increasing with $n$) number of
observations of the process $(X_k)$. This is possible to do, but we
have chosen not pursue this last possibility because it complicates
significantly the technical details of the construction of the point
measures and working with these measures. Instead, we have chosen to
to construct point measures based on finitely many consecutive observations
of the stationary process, as if it were a finite moving average. In
applications we are considering, this turns out to be sufficient via
an application of a truncation argument.

\section{Large deviations for point processes: the main result}
\label{xmain}

We start with specifying the precise assumptions on the normalizing
sequence $(\gamma_n)_{n\geq 1}$ that are needed to obtain a large
deviation scaling. We assume that
\begin{equation}\label{gamman}
  \left.\begin{array}{rl}
      (Z_1+\dots+Z_n)/\gamma_n \to 0, & \text{in probability and}\\
      \gamma_n/\sqrt{n^{1+\vep}} \to \infty, & \text{for some $\vep>0$
        if $\alpha = 2$},\\
      \gamma_n /\sqrt{n\log n} \to \infty, & \text{if $\alpha > 2$.}
    \end{array}\right\}
\end{equation}
Note that these conditions are exactly the same as those that were
used in  Theorem 2.1 in \citep{HLMS05} to obtain a functional level
large deviation result for the partial sums of independent and identically
distributed random vectors. If we set
\begin{align*}
  r_n = \frac{1}{n\Prob(|Z| > \gamma_n)}\,,
\end{align*}
then $r_n \to \infty$ as $n\to \infty$ and it turns out that
normalizing the probability measures of the point processes by $(r_n)$ is
the correct normalization to obtain a large deviation result.

For $q\geq 0$ define a point measure $N_n^q$
on the space $\Eb^q = [0,1] \times (\R^{d(q+1)} \setminus \{0\})$ by
\begin{align}\label{Nn}
  N_n^q = \sum_{k=1}^n
  \delta_{(k/n, \gamma_n^{-1} X_{k}, \gamma_n^{-1} X_{k-1},
    \dots,\gamma_n^{-1} X_{k-q})}.
\end{align}
We will show that the sequence of measures on the space of point
measures,
\begin{align*}
  m_n^q(\cdot) = r_n\Prob(N_n^q \in \cdot\,), \ n\geq 1\,,
\end{align*}
converges in the appropriate sense and compute the limiting
measure, called $m^q$, for any $q \geq 0$. The limiting measure will
give us a partial description of the extremal behavior of the
sequence $(X_k)$. This description will become more and more detailed
as the number $q$ is taken larger and larger.

A technical framework suitable for studying this problem is provided in
the Appendix, and we are using the notation introduced there. Let 
$\Np^q = \Np(\Eb^q)$ be the space of point
measures on $\Eb^q$ equipped with the vague topology.
The convergence $m_n^q \to m^q$ takes place in the space
$\bfM_0(\Np^q)$, the space of Radon measures on $\Np^q$ that are finite on
sets of the form $\{\xi: d(\xi,\xi_0) > r\}$, for each $r > 0$ (see
the Appendix). Here
$\xi_0$ denotes the null measure and $d(\cdot, \cdot)$ the metric  on
$\Np^q$ given by \eqref{metd}. With this metric, $(\Np^q, d)$ is a
complete separable metric space.

For a sequence of $d \times p$-matrices $\Abold =
(A_{k,j})_{j,k\in\Z}$ and $(t,z)\in [0,1]\times \R^p\setminus \{0\})$
we write
\begin{align*}
  T_{\Abold,q}(t,z) = \sum_{j \in \Z} \delta_{(t, A_{j,j} z,
A_{j-1,j-1}z,\dots,A_{j-q,j-q}z)}\,.
\end{align*}
Under certain conditions on the matrices in $A_{k,j}$,  $T_{\Abold,q}$
will be a map from $[0,1] \times\R^p\setminus \{0\})$ into the space
$\Np^q$. 

We are now ready to state the main result of this paper.

\begin{thm}\label{mt}
Suppose that \eqref{h}, \eqref{cc1}--\eqref{cc2}, and
\eqref{gamman} hold. Then, for any $q \geq 0$,
the stationary process $(X_k)_{k\in\Z}$ in \eqref{x} satisfies
 \begin{equation}\label{limitmeasure}
    m_n^q(\cdot) = r_n\Prob(N_n^q \in \cdot\,) \to \E[(\Leb \times
    \mu)\circ T_{\Abold,q}^{-1}(\cdot)] =: m^q(\cdot)
  \end{equation}
in $\bfM_0(\Np^q)$. In particular, $T_{\Abold,q}$ is, with
probability 1, a
map from $[0,1] \times \R^p\setminus \{0\})$ into the space $\Np^q$.
\end{thm}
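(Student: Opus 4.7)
My plan is to reduce from the infinite random-coefficient process to a finite moving average by truncation, prove the point-process large-deviation limit in the truncated case by a single-big-jump argument, and then recover Theorem \ref{mt} by passing to the limit in the truncation. All convergences are understood in $\bfM_0(\Np^q)$ and all probabilities are normalised by $r_n$; the hope is that the tail estimates collected in Remark \ref{rk:joint.lim} make the truncation harmless.

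\textbf{Step 1 (truncation of the process).} For $K\geq 1$ set $X_k^{(K)}=\sum_{|j|\leq K} A_{k,j}Z_{k-j}$ and write $N_n^{q,(K)}$ for the point measure obtained from \eqref{Nn} by replacing every $X_\ell$ with $X_\ell^{(K)}$. Denoting by $d$ the metric on $\Np^q$, the goal of this step is the uniform truncation estimate
\begin{equation*}
\lim_{K\to\infty}\limsup_{n\to\infty} r_n\,\Prob\bigl(d(N_n^q,N_n^{q,(K)})>\eta\bigr)=0\qquad\text{for every }\eta>0.
\end{equation*}
Since $d$ is bounded and weights small points lightly, the event $\{d(N_n^q,N_n^{q,(K)})>\eta\}$ forces at least one $k\in\{1,\ldots,n\}$ for which $\gamma_n^{-1}|X_{k-i}-X_{k-i}^{(K)}|$ fails to be small, so by stationarity of $(\Abold_k)$ and a union bound this reduces to the single-index tail estimate \eqref{e:joint.lim} applied to $\sum_{|j|>K}A_{0,j}Z_{-j}$ at scale $\gamma_n$.

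\textbf{Step 2 (limit for the truncation).} Conditionally on $(\Abold_k)$ the truncated process is a deterministic linear functional of the i.i.d.\ regularly varying sequence $(Z_j)$. Applying \eqref{e:only.large} to each coordinate of $N_n^{q,(K)}$ shows that configurations with more than one ``large'' $Z_j$ contribute negligibly on the $r_n$ scale, so only configurations in which a single $Z_{k^*}$ is of order $\gamma_n$ matter. For such a configuration the contribution of $Z_{k^*}$ to $X_{k-i}^{(K)}$ equals $A_{k-i,\,k-i-k^*}Z_{k^*}$ when $|k-i-k^*|\leq K$ and vanishes otherwise. Re-indexing by $j=k-k^*$ and using stationarity of $(\Abold_k)$ to shift the time origin, the cluster of non-negligible points of $N_n^{q,(K)}$ produced by $Z_{k^*}$ is distributed as the $K$-truncation of $T_{\Abold,q}(k^*/n,Z_{k^*})$. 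Integrating the uniform time variable $k^*/n$ against $\Leb$, the large $Z_{k^*}$ against $\mu$, and then averaging over $(\Abold_k)$ by independence of the coefficients and Fubini's theorem yields
\begin{equation*}
r_n\Prob\bigl(N_n^{q,(K)}\in\,\cdot\,\bigr)\to\E\bigl[(\Leb\times\mu)\circ T_{\Abold,q,K}^{-1}(\cdot)\bigr]
\end{equation*}
in $\bfM_0(\Np^q)$, where $T_{\Abold,q,K}$ is the obvious $K$-truncation of $T_{\Abold,q}$.

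\textbf{Step 3 (passage to the limit in $K$).} The moment hypotheses \eqref{cc1}--\eqref{cc2} imply $\sum_j\|A_{0,j}\|^\alpha<\infty$ a.s., which together with $\mu(\{|z|>r\})<\infty$ for every $r>0$ ensures both that $T_{\Abold,q}$ takes values in $\Np^q$ a.s.\ and that $T_{\Abold,q,K}\to T_{\Abold,q}$ a.s.\ in the metric $d$. Standard convergence theorems for Radon measures then give $\E[(\Leb\times\mu)\circ T_{\Abold,q,K}^{-1}]\to\E[(\Leb\times\mu)\circ T_{\Abold,q}^{-1}]$ in $\bfM_0(\Np^q)$. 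Combining this with Steps 1 and 2 through a standard approximation argument for convergence in $\bfM_0$ (integration against bounded continuous functions vanishing on a neighbourhood of $\xi_0$) yields \eqref{limitmeasure}.

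\textbf{Main obstacle.} The principal difficulty is Step 1. The tail estimate \eqref{e:joint.lim} provides control at a single time point, and a naive union bound over $k=1,\ldots,n$ would cancel the factor $r_n$. What is needed is to exploit that $d(N_n^q,N_n^{q,(K)})$ is a cumulative quantity that can be large only when \emph{many} or \emph{large} discrepancies accumulate simultaneously, and to combine this with a suitable conditioning argument on $(\Abold_k)$. Getting this truncation bookkeeping right, and ensuring that the conditional single-big-jump heuristic in Step 2 transfers cleanly to the unconditional limit, is where the bulk of the technical effort will sit.
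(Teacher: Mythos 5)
Your overall architecture---truncate the infinite sum, isolate the single large $Z$-value, then pass to the limit in the truncation---is the same as the paper's, but the execution differs in two material respects, and the second of them is where your sketch has a gap you have already half-noticed.

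\textbf{Fixed-$K$ versus $J_n\to\infty$ truncation.} You truncate $X_k$ at a fixed level $K$ and then send $K\to\infty$ as the last step. The paper instead truncates at an $n$-dependent level $J_n\uparrow\infty$ satisfying \eqref{e:Jn}. This difference matters for the bookkeeping. For $J_n\uparrow\infty$, the estimate \eqref{e:joint.lim} gives $P\bigl(|\sum_{|j|>J_n}A_{0,j}Z_{-j}|>\gamma_n\vep\bigr)=o\bigl(P(|Z|>\gamma_n\vep)\bigr)$, so after a union bound and multiplication by $r_n\,n$ the truncation error is $o(1)$ in a single limit $n\to\infty$. For a \emph{fixed} $K$, \eqref{e:joint.lim} does not apply as you invoke it (it requires an increasing truncation $n(x)\uparrow\infty$); what you get from Theorem~\ref{trs} is instead a ratio converging to a positive constant $c_K$, with $c_K\to 0$ only as $K\to\infty$. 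That is workable, but it forces you into an iterated-limit argument, and the correct citation for the single-index tail is Theorem~\ref{trs} together with the summability of the moment conditions \eqref{cc1}--\eqref{cc2}, not \eqref{e:joint.lim}.

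\textbf{Controlling the metric $d$ is the real gap, and the paper dodges it.} You propose to establish $\lim_{K\to\infty}\limsup_n r_n P\bigl(d(N_n^q,N_n^{q,(K)})>\eta\bigr)=0$ and then splice this into a generic approximation argument for convergence in $\bfM_0(\Np^q)$. As you yourself flag, the passage from ``$d(N_n^q,N_n^{q,(K)})>\eta$'' to ``some $\gamma_n^{-1}|X_{k-i}-X_{k-i}^{(K)}|$ is not small'' is not a one-line reduction. The metric \eqref{metd} aggregates $|\xi(h_i)-\xi'(h_i)|$ over a whole family of compactly supported $h_i$, and a naive bound $|N_n^q(h_i)-N_n^{q,(K)}(h_i)|\leq L_i\beta\cdot(\#\text{points near supp}(h_i))$ still leaves you to control the random count of near-boundary points, on the $r_n$ scale, simultaneously for all $i$. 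This can be done, but it is genuinely fussy. The paper avoids it entirely: by Theorem~\ref{sc}, convergence in $\bfM_0(\Np^q)$ only needs to be tested against the Lipschitz functionals $F_{g_1,g_2,\vep_1,\vep_2}$ of \eqref{e:new.Laplace}. Each of these is bounded by $1$, vanishes on a neighbourhood of $\xi_0$, and depends on $\xi$ only through $\xi(g_1)$ and $\xi(g_2)$ with $g_i$ compactly supported and Lipschitz. That reduces the truncation estimate to a uniform-in-$k$ comparison of $g_i$ evaluated at $\gamma_n^{-1}(X_{k},\dots,X_{k-q})$ versus its truncated analogue, for which the paper's Lemma~\ref{lem1} (and later Lemma~\ref{l:major.bound}) does exactly the work you need, with a single sequence $\beta_n\downarrow 0$ and no need to estimate the metric $d$ at all. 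If you want to keep your own structure, it would be much easier to replace the $d$-based approximation in your Step~1 by the Laplace-functional criterion of Theorem~\ref{sc}, which is what makes the whole argument close.

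Two smaller points. First, you should check explicitly that $m^q\in\bfM_0(\Np^q)$; the paper does this via $E\sum_j \ind\{\|A_j\|>\delta\}<\infty$, which you implicitly use but do not verify. Second, the stationarity shift you describe in Step~2 (replacing $A_{t+l,\cdot}$ by $A_{l,\cdot}$) is exactly what the paper does, and your single-big-jump decomposition into zero/one/two-or-more large $Z$'s mirrors the paper's \eqref{t1}--\eqref{t3}; that part of your plan is sound and needs only the estimates \eqref{e:only.large} and the definition of $r_n$ to close.
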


\begin{rem} \label{rk:cts.on.sphere}
For any $a>0$ the measure $m^q$ on  $\Np^q$ defined in
\eqref{limitmeasure} satisfies
\begin{align*}
&m^q\Big\{\xi:\, \xi \big([0,1] \times \big\{(x_0,\dots,x_q):\,
|x_i|=a, \, \text{ some }  i\in\{0,\dots,q\}\big\} \big)>0\Big\}
\\ &
= E\Big[\mu\big\{z : \sum_{j \in \Z}
\delta_{(A_{j,j}z,\dots,A_{j-q,j-q}z)}((x_0,\dots,x_q)\!:\!
|x_i|\! =\! a,\!
\text{ some }\! i \in\{0,\dots, q\}) \!> \!0\big\}\Big]\\
& \leq \sum_{j \in \Z} \E\mu\bigl\{z:\, |A_{j,j}z|=a \bigr\}=0
\end{align*}
by the scaling property of the measure $\mu$. This fact is useful for
establishing continuity almost everywhere with respect to the measure
$m^q$ of various mappings.
\end{rem}

\begin{exmp}[Independent and identically distributed random vectors]
  For a sequence of independent and identically distributed random
  vectors we have $A_{k,j} = A\ind\{j=0\}$, where  $A$ is a fixed
  $d \times p$-matrix, and, hence, for $q = 0$, the limiting
  measure $m^0$  is given by $m^0(\cdot) = (\Leb \times \mu)\circ
  T_{iid}^{-1}(\cdot)$ where $T_{iid}$ is the mapping
  \begin{align*}
    T_{iid}(t,z) = \delta_{(t, z)}.
  \end{align*}
\end{exmp}
\begin{exmp}[Linear process] \label{ex:linproc2}
  For a linear process the
  matrices $A_{k,j}= A_j$, $j\in \Z$ are deterministic. The limiting
  measure $m^q$ is given by $m^q(\cdot) = (\Leb \times \mu)\circ
  T_{A,q}^{-1}(\cdot)$,
  with  the mapping $T_{A,q}$ simplifying to
  \begin{align*}
    T_{A,q}(t,z) = \sum_{j \in \Z} \delta_{(t,A_j z, A_{j-1}z,\dots,A_{j-q}z)}.
  \end{align*}
\end{exmp}

\subsection*{Proof of Theorem \ref{mt}}
By Theorem \ref{sc} we need to prove that the measure $m^q$ in
\eqref{limitmeasure} belongs to $\bfM_0(\Np^q)$, and that
\begin{equation} \label{e:prove}
  m_n(F_{g_1,g_2,\vep_1,\vep_2}) \to  m^q(F_{g_1,g_2,\vep_1,\vep_2})
\end{equation}
for all  Lipschitz functions $g_1,g_2 \in C_K^+(\Eb^q)$
and $\vep_1,\vep_2 > 0$, where the functions
$F_{g_1,g_2,\vep_1,\vep_2}$ are given in \eqref{e:new.Laplace} in the
Appendix. For the first statement, it is enough to prove that for each
$\delta>0$,
$$
\E S_\delta=:\ E\left( \sum_{j \in \Z} \ind\{
  \|A_j\|>\delta\}\right)<\infty\,.
$$
This is an easy consequence of conditions
\eqref{cc1}--\eqref{cc2}. For example, if $0<\alpha\leq 2$, then for
$0<\vep<\alpha$,
$$
\E S_\delta\leq \delta^{-(\alpha-\vep)}  \sum_{j \in \Z} \E
\|A_{j}\|^{\alpha-\vep}<\infty \,,
$$
and the case $\alpha>2$ is similar.

We now prove \eqref{e:prove}. Note that
\begin{align}
  &m_n(F_{g_1,g_2,\vep_1,\vep_2}) = \label{e:first.rhs} \\
  &r_n \E\left[\left(1-\exp\Big\{-\Big[\sum_{k=1}^n
      g_{1}(\frac{k}{n},
      \frac{X_{k}}{\gamma_n},\dots,\frac{X_{k-q}}{\gamma_n}) -
      \vep_1\Big]_+\Big\}\right)\right.\nonumber\\
  &\qquad \times \left. \left(1-\exp\Big\{-\Big[\sum_{k=1}^n
      g_{2}(\frac{k}{n},
      \frac{X_{k}}{\gamma_n},\dots,\frac{X_{k-q}}{\gamma_n}) -
      \vep_2\Big]_+\Big\}\right)\right].\nonumber
\end{align}
The first step is to truncate the infinite sum in the definition of $X_{k}$,
replacing $X_{k}$ by $\sum_{|j|\leq
  J_n}A_{k,j}Z_{k-j}$, as follows.  Let $(J_n)$ be a sequence of
positive numbers such that $J_n \to \infty$ and
\begin{equation} \label{e:Jn}
\left. \begin{array}{rl}
J_n = o(n) & \text{if $0<\alpha<1$,} \\
J_n = o\bigl( \min( n,\gamma_n/l(\gamma_n))\bigr) & \text{if $\alpha=1$,} \\
J_n = o\bigl( \min( n,\gamma_n)\bigr) & \text{if $\alpha>1$,}
\end{array}\right\}
\end{equation}
where for $x>0$, $l(x) = E\bigl(|Z|I\{|Z|\leq x\}\bigr)$. The
conditions on the asymptotic growth of $J_n$ will be used below.

By Lemma   \ref{lem1}
there is a sequence $\beta_n\downarrow 0$ such that
$$
r_n\Prob\Big(\max_{\scriptsize{1\leq k \leq
    n}} \frac{1}{\gamma_n}\Big|\sum_{|j|>J_n}
A_{k,j}Z_{k-j}\Big|> \beta_n\Big) \to 0
$$
as $n\to\infty$. Therefore, the expression in the right hand side of
\eqref{e:first.rhs} is within $o(1)$ of
\begin{align*}
r_n \!\E\Big[\Big(1\!-\!\exp\Big\{-\Big[\sum_{k=1}^n
    g_{1}\Big(\frac{k}{n}, R_{k,n}+&\frac{1}{\gamma_n}\sum_{|j|\leq J_n} A_{k, j}
    Z_{k-j}, \dots, \\
    R_{k-q,n}+&\frac{1}{\gamma_n}\sum_{|j|\leq J_n} A_{k-q, j}
    Z_{k-q-j}\Big)-\vep_1\Big]_+\Big\}\Big) \\
   \times \Big(1-\exp\Big\{-\Big[\sum_{k=1}^n
    g_{2}\Big(\frac{k}{n}, R_{k,n}+&\frac{1}{\gamma_n}\sum_{|j|\leq J_n} A_{k, j}
    Z_{k-j},\dots, \\
    R_{k-q,n}+&\frac{1}{\gamma_n}\sum_{|j|\leq J_n} A_{k-q, j}
    Z_{k-q-j}\Big)-\vep_2\Big]_+\Big\}\Big)\Big] \\
  :=r_n\E(\Theta_n)\,, \qquad \qquad \qquad \qquad \quad &
\end{align*}
where $(R_{k,n})$ are random variables satisfying $|R_{k,n}|\leq
\beta_n$ for all $k,n$.

  To proceed we use the intuitive idea that only one of the $Z$'s
  is likely to be large. Take
  $\tau>0$. The above expression can be decomposed as
  \begin{align}
    &r_n\E(\Theta_n\ind{\{\textrm{ all $|Z_{-J_n-q+1}|, \dots,
    |Z_{n+J_n}|$ are less than $\tau \gamma_n$}\}}) \nonumber \\
      & \quad +
    r_n\E(\Theta_n \ind{\{\textrm{exactly one of $|Z_{-J_n-q+1}|, \dots,
    |Z_{n+J_n}|$ exceeds $\tau \gamma_n$}\}}) \nonumber \\
    &\quad +
    r_n\E(\Theta_n \ind{\{\textrm{at least two of $|Z_{-J_n-q+1}|, \dots,
    |Z_{n+J_n}|$ exceed $\tau \gamma_n$}\}}) \nonumber \\
    &= r_n\E\Bigl(\Theta_n \ind{\Bigl\{\bigcap_{t=-J_n-q+1}^{n+J_n} |Z_t|\leq
    \tau \gamma_n\Bigr\}}\Bigr) \label{t1} \\
    &\quad + r_n\E\Bigl(\Theta_n \ind{\Bigl\{\bigcup_{t=-J_n-q+1}^{n+J_n}
    \bigcap_{{s=-J_n-q+1, \ldots, n+J_n}\atop{s\neq t}} \{|Z_{t}|>
      \tau \gamma_n, |Z_s| \leq \tau \gamma_n\}\Bigr\}}\Bigr)
    \label{t2}\\
    &\quad + r_n\E\Bigl(\Theta_n \ind{\Bigl\{\bigcup_{t=-J_n-q+1}^{n+J_n}
    \bigcup_{{s=-J_n-q+1, \ldots, n+J_n}\atop{s\neq t}}\{|Z_t|> \tau \gamma_n,
    |Z_s|> \tau \gamma_n \}\Bigr\}}\Bigr). \label{t3}
  \end{align}
  We claim that the main contribution comes from \eqref{t2} and that
  the contributions from the other terms vanish as $n\to\infty$ and
  then $\tau \to 0$. Let us start with
  \eqref{t1}. Recall that  $g_{1}$  and $g_{2}$ have compact supports in
  $\Eb^q = [0,1]\times (\R^{d(q+1)}\setminus\{0\})$. Hence, there is a
   $\delta >0$ such that $\bigl([0,1] \times
  \{(x_0,\dots,x_q): \max\{|x_0|,\dots,|x_q|\} < \delta\}   \bigr)\cap
  \{\textrm{support}(g_{1})\cup
  \textrm{support}(g_{2})\} =    \emptyset$. On the set
  $\cap_{t=-J_n-q+1}^{n+J_n} \{|Z_t|\leq   \tau \gamma_n\}$ we have, for
  large $n$,
  \begin{align*}
    &r_n\E\Bigl(\Theta_n
    \ind{\Bigl\{\cap_{t=-J_n-q+1}^{n+J_n} |Z_t|\leq \tau
    \gamma_n\Bigr\}}\Bigr) \\
    &\leq
    r_n\E\Big(\Theta_n I{\Big\{\bigcup_{k=1-q}^n \Big\{ \big|R_{k,n} +
    \frac{1}{\gamma_n} \sum_{|j|\leq J_n} A_{k, j}
    Z_{k-j}I\{|Z_{k-j}|\leq \tau \gamma_n\}\big| > \delta \Big\}}\Big) \\
    & \leq r_n \Prob\Big(\bigcup_{k=1-q}^n \Big\{\big|R_{k,n}+
    \frac{1}{\gamma_n}\sum_{|j|\leq J_n} A_{k, j}
    Z_{k-j}I\{|Z_{k-j}|\leq \tau \gamma_n\}\big| >\delta \Big\}\Big)\\
    & \leq r_n (n+q)\,\Prob\Big(\big|\sum_{|j|\leq J_n}
    A_{0, j}\,
    Z_{j}I\{|Z_{j}|\leq \tau \gamma_n\}\big| > \gamma_n \delta/2 \Big) \to 0,
  \end{align*}
  as $n\to \infty$ and then $\tau \to 0$, by appealing to
  \eqref{e:only.large} (the last inequality used the fact that
  $\beta_n \downarrow 0$).
  For \eqref{t3} we observe that for any $\tau>0$
  \begin{align*}
    &r_n\E\Bigl(\Theta_n \ind{\Bigl\{\bigcup_{t=-J_n-q+1}^{n+J_n}
\bigcup_{{s=-J_n-q+1, \ldots, n+J_n}\atop{s\neq t}}\{|Z_t|> \tau \gamma_n,
      |Z_s|> \tau \gamma_n \}\Bigr\}}\Bigr) \\
    &\quad \leq r_n\Prob\Bigl(\bigcup_{t=-J_n-q+1}^{n+J_n}
\bigcup_{{s=-J_n-q+1, \ldots, n+J_n}\atop{s\neq t}}\{|Z_t|> \tau \gamma_n,
      |Z_s|> \tau \gamma_n \}\Bigr)\\
    &\quad \leq r_n(n+q+2J_n)^2 \Prob(|Z|>\tau \gamma_n )^2 \to 0
  \end{align*}
  as $n\to\infty$ by the definition of $r_n$
  and the fact that $J_n/n$ is bounded. Hence, as claimed, the main
  contribution comes from     \eqref{t2}. Since the union is disjoint we
  may rewrite \eqref{t2} as
  \begin{align}
    \sum_{t=-J_n-q+1}^{n+J_n} \! r_n\! \E\!\Big[\Big(1\! - \!
    \exp\Big\{-\Big[\sum_{k=1}^n
    g_{1}\Big(\frac{k}{n}, R_{k,n}+&
    \frac{1}{\gamma_n}\sum_{|j|\leq J_n} \! A_{k, j}
    Z_{k-j},\dots, \nonumber \\
    R_{k-q,n}+&
    \frac{1}{\gamma_n}\sum_{|j|\leq J_n} \! A_{k-q, j}
    Z_{k-q-j}\Big)-\vep_1\Big]_+\Big\}\Big) \nonumber \\
    \times \Big(1\!-\!\exp\Big\{-\Big[\sum_{k=1}^n
    g_{2}\Big(\frac{k}{n}, R_{k,n}+&
    \frac{1}{\gamma_n}\sum_{|j|\leq J_n} \! A_{k, j}
    Z_{k-j},\dots, \nonumber \\
    R_{k-q,n}+&
    \frac{1}{\gamma_n}\sum_{|j|\leq J_n} \! A_{k-q, j}
    Z_{k-q-j}\Big)-\vep_2\Big]_+\Big\}\Big)
    \nonumber \\
    \times \ind\{|Z_t|> \tau \gamma_n, \,
  |Z_s|\leq \tau \gamma_n,  \text{all } s=-J_n&-q+1, \ldots,
    n+J_n, \, s\neq t\} \Big].  \label{p1}
\end{align}
As $|Z_t|$ is large and $|Z_s|$ is small, $s \neq t$, we can practically
ignore the
contribution from the $|Z_s|$-terms. To be precise we claim that the above
expression is
asymptotically equal (written $a_n \sim b_n$) to
\begin{align}
  \sum_{t=-J_n-q+1}^{n+J_n} \!\!\!\!\!\!\! r_n \E\!\Big[\!&\Big(1\! - \!
  \exp\! \Big\{\!\!-\!\!\Big[\sum_{k=1}^n
  g_{1}\big(\frac{k}{n},
  R_{k,n}\!+\! \frac{1}{\gamma_n}\!\! \sum_{|j|\leq J_n} \!\!
  A_{k, j}
  Z_{k-j}\ind\{t\!=\!k\!-\!j\},\dots, \nonumber \\
  & \qquad R_{k-q,n}+
    \frac{1}{\gamma_n}\sum_{|j|\leq J_n} \! A_{k-q, j}
    Z_{k-q-j}\ind\{t\!=\!k\!-q-\!j\}\big)\!-\!\vep_1\Big]_+\Big\}\Big)
  \nonumber \\
  \times & \Big(1\! - \!
  \exp\! \Big\{\!\!-\!\!\Big[\sum_{k=1}^n
  g_{2}\big(\frac{k}{n},
  R_{k,n}\!+\!\frac{1}{\gamma_n}\!\! \sum_{|j|\leq J_n} \!\!
  A_{k, j} Z_{k-j}\ind\{t\!=\!k\!-\!j\}, \dots, \nonumber \\
  & \qquad
  R_{k-q,n}\!+\!\frac{1}{\gamma_n}\!\! \sum_{|j|\leq J_n} \!\!
  A_{k-q, j} Z_{k-q-j}\ind\{t\!=\!k\!-q-\!j\}\big)\!-\!\vep_1\Big]_+\Big\}\Big)
  \nonumber \\
  \times & \ind\{|Z_t|> \tau \gamma_n\}\Big]
  \nonumber \\
  &\hspace{-1cm}=: \sum_{t=-J_n-q+1}^{n+J_n}r_n\E[\Theta'_n \ind\{|Z_t|> \tau
  \gamma_n\} ]\,.\label{p2}
\end{align}
For now we postpone the proof that \eqref{p1} $\sim$ \eqref{p2} and
proceed, instead, with analyzing \eqref{p2}. We
can rewrite \eqref{p2} as
\begin{align*}
  \sum_{t=-J_n-q+1}^{n+J_n} \!\!\!\!\! r_n \E\!\Big[& \Big(1\! - \!
  \exp\Big\{\!\!-\!\!\Big[\sum_{k=1}^{n} g_{1}\big(\frac{k}{n}, \!
    R_{k,n}\!+ \! \frac{1}{\gamma_n}  A_{k,k-t}
    Z_{t}\ind\{|k\!- \! t|\!\leq \!J_n\},\dots, \\ \qquad & R_{k-q,n}\!+ \!
    \frac{1}{\gamma_n}  A_{k-q,k-q-t}
    Z_{t}\ind\{|k\!-q- \! t|\!\leq \!J_n\}\big)\!-\!\vep_1\Big]_+\Big\}\Big)\\
  \times &\Big(1\!-\!\exp\Big\{\!\!-\!\!\Big[\sum_{k=1}^n
    g_{2}\big(\frac{k}{n}, \! R_{k,n} \! + \!
    \frac{1}{\gamma_n}  A_{k, k-t}
    Z_{t}\ind\{|k\! - \! t|\!\leq\! J_n\},\dots, \\ & \qquad R_{k-q,n}\!+ \!
    \frac{1}{\gamma_n}  A_{k-q,k-q-t}
    Z_{t}\ind\{|k\!-q- \! t|\!\leq \!J_n\}\big)\!-\!\vep_2\Big]_+\Big\}\Big)\\
  \times & \ind\{|Z_t|> \tau \gamma_n\}\Big]
\end{align*}
In the sequel, as the subscripts change, we will write $R_n$ instead
of the proper $R_{k,n}$ corresponding to the appropriate
subscripts. We will not impose any assumtions on these random
variables apart from the fact that $|R_n|\leq \beta_n$ for all
$n$. With $l=k-t$ we can rewrite the above expression as
\begin{align*}
  \sum_{t=-J_n-q+1}^{n+J_n}\!\! r_n \E\!\Big[&\Big(1\! - \!
  \exp\Big\{\!\!-\!\!\Big[\sum_{l=1-t}^{n-t} \!
  g_{1}\Big(\frac{t+l}{n}, \, R_n\!+\!
  \frac{1}{\gamma_n} A_{t+l,l}Z_{t}I\{|l|\! \leq\!
  J_n\}, \dots,  \\ & \qquad R_n\!+\!
  \frac{1}{\gamma_n} A_{t+l-q,l-q}Z_{t}I\{|l-q|\! \leq\!
  J_n\}\Big)\!-\!\vep_1\Big]_+\Big\}\Big)\\
  \times& \Big(1\!-\!\exp\Big\{\!\!-\!\!\Big[\sum_{l=1-t}^{n-t} \!
  g_{2}\Big(\frac{t+l}{n}, \, R_n\!+\!
  \frac{1}{\gamma_n}A_{t+l, l}Z_{t}1\{|l| \!\leq\!
  J_n, \dots, \\ & \qquad  R_n\!+\!
  \frac{1}{\gamma_n} A_{t+l-q,l-q}Z_{t}I\{|l-q|\! \leq\!
  J_n\}\}\Big)\!-\!\vep_2\Big]_+\Big\}\Big)\\
  \times & \ind\{|Z_t|> \tau \gamma_n\}\Big] \,.
\end{align*}
By stationarity we may replace $A_{t+l-i,l}$ by $A_{l-i,l}$, $i = 0,\dots,q$,
and conditioning on $Z_t$ the above equals
\begin{align*}
  \int_{|z|>\tau}\,\!\!\sum_{t=-J_n-q+1}^{n+J_n}\!\!\!\! r_n
  \E\!\Big[&\Big(1\! - \!
  \exp\Big\{\!\!-\!\!\Big[\sum_{l=1-t}^{n-t} \!
  g_{1}\Big(\frac{t+l}{n}, \, R_n\!+\!
  A_{l,l} z I\{|l| \!\leq\! J_n\},  \dots, \\ & \qquad R_n\!+\!
  A_{l-q,l-q} z I\{|l-q| \!\leq\! J_n\} \Big)\!-\!\vep_1\Big]_+\Big\}\Big)\\
  \times & \Big(1\!- \!
  \exp\Big\{\!\!-\!\!\Big[\sum_{l=1-t}^{n-t}\!
  g_{2}\Big(\frac{t+l}{n}, \, R_n\!+\!
  A_{l, l} z I\{|l| \!\leq\! J_n\},\dots, \\ & \qquad R_n\!+\!
  A_{l-q,l-q} z I\{|l-q| \!\leq\! J_n\}\Big)\!-\!\vep_2\Big]_+\Big\}\Big)\Big]\\
  \times & \Prob(\gamma_n^{-1}Z_t \in dz)\\
  & \hspace{-2cm}=: \int_{|z|>\tau} \kappa_n(z)\, r_n n
  \Prob(\gamma_n^{-1}Z \in dz) \\
  &\hspace{-2cm} =:  \int_{|z|>\tau} \kappa_n(z)\, \mu_n(dz) \sim
  \int_{|z|>\tau} \tilde \kappa_n(z)\, \mu_n(dz)\,,
\end{align*}
where
\begin{align*}
  &\tilde \kappa_n(z) = \sum_{t=-J_n+1}^{n+J_n}\!\!\!\!\E\!\Big[\Big(1\! - \!
  \exp\Big\{\!-\Big[\sum_{l=1-t}^{n-t} \!
  g_{1}\Big(\frac{t+l}{n}, A_{l,l}zI\{|l|\leq
  J_n\},\dots, \\ & \qquad
  A_{l-q,l-q} z I\{|l-q| \!\leq\! J_n\}\}\Big)-\vep_1\Big]_+\Big\}\Big)\\
  &\qquad \times \Big(1-\exp\Big\{-\Big[\sum_{l=1-t}^{n-t}
  g_{2}\Big(\frac{t+l}{n}, A_{l, l}
  z1\{|l|\leq J_n\},\dots, \\ & \qquad
  A_{l-q,l-q} z I\{|l-q| \!\leq\!
  J_n\}\}\Big)-\vep_2\Big]_+\Big\}\Big)\Big]\frac{1}{n}\,,
\end{align*}
and we have used the uniform continuity of
the functions $g_1$ and $g_2$ and the fact that $|R_n| \leq \beta_n
\downarrow 0$. We claim
that, as $n\to\infty$,
\begin{equation} \label{e:conv.kapp}
\int_{|z|>\tau} \tilde \kappa_n(z)\, \mu_n(dz) \to
\int_{|z|>\tau}  \kappa(z)\, \mu(dz)\,,
\end{equation}
where
    \begin{align*}
      \kappa(z) & = \int_0^1 \E\!\Big[\Big(1\! - \!
      \exp\Big\{\!-\Big[\sum_{l\in\Z} \!
      g_{1}\Big(t, A_{l,l}
      z,\dots, A_{l-q,l-q}z\Big)-\vep_1\Big]_+\Big\}\Big)\\
      &\quad \times \Big(1-\exp\Big\{-\Big[\sum_{l \in \Z}
      g_{2}\Big(t, A_{l, l}
      z,\dots, A_{l-q,l-q}z\Big)-\vep_2\Big]_+\Big\}\Big)\Big]dt.
    \end{align*}
Note, first of all, that $\mu_n \to \mu$ in $\mathbf{M}_0(\R^d)$. Since the
functions $(\tilde \kappa_n)$ and $\kappa$ are uniformly bounded, it
is enough to prove the convergence in \eqref{e:conv.kapp} when
integrating over the set $\{\tau<|z|<M\}$ for any finite
$M>\tau$. Using the fact $J_n/n\to 0$ one needs to check
that for any $K$
\begin{equation} \label{e:conv.kapp1}
\int_{\tau<|z|<M}  \kappa_n^{(K)}(z)\, \mu_n(dz) \to
\int_{\tau<|z|<M}  \kappa(z)\, \mu(dz)\,,
\end{equation}
with
    \begin{align*}
      \kappa_n^{(K)}(z) = \sum_{t=-K+1}^{n+K}\!\!\!\!\E\!\Big[\Big(1\! - \!
      \exp\Big\{\!-\Big[\sum_{l=1-t}^{n-t} \!
      g_{1}(& \frac{t+l}{n}, A_{l,l} z I\{|l|\leq
      J_n\},\dots, \\
      &A_{l-q,l-q} z I\{|l-q|\leq
      J_n\})-\vep_1\Big]_+\Big\}\Big)\\
      \times \Big(1-\exp\Big\{-\Big[\sum_{l=1-t}^{n-t}
      g_{2}(& \frac{t+l}{n}, A_{l, l} zI\{|l|\leq
      J_n\},\dots, \\
      &A_{l-q,l-q} z I\{|l-q|\leq
      J_n\})-\vep_2\Big]_+\Big\}\Big)\Big]\frac{1}{n}\,.
     \end{align*}
Recall that the supports of $g_1$ and $g_2$ does not intersect the set
$[0,1]\times \{(x_0,\dots,x_q): \max\{|x_0|,\dots,|x_q|\} < \delta\}$,
some $\delta > 0$.  The assumptions \eqref{cc1}-
\eqref{cc2} imply that
\begin{align}\label{p5}
\Prob(\|A_{l, l}\| \geq \delta/M \ \ \text{for some $|l|\geq K$})
\to 0 \ \ \text{as $K\to\infty$.}
\end{align}
Since the limit in \eqref{e:conv.kapp1} does not depend on $K$, one
may replace $\kappa_n^{(K)}$ in it (but still using the same notation) with
    \begin{align*}
      \kappa_n^{(K)}(z) = \sum_{t=-K+1}^{n+K}\!\!\!\!\E\!\Big[& \Big(1\! - \!
      \exp\Big\{\!-\Big[\sum_{l\in\Z} \!
      g_{1}(\frac{t+l}{n}, A_{l,l} z,\dots,A_{l-q,l-q}
      z)-\vep_1\Big]_+\Big\}\Big)\\
      \times & \Big(1-\exp\Big\{-\Big[\sum_{l\in\Z}
      g_{2}(\frac{t+l}{n}, A_{l, l}
      z,\dots,A_{l-q,l-q} z)-\vep_2\Big]_+\Big\}\Big)\Big]\frac{1}{n}\,.
     \end{align*}
However, $\kappa_n^{(K)}\to \kappa$ uniformly (in $z$). Therefore,
\eqref{e:conv.kapp1} follows, e.g.  by
\citet[][Theorem 5.5]{B68}.  Having now established
\eqref{e:conv.kapp}, we let $\tau \to 0$ to obtain
    \begin{align*}
      \int_{|z|>\tau} \kappa(z) \mu(dz) \to \E[(\Leb
      \times \mu)\circ
      T_{\Abold}^{-1}(F_{g_1,g_2,\vep_1,\vep_2})],
    \end{align*}
    as required.

    It remains only to prove the asymptotic equivalence \eqref{p1}
    $\sim$ \eqref{p2}. Denote
    $C_n = \{ -J_n-q+1, \ldots, n+J_n\}$. Substracting \eqref{p2} from
    \eqref{p1} yields
    \begin{align}
     & \sum_{t=-J_n-q+1}^{n+J_n} r_n\Big(\E[\Theta_n
      \ind\{|Z_t|>\tau \gamma_n, |Z_s|\leq \tau\gamma_n, \ \text{all
      $s\in C_n, \, s\neq t$}\}] \nonumber \\
      &\hskip 1in- \E[\Theta'_n
      \ind\{|Z_t|>\tau \gamma_n\}]\Big)\nonumber \\
      &= \sum_{t=-J_n-q+1}^{n+J_n} r_n\E[(\Theta_n-\Theta'_n)
      \ind\{|Z_t|>\tau \gamma_n, |Z_s|\leq \tau\gamma_n\ \text{all
      $s\in C_n, \, s\neq t$}\}] \label{q1} \\
      & +
       \sum_{t=-J_n-q+1}^{n+J_n} r_n \E[\Theta'_n \ind\{|Z_t|>\tau
       \gamma_n\}(1-\ind\{|Z_s|\leq \tau \gamma_n, \ \text{all
      $s\in C_n, \, s\neq t$}\})]\Big). \label{q2}
    \end{align}
    Since $\Theta'_n \leq 1$,  we can bound \eqref{q2} by
    \begin{align*}
      \sum_{t=-J_n-q+1}^{n+J_n} r_n \Prob(|Z|>\tau \gamma_n) \left(
        1-\Bigl( 1-\Prob(|Z|>\tau \gamma_n)\Bigr)^{n+q+2J_n}\right)\to 0,
    \end{align*}
    as $n\to\infty$ by the choice of $r_n$ and the fact that
    $J_n/n\to 0$.  To handle \eqref{q1} we use Lemma
    \ref{l:major.bound}. This completes the proof.
\hfill{$\square$}

\begin{lem}\label{lem1}
  For the stationary process $(X_k)_{k\in\Z}$ in \eqref{x} we have,
  under the   assumptions \eqref{cc1} - \eqref{cc2} and \eqref{e:Jn},
  \begin{align*}
     \lim_{n\to \infty} r_n\Prob\Big(\max_{1\leq k \leq n} \Big|\sum_{|j|>J_n}
     A_{k,j}Z_{k-j}\Big|> \gamma_n \vep\Big) =
     0
   \end{align*}
for any $\vep>0$.
\end{lem}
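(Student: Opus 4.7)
The plan is to bound the maximum by a union bound, leverage stationarity to reduce to the tail of a single truncated remainder, and then apply the joint tail estimate \eqref{e:joint.lim} recalled in Remark \ref{rk:joint.lim}.

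By the assumed stationarity of $(\Abold_k)$ and its independence from the i.i.d.\ sequence $(Z_j)$, the distribution of the remainder $\sum_{|j|>J_n} A_{k,j}Z_{k-j}$ does not depend on $k$. A union bound therefore yields
\begin{align*}
r_n \Prob\Big(\max_{1\leq k\leq n}\Big|\sum_{|j|>J_n} A_{k,j} Z_{k-j}\Big| > \gamma_n \vep\Big)
&\leq r_n\, n\, \Prob\Big(\Big|\sum_{|j|>J_n} A_{0,j} Z_{-j}\Big| > \gamma_n \vep\Big) \\
&= \frac{\Prob(|\sum_{|j|>J_n} A_{0,j} Z_{-j}| > \gamma_n \vep)}{\Prob(|Z|>\gamma_n)},
\end{align*}
after using $r_n = 1/(n\Prob(|Z|>\gamma_n))$. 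By the regular variation of $|Z|$, we have $\Prob(|Z|>\gamma_n)/\Prob(|Z|>\gamma_n\vep) \to \vep^{\alpha}$, a finite positive constant, so it suffices to establish
\begin{align*}
\frac{\Prob(|\sum_{|j|>J_n} A_{0,j} Z_{-j}| > \gamma_n \vep)}{\Prob(|Z|>\gamma_n \vep)} \to 0.
\end{align*}

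This is precisely \eqref{e:joint.lim} evaluated at the diverging level $x_n := \gamma_n \vep$ with truncation index $J_n$. The only technical nuance is that \eqref{e:joint.lim} is stated for increasing truncations $n(x) \uparrow \infty$, while \eqref{e:Jn} only guarantees $J_n \to \infty$. We would handle this by a subsequence argument: if the convergence failed, extract a subsequence $n_k$ along which the ratio stays bounded below by some $\delta > 0$, pass to a further subsequence $n_{k_j}$ along which both $\gamma_{n_{k_j}}\vep$ and $J_{n_{k_j}}$ are strictly increasing (possible since each diverges), and define a piecewise-constant, increasing truncation $n(x)$ equal to $J_{n_{k_j}}$ on $[\gamma_{n_{k_j}}\vep, \gamma_{n_{k_{j+1}}}\vep)$. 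Along this truncation, \eqref{e:joint.lim} yields the desired contradiction. We do not expect a serious obstacle here, since the essential content — the asymptotic negligibility of the truncated tail relative to $\Prob(|Z|>x)$ — is already packaged in Remark \ref{rk:joint.lim}.
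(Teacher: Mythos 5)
Your proof is correct and follows essentially the same route as the paper's: stationarity plus a union bound reduce the maximum to $n$ copies of a single truncated remainder, and the conclusion then follows from Remark \ref{rk:joint.lim} (i.e.\ \eqref{e:joint.lim}) together with the definition of $r_n$ and the regular variation of $|Z|$. Your subsequence construction to reconcile the non-monotone $J_n$ with the increasing truncation $n(x)\uparrow\infty$ required in \eqref{e:joint.lim} is a careful touch that the paper simply elides, but it does not change the argument in any essential way.
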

\begin{proof}
By stationarity we have
   \begin{align*}
     &r_n\Prob\Big(\max_{1\leq k \leq n} \Big|\sum_{|j|>J_n}
     A_{k,j}Z_{k-j}\Big|> \gamma_n \vep\Big)
     \quad \leq r_n n \Prob\Big(\Big|\sum_{|j|>J_n}
     A_{k,j}Z_{k-j}\Big|> \gamma_n \vep\Big).
   \end{align*}
   Using Remark \ref{rk:joint.lim} and the definition of $r_n$,
   we see that the above    expression is   bounded from above by
   $$
   o(1)\, n\, r_n \Prob(|Z|>\gamma_n \vep) \to 0
 $$
 as $n\to\infty$.
\end{proof}

\begin{lem}\label{l:major.bound}
Let $\tilde \Delta_n$ be the sum in \eqref{q1}. Then
$\lim_{\tau \to 0}\limsup_{n\to\infty}\tilde\Delta_n =  0$.
\end{lem}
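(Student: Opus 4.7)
The plan is to estimate $|\Theta_n-\Theta_n'|$ pointwise on the event in the indicator of \eqref{q1}, and integrate against the probability of that event. Since each map $y\mapsto 1-e^{-[y-\vep_i]_+}$ is $1$-Lipschitz and bounded by $1$,
$$
|\Theta_n-\Theta_n'|\leq \sum_{\ell=1}^2\sum_{k=1}^n|g_\ell(k/n,x^{k})-g_\ell(k/n,x'^{k})|,
$$
where $x^{k,i}$ and $x'^{k,i}$ are the $i$-th arguments at position $k$ in $\Theta_n$ and $\Theta_n'$, respectively. On the event in question, $x^{k,i}-x'^{k,i}$ involves only the $Z_s$ with $s\neq t$, all bounded by $\tau\gamma_n$, so $|x^{k,i}-x'^{k,i}|\leq \tau\sum_{|j|\leq J_n}\|A_{k-i,j}\|$. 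The compact support of $g_\ell$ (contained in $[0,1]\times\{\max_i|x_i|\geq\delta\}$, as established just before \eqref{t1}) forces the activity of each summand into one of two cases: (A) $\max_i|x'^{k,i}|\geq\delta/2$, i.e.\ for large $n$, $\max_{0\leq i\leq q}\|A_{k-i,k-i-t}\|\cdot|Z_t|\geq c\gamma_n$; or (B) only the perturbation $x^k-x'^k$ realizes the support condition.

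\textbf{Case (A).} Interpolating the Lipschitz bound $L\tau Y_k$ (with $Y_k:=\max_i\sum_{|j|\leq J_n}\|A_{k-i,j}\|$) against the uniform bound $2\|g_\ell\|_\infty$ via $\min(a,b)\leq a^pb^{1-p}$ gives $|g_\ell(k/n,x^k)-g_\ell(k/n,x'^k)|\leq C(\tau Y_k)^p$ in this case, for any $p\in(0,1)$. By independence of $(A_{k,j})$ and $(Z_k)$ and a Potter-type bound from regular variation, integrating out $Z_t$ (restricted by the activity condition and by $|Z_t|>\tau\gamma_n$) yields the factor
$$
C\min\bigl((\max_i\|A_{k-i,k-i-t}\|)^{\alpha-\vep_0},\tau^{-\alpha+\vep_0}\bigr)\Prob(|Z|>\gamma_n).
$$
The $\Prob(|Z|>\gamma_n)$ cancels with $r_n^{-1}=n\Prob(|Z|>\gamma_n)$; the $k$-sum provides the remaining factor of $n$ by stationarity of $(\Abold_k)$; and the $t$-sum, after reindexing $j=-i-t$, collapses to a sum over $j\in\Z$. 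The end bound is of the form $C\tau^p\, E\bigl[Y_0^p\sum_{j\in\Z}\|A_{0,j}\|^{\alpha-\vep_0}\bigr]$, which is finite for suitable $p,\vep_0$ under \eqref{cc1}--\eqref{cc2} (via H\"older's inequality) and tends to $0$ as $\tau\to 0$.

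\textbf{Case (B) and the main obstacle.} In case (B), bound $|g_\ell(k/n,x^k)-g_\ell(k/n,x'^k)|\leq 2\|g_\ell\|_\infty$, and note that the activity at $(k,i)$ forces $|\gamma_n^{-1}\sum_{|j|\leq J_n,\,j\neq k-i-t}A_{k-i,j}Z_{k-i-j}|\geq \delta/2$. Intersecting with the event in the indicator and using independence, the probability of this at a single $(k,i)$ is bounded by $\Prob\bigl(|\gamma_n^{-1}\sum_{|j|\leq J_n}A_{0,j}Z_{-j}\ind\{|Z_{-j}|\leq\tau\gamma_n\}|\geq\delta/4\bigr)\cdot\Prob(|Z|>\tau\gamma_n)$. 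By \eqref{e:only.large}, the first factor is $o(\Prob(|Z|>\gamma_n))$ as $n\to\infty$ and then $\tau\to 0$. Summing over $(k,i,t)$, multiplying by $r_n$, and using $n\Prob(|Z|>\gamma_n)=1/r_n\to 0$ yields $\limsup_n(\text{case (B) contribution})=0$ for each fixed $\tau$, as required. The main technical obstacle is case (A): the interpolation exponents $p,\vep_0$ must be chosen so that $E[Y_0^p\sum_j\|A_{0,j}\|^{\alpha-\vep_0}]$ is finite in each of the regimes $\alpha\in(0,1)\cup(1,2)$, $\alpha\in\{1,2\}$, and $\alpha>2$; for $\alpha>1$, moments of $\sum_j\|A_{0,j}\|$ greater than one are needed, which requires the stronger conditions \eqref{cc1.5}/\eqref{cc2} rather than \eqref{cc1} alone.
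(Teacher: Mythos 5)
Your Case (B) is essentially the paper's treatment of the ``far'' term and the counting there goes through, but Case (A) has a genuine gap. The bound $|x^{k,i}-x'^{k,i}|\leq\tau\sum_{|j|\leq J_n}\|A_{k-i,j}\|$ discards all cancellation in $\gamma_n^{-1}\sum_{j}A_{k-i,j}Z_{k-i-j}\ind\{|Z_{k-i-j}|\leq\tau\gamma_n\}$, and the quantity $Y_k=\max_i\sum_{|j|\leq J_n}\|A_{k-i,j}\|$ grows with $n$ through $J_n$. Under the standing hypotheses \eqref{cc1}--\eqref{cc2} the limit $\sum_{j\in\Z}\|A_{0,j}\|$ need not even be finite: for $\alpha\in(1,2)$ with $\alpha-\vep>1$, for $\alpha=2$, and for $\alpha>2$, deterministic coefficients such as $\|A_j\|=|j|^{-\theta}$ with $\theta<1$ but $\theta(\alpha-\vep)>1$ (resp.\ $2\theta>1$) satisfy the assumptions while $\sum_j\|A_j\|=\infty$. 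Then $Y_k\to\infty$, the interpolation $\min\{L\tau Y_k,2\|g_\ell\|_\infty\}\leq C(\tau Y_k)^p$ degenerates to the constant bound, and your Case (A) contribution is $O(1)$ as $\tau\to 0$, not $o(1)$; the claimed finiteness of $\E\bigl[Y_0^p\sum_j\|A_{0,j}\|^{\alpha-\vep_0}\bigr]$ ``via H\"older'' is not available -- even for $\alpha<1$, \eqref{cc1} only controls the marginal sums $\sum_j\E\|A_j\|^{\alpha\pm\vep}$, not product moments across different coordinates, and certainly not uniformly in $J_n$. Your closing remark that for $\alpha>1$ one should invoke \eqref{cc1.5}/\eqref{cc2} instead of \eqref{cc1} does not repair this: for $\alpha\in(1,2)$ the lemma must be proved under \eqref{cc1} alone, since that is all Theorem \ref{mt} assumes there.

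Compare with the paper's route, which is designed precisely to avoid any moment of $\sum_j\|A_j\|$: after reducing to $d=p=1$ and a single $g$, it splits, for each $t$, the $k$-sum into the window $K_t=\{t-J_n,\dots,t+q+J_n\}$ and its complement. Off $K_t$ only the support condition matters (your Case (B)); on $K_t$ there are only $O(J_n)$ terms, and the Lipschitz bound is applied to the intact truncated sum, whose first moment is estimated probabilistically: for $\alpha<1$ by Karamata's theorem, for $\alpha=1$ via the slowly varying $l(x)$, and for $\alpha>1$ by centering at $\mu_n=\E\bigl(Z\ind\{|Z|\leq\tau\gamma_n\}\bigr)$ (using $\E Z=0$) together with a Burkholder--Davis--Gundy estimate, after which the surviving factors of $J_n$ are absorbed by the growth condition \eqref{e:Jn} ($J_n=o(\gamma_n)$, resp.\ $o(\gamma_n/l(\gamma_n))$ at $\alpha=1$). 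Your proposal uses neither $\E Z=0$ nor \eqref{e:Jn}, and for $\alpha\geq 1$ the smallness of the perturbation is a cancellation effect rather than an $\ell^1$ effect, so these ingredients cannot be dispensed with.
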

\begin{proof}
Note that by taking norms it is enough to consider the one dimensional case
$d=p=1$. Furthermore, it is clearly enough to consider a single
function $g$ and $\vep>0$ and prove that
\begin{equation} \label{Delta.alt}
\lim_{\tau\to 0}\limsup_{n\to\infty}r_n\, \Delta_n = 0 \,,
\end{equation}
where
\begin{align*}
  \Delta_n = \sum_{t\in C_n} E\Biggl[ \exp\Big\{& -\Big[\sum_{k=1}^n
  g\Big(\frac{k}{n}, R_{k,n}+\frac{1}{\gamma_n}\sum_{|j|\leq J_n} A_{k, j}
  Z_{k-j}, \dots, \\ & 
  R_{k-q,n}+\frac{1}{\gamma_n}\sum_{|j|\leq J_n} A_{k-q, j}
  Z_{k-q-j}\Big)-\vep\Big]_+\Big\}\\
   -\exp\Big\{& -\Big[\sum_{k=1}^n
  g\Big(\frac{k}{n},  R_{k,n}+ \frac{1}{\gamma_n}\sum_{|j|\leq J_n} A_{k, j}
  Z_{k-j}\ind\{t=k-j\},\dots, \\ & 
  R_{k-q,n}+\frac{1}{\gamma_n}\sum_{|j|\leq J_n} A_{k-q, j}
  Z_{k-q-j}\ind\{t=k-q-j\} \Big)-\vep\Big]_+\Big\}\Big]
  \\
  \times \ind\, \{ |Z_t| & >\tau 
  \gamma_n,  |Z_s|\leq \tau\gamma_n\ \text{all  $s\in
    C_n, \, s\neq t$}\}\Biggr]\,,
\end{align*}
where, as above, $C_n = \{ -J_n-q+1, \ldots, n+J_n\}$.
Let $L$ be the Lipschitz constant of $g$ with
respect to the metric on $\Eb^q$ given by
\begin{align*}
d((s,x_0,\dots,x_q),(t,y_0,\dots,y_q)) =
|s-t| + \min\{1, |x_0-y_0|+ \dots +|x_q-y_q|\}\}.
\end{align*}
 Notice that, in the obvious notation,
\begin{align}
|\Delta_n|& \leq L\E\sum_{k=1}^n \min \Big\{
1, \sum_{i=0}^q \frac{1}{\gamma_n} \Big| \sum_{|j|\leq J_n} \!\! A_{k-i, j}
          Z_{k-i-j}\ind\{ |Z_{k-i-j}|\leq \tau\gamma_n\}\Big|\Big\} \nonumber \\
&\leq L\,n(q+1)  \E \min \Bigl[
1, \frac{1}{\gamma_n} \Big| \sum_{|j|\leq J_n} \!\! A_{0,j}
          Z_{-j}\ind\{ |Z_{-j}|\leq
          \tau\gamma_n\}\Big|\Bigr]  \label{e:Delta.n.bound}\\
&= L\,n(q+1) \int_0^1 P\Big( \Big| \sum_{|j|\leq J_n} \!\! A_{0,j}
          Z_{-j}\ind\{ |Z_{-j}|\leq \tau\gamma_n\}\Big|>x\gamma_n\Big)\,
      dx\,. \nonumber
\end{align}
Suppose first that $0<\alpha<1$. We have
by \eqref{e:only.large}, as $\tau\downarrow 0$,
\begin{align*}
r_n \Delta_n  \leq o(1) \frac{nr_n}{\gamma_n}
\int_0^{\gamma_n}  P(|Z|> x)\, dx = o(1)nr_n\, P(|Z|> \gamma_n) \to 0
\end{align*}
by Karamata's theorem, and \eqref{Delta.alt} follows.

Consider now the case $\alpha\geq 1$. We abbreviate
\begin{align*}
\Delta_n := E(D_n)= \sum_{t=-J_n-q+1}^{n+J_n} E\bigl(
D_n\ind(B_t)\bigr)\,,
\end{align*}
where
\begin{align*}
B_t = \Bigl\{ |Z_t|>\tau \gamma_n, |Z_s|\leq \tau\gamma_n\ \text{all
  $s\in C_n, \, s\neq t$}\Bigr\}\,.
\end{align*}
Since $g$ has a compact support, there is $\delta>0$ such
that $g(s, x_0,\dots,x_q)=0$ for all $s \in [0,1]$ and
$\{(x_0,\dots,x_q): |x_0|+\dots+|x_q| < \delta\}$. Let $t\in\{ -J_n-q+1,
\ldots ,n+J_n\}$. We have on the event $B_t$,
\begin{align*}
  |D_n|I\{B_t\} \leq
  \Big|\sum_{k=1}^n g\Big(& \frac{k}{n},
  R_{k,n}+\frac{1}{\gamma_n}\sum_{|j|\leq J_n} A_{k, j} Z_{k-j}, \dots, \\ &
  R_{k-q,n}+\frac{1}{\gamma_n}\sum_{|j|\leq J_n} A_{k-q, j}
  Z_{k-q-j}\Big)\\
  -\sum_{k=1}^n g\Big(& \frac{k}{n}, R_{k,n}+
  \frac{1}{\gamma_n}\sum_{|j|\leq J_n} A_{k, j}
  Z_{k-j}\ind\{t=k-j\},\dots, \\ & 
  R_{k-q,n}+\frac{1}{\gamma_n}\sum_{|j|\leq J_n} A_{k-q, j}
  Z_{k-q-j}\ind\{t=k-q-j\}\Big) \Big|I\{B_t\}.
\end{align*}
Let $K_t = \{k : t-J_n \leq k \leq t+q+J_n\}$ and decompose the last
expression into the sum over $K_t$ and $\{1,\dots,n\}\setminus
K_t$. Then, on the event $B_t$, $|\Delta_n|$  is bounded above by
\begin{align}
&  L\, \sum_{k \in K_t}  \min \Bigl\{
  1, \sum_{i=0}^q\frac{1}{\gamma_n} \Big| \sum_{|j|\leq J_n} \!\! A_{k-i, j}
  Z_{k-i-j}\ind\{ |Z_{k-i-j}|\leq \tau\gamma_n\}I\{j \neq
  k-i-t\} \Big|\Bigr\}
  \nonumber \\
  &
  + \|g\|_\infty\, \sum_{k\notin K_t}\, \ind\left\{\sum_{i=0}^q \Bigl(
    \bigl| R_{k-i,n}\bigr| +
  \frac{1}{\gamma_n} \Big| \sum_{|j|\leq J_n} \!\! A_{k-i, j}
  Z_{k-i-j}\ind\{ |Z_{k-i-j}|\leq \tau\gamma_n\}\Big|\Bigr) > \delta\right\}
  \nonumber \\
  & := D_{n,1} + D_{n,2}\,.\label{e:Dn12}
\end{align}

We start with $D_{n,2}$. Recall that for all $i$,  $|R_{k-i,n}|$ is
bounded by $\beta_n \downarrow 0$. Using in the sequel the letter $C$
for a finite positive constant that may change from time to time, we
see that for large $n$,
\begin{align*}
&r_n \sum_{t=-J_n-q+1}^{n+J_n} E\bigl( D_{n,2}\ind(B_t)\bigr)
\leq C\, r_n(n+q+2J_n) n P(|Z|>\tau\gamma_n) \\
& \quad \times P\Big( \sum_{i=0}^q \Big| \sum_{|j|\leq J_n} \!\! A_{k-i,j}
          Z_{k-i-j}\ind\{ |Z_{k-i-j}|\leq
      \tau\gamma_n\}\Big|>\delta\gamma_n/2\Big)\\
&\leq C\, \tau^{-\alpha}n\, (q+1) P\Big( \Big| \sum_{|j|\leq J_n} \!\! A_{k,j}
          Z_{k,j}\ind\{ |Z_{k,j}|\leq
      \tau\gamma_n\}\Big|>\frac{\delta\gamma_n}{2(q+1)}\Big)\,.
\end{align*}
Using \eqref{e:only.large} shows that for small $\tau>0$ this is
further bounded by
$$
C\, n\,P(|Z|>\gamma_n) \to 0 \quad \text{as $n\to\infty$}
$$
 by the choice of $\gamma_n$.

It remains to consider the $D_{n,1}$ term in \eqref{e:Dn12}.
Note that in $D_{n,1}$, for each $t$, $k$ is
restricted to at most $2J_n+q+1$ possible values. We have
\begin{align}
&r_n \sum_{t=-J_n-q+1}^{n+J_n} E\bigl( D_{n,1}\ind(B_t)\bigr) \nonumber \\
&\quad  \leq C\, r_n(n+q+2J_n)P(|Z|>\tau\gamma_n)  \nonumber \\
&\quad  J_n\, \E \min \Bigl[
1, \sum_{i=0}^q\frac{1}{\gamma_n} \Bigl| \sum_{|j|\leq J_n} \!\! A_{k-i,j}
          Z_{k-i-j}\ind\{ |Z_{k-i-j}|\leq \tau\gamma_n\}\Bigr|\Bigr]
        \nonumber \\
& \quad \leq C(q+1)
\frac{J_n}{\gamma_n}\frac{P(|Z|>\tau\gamma_n)}{P(|Z|>\gamma_n)}
\E \Bigl| \sum_{|j|\leq J_n} \!\! A_{0,j}
          Z_{-j}\ind\{ |Z_{-j}|\leq \tau\gamma_n\}\Bigr|\,.\label{e:D.n1}
\end{align}

Suppose first that $\alpha=1$. For large $n$
the last expression can be bounded by
$$
C\, \tau^{-1}\frac{J_n}{\gamma_n}
E\bigl( |Z| \ind\{ |Z| \leq  \tau\gamma_n\}\bigr)\E \sum_{|j|\leq
  J_n} \!\! |A_{j}| \,.
$$
Note that $\E \sum_{|j|\leq   J_n} |A_{j}|$ stays
bounded by \eqref{cc1}. Furthermore, the function
$l(x) = E\bigl( |Z|\ind\{ |Z|\leq x\}\bigr)$ is slowly varying. Therefore,
the above expression vanishes as $n\to\infty$ by \eqref{e:Jn}.

Next consider the case $\alpha>1$. Let $\mu_n= E\bigl( Z\ind\{ |Z|\leq
\tau\gamma_n\}\bigr)$. Note that
\begin{align*}
\E\Big|\!\sum_{|j|\leq J_n} \!\! A_{j}
          Z_{j}\ind\{ |Z_{j}|\!\leq\! \tau\gamma_n\}\Big|
& \leq \E \Big|\! \sum_{|j|\leq J_n} \!\! A_{j}
          \bigl( Z_{j}\ind\{ |Z_{j}|\leq \tau\gamma_n\}-\mu_n\bigr)\Big|
\!+\! |\mu_n|\E \Big | \!\sum_{|j|\leq J_n} \!\! A_{j} \Big |\\
& =: I + II.
\end{align*}
Let us start with $II$. Since $EZ=0$, we see that, as $n\to\infty$,
$$
|\mu_n| \leq E\bigl( |Z| \ind\{ |Z|> \tau\gamma_n\}\bigr)
\sim C\, \tau\, \gamma_n\, P(|Z|>\tau\gamma_n)\,.
$$
Furthermore, to deal with $\sum_{|j|\leq J_n} \!\! A_{j}$, we use the
assumptions \eqref{cc1} -- \eqref{cc2}. Suppose, for example, that
$1<\alpha \leq 2$. Choose $\vep$ small enough so that $\alpha-\vep>1$,
and notice that
$$
\E \bigl| \sum_{|j|\leq J_n} \!\! A_{j}\bigr|
\leq C\, J_n^{1-(\alpha-\vep)^{-1}}
\E  \left( \sum_{|j|\leq J_n} \!\!
\bigl|A_{j}\bigr|^{\alpha-\vep}\right)^{1/(\alpha-\vep)}
\leq C\, J_n^{1-(\alpha-\vep)^{-1}}\,,
$$
and so the corresponding term in \eqref{e:D.n1}
is bounded, for large $n$,  by
$$
C\, \tau^{1-2\alpha}J_n^{2-(\alpha-\vep)^{-1}}P(|Z|>\gamma_n)\,.
$$
Note that for $\vep$ small enough, $\theta:=
2-(\alpha-\vep)^{-1}<\alpha$. In that case the above expression is
$o(J_n/\gamma_n)\to 0$
as $n\to\infty$ by \eqref{e:Jn}. Similarly, in the case $\alpha>2$
this term goes to zero as well.

For $I$, by the Burkholder-Davis-Gundy  inequality,
\begin{align*}
&\E\Big|\!\sum_{|j|\leq J_n} \!\! A_{j} \big(\! Z_{j}\ind\{
|Z_{j}| \!\leq\! \tau\gamma_n\!\}\!-\!\mu_n\!\big)\Big|
\leq C E_A\Big[E_Z \Big( \!\sum_{|j|\leq J_n} \!\! A_{j}^2 \big(
  Z_{j}\ind\{ |Z_{j}| \!\leq\!
  \tau\gamma_n\}\!-\!\mu_n\big)^2\Big)^{\frac12}\Big].
\end{align*}
We use, once again, the assumptions \eqref{cc1}--\eqref{cc2}. Assuming
again that $1<\alpha\leq 2$, and choosing $\vep$
as above, we see that the above expression is bounded by
\begin{align*}
& C\, E_A\Big[ E_Z \Big( \sum_{|j|\leq J_n} \!\! |A_{j}|^{\alpha-\vep}
  \big|
  Z_{j}\ind\{ |Z_{j}|\leq
  \tau\gamma_n\}-\mu_n\big|^{\alpha-\vep}\Big)^{1/(\alpha-\vep)}\Big] \\
& \leq C\, \Big( E\big| Z\ind\{ |Z|\leq
  \tau\gamma_n\}-\mu_n\big|^{\alpha-\vep}\Big)^{1/(\alpha-\vep)}
E\Big( \sum_{|j|\leq J_n} \!\!
|A_{j}|^{\alpha-\vep}\Big)^{1/(\alpha-\vep)}\,,
\end{align*}
which is bounded, and so the corresponding term term in \eqref{e:D.n1}
converges to zero because $J_n/\gamma_n\to 0$ as
$n\to\infty$. The case $\alpha>2$ is entirely analogous (and
simpler).
This completes the proof of the lemma in all cases.
\end{proof}

\section{First Applications}

Theorem \ref{mt} provides a rather complete description of the
asymptotics of the probability of rare events for the sequence
$(X_k)$. In this section we provide some immediate applications of
this theorem. For the sake of simplicity and to avoid complicated formulas we
restrict attention to the case where
both $A_{k,j}$ and $Z_j$ are univariate and $A_{k,j} \geq 0$
a.s. Then $Z$ has a univariate regularly varying distribution and its
limiting measure can be written as in \eqref{mu}.

\subsection{Order statistics}
The first application is to order statistics.
 Let $X_{i:n}$ be the $i$th order statistic of $X_1, \dots, X_n$ in
 descending order. That is,
\begin{align*}
  X_{1:n} \geq X_{2:n} \geq \dots \geq X_{n:n}.
\end{align*}
Fix an integer $q \geq 1$ and consider the $q$-dimensional vector
$(X_{1:n}, \dots, X_{q:n})$ consisting of the $q$ largest values.
We denote by $A^*_i$ the $i$th order statistic of the sequence
$\{A_{j,j}, j\in \Z\}$ in descending order; under the assumptions \eqref{cc1} --
\eqref{cc2} this is a well defined random variable.  Note that for
$\infty > u_1 > u_2> \dots > u_q > 0$, we can write
\begin{align*}
  P(X_{1:n} > \gamma_n u_1, \dots, X_{q:n} > \gamma_n
      u_q) = P(N_n^0 \in B(u_1, \dots, u_q))
\end{align*}
with
\begin{align*}
    B = B(u_1, \dots, u_q) = \cap_{i=1}^q \{\xi: \xi([0,1]
    \times (u_i,\infty)) \geq i\}.
  \end{align*}
Then we have the following implication
of Theorem \ref{mt}.
\begin{cor}\label{os}
Let $d=p=1$, and assume that $A_{k,j} \geq 0$ for all $k,j$, and that
the hypotheses of Theorem \ref{mt} hold. Then
  \begin{align*}
    \frac{P((X_{1:n} > \gamma_n u_1, \dots, X_{q:n} > \gamma_n
      u_q))}{n\Prob(|Z|>\gamma_n)} \to w\, \E \min_{i=1,\ldots, q} \bigl(
A_i^*u_i^{-1}\bigr)^\alpha\,.
  \end{align*}
\end{cor}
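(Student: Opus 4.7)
The strategy is to recognize the probability on the left as $m_n^0(B)$ for an appropriate set $B$ in $\mathbf{N}_p^0$, apply Theorem \ref{mt} with $q=0$, and evaluate $m^0(B)$ explicitly.

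First, as noted in the text,
\begin{align*}
\{X_{1:n} > \gamma_n u_1,\ldots,X_{q:n} > \gamma_n u_q\} = \{N_n^0 \in B\}, \quad
B = \bigcap_{i=1}^q \{\xi : \xi([0,1]\times(u_i,\infty)) \geq i\},
\end{align*}
so the ratio in the statement is precisely $m_n^0(B) = r_n P(N_n^0 \in B)$. To pass from the $\mathbf{M}_0$-convergence $m_n^0 \to m^0$ of Theorem \ref{mt} to convergence evaluated on $B$, I need (i) $B$ bounded away from $\xi_0$ in $(\mathbf{N}_p^0,d)$, and (ii) $m^0(\partial B) = 0$. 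For (i), any $\xi \in B$ has at least $q \geq 1$ points with second coordinate exceeding $u_q > 0$, so $\xi$ stays at a positive distance from $\xi_0$. For (ii), the boundary $\partial B$ is contained in $\bigcup_{i=1}^q \{\xi : \xi([0,1] \times \{u_i\}) > 0\}$, and Remark \ref{rk:cts.on.sphere} (applied with $a = u_i$) shows $m^0$ places no mass on each of these sets; hence $m^0(\partial B) = 0$.

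Next I would compute $m^0(B) = E[(\Leb \times \mu)\circ T_{\mathbf{A},0}^{-1}(B)]$ directly. Recall $T_{\mathbf{A},0}(t,z) = \sum_{j \in \Z} \delta_{(t, A_{j,j}z)}$. Since all $A_{j,j} \geq 0$ and all $u_i > 0$, only $z > 0$ contributes, and the contribution comes through the $w$-weighted part of $\mu$ in \eqref{mu}. For fixed $\mathbf{A}$ and $z > 0$, the measure $T_{\mathbf{A},0}(t,z)$ lies in $B$ iff $\#\{j : A_{j,j} > u_i/z\} \geq i$ for every $i=1,\ldots,q$, which in terms of the order statistics $A^*_i$ of $(A_{j,j})_{j\in\Z}$ is equivalent to $A^*_i z > u_i$ for every $i$, i.e., $z > M$ where $M := \max_{1\leq i \leq q} u_i/A^*_i$. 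Therefore
\begin{align*}
m^0(B)
= E \int_0^1 \int_M^\infty w\, \alpha\, z^{-\alpha-1}\, dz\, dt
= w\, E[M^{-\alpha}]
= w\, E \min_{1\leq i \leq q}\bigl(A^*_i u_i^{-1}\bigr)^\alpha,
\end{align*}
which is the claimed limit.

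The only substantive issue is verifying the continuity-set condition $m^0(\partial B) = 0$; this reduces cleanly to the scaling property of $\mu$ via Remark \ref{rk:cts.on.sphere}. Everything else, including the passage from vague $\mathbf{M}_0$-convergence to evaluation on the set $B$ and the explicit integration against $\mu$, is routine once the identification of $T_{\mathbf{A},0}(t,z) \in B$ with $\{z > M\}$ is made.
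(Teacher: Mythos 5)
Your proposal is correct and takes essentially the same route as the paper: identify the event with $\{N_n^0\in B\}$, check that $B$ is bounded away from the null measure and that $m^0(\partial B)=0$, and evaluate $m^0(B)$ by reducing, for fixed $\Abold$ and $z>0$, membership of $T_{\Abold,0}(t,z)$ in $B$ to the order-statistic condition $A_i^*z>u_i$ for all $i$, which integrates against \eqref{mu} to $w\,\E\min_i(A_i^*u_i^{-1})^\alpha$. The only difference is cosmetic: you handle the boundary by noting $\partial B$ is contained in the union of the level sets $\{\xi:\xi([0,1]\times\{u_i\})>0\}$ and invoking Remark \ref{rk:cts.on.sphere}, whereas the paper sandwiches the open set $B$ between it and the closed sets $C_i$ and checks the two resulting integrals coincide; both verifications rest on the same scaling property of $\mu$.
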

\begin{proof}
First note that
  \begin{align*}
    m^0(B(u_1, \dots, u_q)) &= E[(\Leb \times \mu)\circ
    T_{\Abold}^{-1}(B(u_1, \dots, u_q))] \\
    & = E\Big[\mu\Big\{z: \sum \delta_{A_{j,j}z}(u_1,\infty) \geq 1, \dots, \sum
    \delta_{A_{j,j} z}(u_q,\infty) \geq q \Big\} \Big]\\
    & = E[\mu\{z: A^*_1 z \in (u_1, \infty), \dots, A^*_q z \in (u_q,
    \infty)\}]\\
    &= w\, \E \min_{i=1,\ldots, q} \bigl(
A_i^*u_i^{-1}\bigr)^\alpha\,.
  \end{align*}
The claim, therefore, is a direct application  of Theorem \ref{mt}
once we show that the set $ B(u_1, \dots, u_q)$ is bounded away from
the null measure and  $m(\partial B(u_1, \dots,u_q)) = 0$.

The set $B(u_1, \dots, u_q)$ is open. To see this,
write $B(u_1, \dots, u_q) = \cap_{i=1}^q B_i$ with $B_i = \{\xi :
\xi([0,1] \times (u_i,\infty)) \geq i\}$. Then, $B_i^c = \{\xi :
\xi([0,1] \times (u_i,\infty)) < i\}$ and for a
sequence of measures $(\xi_n) \subset B_i^c$ with $\xi_n \vague \xi$ we
have, by the Portmanteau theorem,
\begin{align*}
  i > \liminf_{n \to \infty} \xi_n([0,1] \times
  (u_i,\infty)) \geq \xi([0,1] \times
  (u_i,\infty)).
\end{align*}
Hence, $\xi \in B_i^c$ so $B_i^c$ is closed. This shows that $B_i$ is
open and, consequently, $B(u_1, \dots, u_q)$ is open.
Similarly, the set $C_i = \{\xi :
\xi([0,1] \times [u_i,\infty)) \geq i\}$ is
closed. Since $B_i\subset C_i$ and $C_i$ does not contain the null
measure, we see that each $B_i$ is bounded away from the null
measure and, hence, so is $B$.

Further,   it follows, by the above calculation, that
  \begin{align*}
    m(\partial B(u_1, \dots , u_q)) &= m\bigl({\overline{B(u_1,
    \dots, u_q)}}\bigr) -     m(B(u_1, \dots, u_q)) \\
    &\leq m(\cap_{i=1}^q C_i) - m(B(u_1, \dots, u_q)) \\
    &= E\Big[\int_0^\infty
    I_{[\frac{u_1}{A^*_1}, \infty)}(z) \cdots I_{[\frac{u_q}{A_q^*},\infty)}(z)
    w \,\alpha z^{-\alpha - 1}dz\Big] \\
    &\quad - E\Big[\int_0^\infty
    I_{(\frac{u_1}{A^*_1}, \infty)}(z) \cdots I_{(\frac{u_q}{A_q^*},\infty)}(z)
    w \,\alpha z^{-\alpha - 1}dz\Big] = 0.
  \end{align*}
  This proves the claim.
\end{proof}

\subsection{Hitting times}
Next we consider the large deviations of first hitting times. Take $a
> 0$ and  consider the first hitting time of $(a\gamma_n, \infty)$;
\begin{align*}
  \tau_n = \inf\{k: X_k > a \gamma_n\}.
\end{align*}
\begin{cor}
Let $d=p=1$, and assume that $A_{k,j} \geq 0$ for all $k,j$, and that
the hypotheses of Theorem \ref{mt} hold. Then for any $\lambda>0$
  \begin{align*}
    \frac{P(\tau_n \leq \lambda n)}{n\Prob(|Z|>\gamma_n)}
    \to \lambda E[(A^*_1)^\alpha]
    w a^{-\alpha}
  \end{align*}
\end{cor}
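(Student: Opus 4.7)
The plan is to recast the event $\{\tau_n\le \lambda n\}$ as $\{N_n^0\in B\}$ for a suitable set $B\subset\Npp^0$ that is bounded away from the null measure, has $m^0$-null boundary, and is easy to evaluate under $m^0$; then apply Theorem \ref{mt} exactly as in the proof of Corollary \ref{os}. I present the argument for $\lambda\in(0,1]$; the case $\lambda>1$ reduces to this one by applying the result on the time window $[0,n']$ with $n'=\lfloor \lambda n\rfloor$ and using regular variation of $\gamma_n$ (so that $P(|Z|>\gamma_n)/P(|Z|>\gamma_{n'})\to 1$).

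Define
\begin{equation*}
B \;=\; B_{a,\lambda} \;=\; \{\xi\in\Npp^0 : \xi([0,\lambda]\times(a,\infty))\ge 1\}.
\end{equation*}
Then $\{\tau_n\le \lambda n\}=\{N_n^0\in B\}$. By the Portmanteau argument used for the set $B_i$ in the proof of Corollary \ref{os}, $B$ is open, and since every $\xi\in B$ has a point with second coordinate of absolute value exceeding $a$, $B$ is bounded away from $\xi_0$ in the metric $d$ defined in the Appendix. For the continuity requirement, $\partial B$ is contained in the union of $\{\xi:\xi([0,\lambda]\times\{a\})>0\}$ and $\{\xi:\xi(\{\lambda\}\times(a,\infty))>0\}$; the first set is $m^0$-null by Remark \ref{rk:cts.on.sphere}, while the second is $m^0$-null because the time coordinate under $m^0$ is distributed as Lebesgue measure on $[0,1]$, so a single time level carries no mass.

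It remains to compute $m^0(B)$. Using the definition of $T_{\Abold,0}$ and the nonnegativity of the $A_{j,j}$,
\begin{align*}
m^0(B)
&= E\bigl[(\Leb\times\mu)\{(t,z):\, t\in[0,\lambda],\ A_{j,j}z>a\ \text{for some } j\}\bigr]\\
&= \lambda\, E\bigl[\mu\{z:\, A_1^* z>a\}\bigr]\\
&= \lambda\, w\, a^{-\alpha}\, E[(A_1^*)^\alpha],
\end{align*}
where in the last line we used the explicit form \eqref{mu} of $\mu$ on the positive half-line and the fact that $A_1^*=\sup_j A_{j,j}$ is a.s.\ finite under \eqref{cc1}--\eqref{cc2}, as was verified in the proof of Corollary \ref{os}. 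Theorem \ref{mt} together with the standard continuous-mapping argument at $m^0$-continuity sets now yields
\begin{equation*}
\frac{P(\tau_n\le \lambda n)}{n P(|Z|>\gamma_n)} \;=\; r_n P(N_n^0\in B)\;\longrightarrow\; m^0(B) \;=\; \lambda\, w\, a^{-\alpha}\, E[(A_1^*)^\alpha].
\end{equation*}

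The only step that requires any real care is the verification that $m^0(\partial B)=0$, and in particular that the time coordinate at the specific value $\lambda$ contributes nothing; this is the main (mild) obstacle, but it is immediate once one notices that the Lebesgue factor in the formula for $m^0$ rules out atoms in the time direction. Everything else is a direct specialization of the mechanism already used in Corollary \ref{os}.
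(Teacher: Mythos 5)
Your proof is correct, and for $\lambda\in(0,1)$ it is a genuinely different route from the paper's. The paper reduces the whole claim to $\lambda=1$, observes that $\{\tau_n\le n\}=\{X_{1:n}>\gamma_n a\}$, and invokes Corollary~\ref{os} with $q=1$, $u_1=a$; the reduction from general $\lambda$ is done by replacing $n$ with $[\lambda n]$ and ``redefining appropriately the sequence $(\gamma_n)$.'' You instead work directly at the level of the point measure with the set $B_{a,\lambda}=\{\xi:\xi([0,\lambda]\times(a,\infty))\ge 1\}$ and compute $m^0(B_{a,\lambda})$, which makes the $\lambda$-dependence of the limit transparent and avoids the somewhat implicit reparametrization of $(\gamma_n)$ for $\lambda\le 1$ (you still need a rescaling for $\lambda>1$, just as the paper does). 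This requires one extra verification the paper does not need: that the boundary of $B_{a,\lambda}$ picks up contributions at the time level $t=\lambda$, and that these are killed by the Lebesgue factor in $m^0$; you handle this correctly.

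One small inaccuracy: for $\lambda<1$ the set $B_{a,\lambda}$ is \emph{not} open, because $[0,\lambda]\times(a,\infty)$ is not open in $[0,1]\times(\R\setminus\{0\})$ (take $\xi_n=\delta_{(\lambda+1/n,\,a+1)}\to\delta_{(\lambda,\,a+1)}$: each $\xi_n\in B_{a,\lambda}^c$ but the limit is in $B_{a,\lambda}$). So the appeal to the Portmanteau argument from Corollary~\ref{os} to conclude openness fails. This does not harm the proof, since openness is never used: what you actually need is (i) that $B_{a,\lambda}$ is bounded away from $\xi_0$, which follows because $B_{a,\lambda}\subset\{\xi:\xi([0,1]\times[a,\infty))\ge 1\}$, a closed set not containing $\xi_0$; and (ii) $m^0(\partial B_{a,\lambda})=0$, which you establish correctly via the two null sets. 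I would simply drop the openness claim and cite the closed superset instead.
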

\begin{proof}
It is enough to prove the statement for $\lambda=1$; the proof for a
general $\lambda>0$ will then follow via denoting $m=[\lambda n]$ and
redefining appropriately the sequence $(\gamma_n)$. We
have
\begin{align*}
  r_nP(\tau_n \leq  n) &= r_nP(\sup_{0\leq k \leq n}
  X_k/\gamma_n > a)= r_nP(X_{1:n}>\gamma_na)\,,
\end{align*}
and the statement follows from Corollary \ref{os}.
\end{proof}

\section{Large deviations of the partial sums} \label{sec:ps}

In this section large deviation results for the partial sums
$S_n = X_1 + \dots + X_n$, $n=1,2\ldots$ are considered. The main idea
is to start from Theorem \ref{mt} and derive results for the partial
sum by summing up the points in the point measure $N_n$, while
applying  the continuous mapping argument.

It turns out that for success of this program additional assumptions
are needed. The first assumption is designed to control
the contribution of ``relatively small'' values of the
$X_k$'s. To this end
we introduce the following condition: for each $\delta > 0$
\begin{align}\label{smalljumpcondition}
  \lim_{\vep \downarrow 0}\limsup_n r_n \Prob\Big(\Big|\sum_{k=1}^n
  X_kI\{|X_k| \leq \gamma_n \vep\}\Big| > \gamma_n \delta\Big) =
  0.
\end{align}
The second assumption we need is
\begin{equation} \label{e:summ.A}
\text{the sum $\sum  A_{j,j}$ converges a.s. and} \
 \E \sup_{J\subset \Z}
\Big\| \sum_{j\in J}  A_{j,j} \Big\|^\alpha<\infty\,.
\end{equation}

\begin{thm}\label{partialsumthm} Assume the hypotheses of Theorem
  \ref{mt} and, in addition, that \eqref{smalljumpcondition} and
  \eqref{e:summ.A} hold.
  Then
  \begin{align}\label{partialsum}
    r_n\Prob(\gamma_n^{-1} S_n \in \cdot\,) \to \E\Big[\mu\Big(z:
    \sum_{j \in \Z} A_{j,j}z \in \cdot\,\Big)\Big]
  \end{align}
in $\bfM_0(\R^d )$.
\end{thm}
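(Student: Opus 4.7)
The natural strategy is to apply Theorem \ref{mt} with $q=0$ and then push the point-process large deviation limit forward through a summation mapping, regularizing the (manifestly discontinuous) summation by truncation. Theorem \ref{mt} supplies
$m_n^0 \to m^0$ in $\bfM_0(\Np^0)$ with $m^0=\E[(\Leb\times\mu)\circ T_{\Abold,0}^{-1}]$ and $T_{\Abold,0}(t,z)=\sum_{j}\delta_{(t,A_{j,j}z)}$. The goal is to transfer this to the image measure under summation while using \eqref{smalljumpcondition} to discard small jumps and \eqref{e:summ.A} to make the $\vep\downarrow 0$ limit of the target measures legitimate.

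For $\vep>0$ I would introduce the truncated summation functional $\Phi_\vep:\Np^0\to\R^d$,
\[
\Phi_\vep(\xi) \;=\; \int_{|x|>\vep} x\,\xi(dt\,dx) \;=\; \sum_{i:\,|x_i|>\vep} x_i \qquad\text{for } \xi=\sum_i\delta_{(t_i,x_i)},
\]
the sum being finite because $\xi$ is Radon on $\Eb^0$. The map $\Phi_\vep$ is vaguely continuous at every $\xi$ which has no atom on $[0,1]\times\{|x|=\vep\}$, and Remark \ref{rk:cts.on.sphere} shows that the set of such bad $\xi$ is $m^0$-null. Moreover, for any Borel $B\subset\R^d$ bounded away from $0$, the preimage $\Phi_\vep^{-1}(B)$ is bounded away from the null measure $\xi_0$ (any $\xi\in\Phi_\vep^{-1}(B)$ has at least one atom with $|x|>\vep$). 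The continuous mapping theorem for $\bfM_0$-convergence then yields, for all such $B$ with $(m^0\circ\Phi_\vep^{-1})(\partial B)=0$,
\[
r_n\,\Prob\Bigl(\gamma_n^{-1}\!\sum_{k=1}^n X_k\ind\{|X_k|>\vep\gamma_n\}\in B\Bigr) \;=\; r_n\,\Prob(\Phi_\vep(N_n^0)\in B) \;\to\; (m^0\!\circ\Phi_\vep^{-1})(B),
\]
and a Fubini calculation identifies the limit with $\E\bigl[\mu\{z:\sum_j A_{j,j}z\ind\{|A_{j,j}z|>\vep\}\in B\}\bigr]$.

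Now write $\gamma_n^{-1}S_n=\Phi_\vep(N_n^0)+R_n^\vep$ with $R_n^\vep=\gamma_n^{-1}\sum_{k=1}^n X_k\ind\{|X_k|\leq\vep\gamma_n\}$. Hypothesis \eqref{smalljumpcondition} is precisely $\lim_{\vep\downarrow 0}\limsup_n r_n\Prob(|R_n^\vep|>\delta)=0$ for every $\delta>0$, which is the ``negligible remainder'' condition needed to combine the two ingredients by a Slutsky-type lemma for $\bfM_0$-convergence (an approximation scheme where both the approximating and approximate random elements are normalized by $r_n$). Together with the $\vep$-truncated convergence above, this reduces the proof to showing that, as $\vep\downarrow 0$,
\[
\E\bigl[\mu\{z:\textstyle\sum_j A_{j,j}z\ind\{|A_{j,j}z|>\vep\}\in\cdot\,\}\bigr] \;\longrightarrow\; \E\bigl[\mu\{z:\textstyle\sum_j A_{j,j}z\in\cdot\,\}\bigr]
\]
in $\bfM_0(\R^d)$. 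This is where \eqref{e:summ.A} enters twice: the a.s.~convergence of $\sum_j A_{j,j}$ makes the right-hand side well defined, and for any $B\subset\{|x|\geq\eta\}$ it supplies the uniform domination
\[
\mu\bigl\{z:\textstyle\sum_j A_{j,j}z\ind\{|A_{j,j}z|>\vep\}\in B\bigr\} \;\leq\; C\eta^{-\alpha}\,\sup_{J\subset\Z}\bigl\|\textstyle\sum_{j\in J}A_{j,j}\bigr\|^\alpha,
\]
an integrable envelope by \eqref{e:summ.A}. Dominated convergence, followed by verification that the limit measure is Radon on $\R^d\setminus\{0\}$, then closes the argument.

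The main obstacle is the final passage to the limit. Each fixed-$\vep$ continuous-mapping step is routine, but gluing them together requires a careful ``convergence together'' principle in $\bfM_0$ (analogous to Theorem 2.1 of \cite{HLMS05} in the functional i.i.d.~setting), ensuring that the limit measures $m^0\circ\Phi_\vep^{-1}$ actually converge in $\bfM_0(\R^d)$ and that the error from \eqref{smalljumpcondition} is absorbed uniformly over test sets bounded away from $0$. It is precisely for this joint control that \eqref{e:summ.A} is indispensable; in its absence neither the target measure nor the dominating function above would be meaningful.
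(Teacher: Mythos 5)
There is a genuine gap, and it sits exactly where your proposal leans on the ``continuous mapping'' step. Your functional $\Phi_\vep(\xi)=\int_{|x|>\vep}x\,\xi(dt\,dx)$ is neither well defined on all of $\Np^0$ nor vaguely continuous anywhere. The state space $\Eb^0=[0,1]\times(\R^d\setminus\{0\})$ is not compactified at infinity: its compact sets are contained in annuli $[0,1]\times\{a\leq|x|\leq b\}$, so a Radon point measure may carry infinitely many atoms with $|x_i|>\vep$ escaping to infinity (making your sum divergent), and vague convergence does not control such mass at all. Concretely, for any $\xi$ the measures $\xi_n=\xi+\delta_{(1/2,\,n e_1)}$ converge vaguely to $\xi$, while $\Phi_\vep(\xi_n)=\Phi_\vep(\xi)+n e_1$ diverges; so the claim that $\Phi_\vep$ is continuous off $\{|x|=\vep\}$ is false, and Remark \ref{rk:cts.on.sphere} cannot repair it. This is also why Lemma \ref{mapthm} is stated for $f$ with \emph{bounded} support: your $f(t,x)=x\,\ind\{|x|>\vep\}$ does not qualify, and no $\bfM_0$-mapping theorem applies to it without extra control of the far tail of the points.

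The repair is a two-sided truncation, which is what the paper does: take $g_\vep(x)=x$ on $\vep<|x|\leq 1/\vep$ and $g_\vep=0$ otherwise, so that $\xi\mapsto\xi(g_\vep)$ has compactly supported integrand and Lemma \ref{mapthm} applies via Theorem \ref{mt} with $q=0$ (continuity a.e.\ now really follows from Remark \ref{rk:cts.on.sphere} applied to the two spheres $|x|=\vep$ and $|x|=1/\vep$). The price is an additional error term absent from your decomposition, namely the contribution of the very large values, which must be shown negligible:
\begin{align*}
r_n\,\Prob\Big(\Big|\sum_{k=1}^n X_k\ind\{|X_k|>\gamma_n/\vep\}\Big|>\gamma_n\delta/2\Big)
\leq r_n\, n\,\Prob(|X_0|>\gamma_n/\vep)\to \mathrm{const}\cdot\vep^\alpha
\end{align*}
by Theorem \ref{trs}, which vanishes as $\vep\to 0$; this is the second member of \eqref{r2} in the paper and has no counterpart in your argument, since you pushed all values above $\vep\gamma_n$ into the (invalid) mapping step. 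The remaining ingredients of your plan do match the paper: \eqref{smalljumpcondition} kills the small jumps, and the $\vep\downarrow 0$ identification of the limit measure uses dominated convergence with essentially your envelope, $|f(\sum_j g_\vep(A_{j,j}z))|\leq |f|_\infty\,\ind\{\sup_{J\subset\Z}|\sum_{j\in J}A_{j,j}z|>\eta\}$, integrable by the scaling of $\mu$ and \eqref{e:summ.A}. Also note that the ``convergence together'' issue you flag at the end does not require a new lemma; the paper handles it by hand, for uniformly continuous $f\in C_0(\R^d)$, via a five-term splitting in $\delta$, $\vep$, $n$.
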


\begin{rem}\label{unif}
  Note that the large deviation result is uniform in the sense that
  the normalization $r_n$ is the same for all sets. In
  particular, the univariate result ($p = d = 1$) can be stated as
  \begin{align*}
    \lim_{n\to\infty}\frac{\Prob(S_n
      > \rho\gamma_n)}{n\Prob(|Z|>\rho\gamma_n)} = w
    \E\Big(\Big[\big(\sum
    A_{j,j}\big)^+\Big]^\alpha\Big) +(1-w)\E\Big(\Big[\big(\sum
    A_{j,j}\big)^-\Big]^\alpha\Big)
  \end{align*}
  for every $\rho> 0$,  where  the limiting measure associated with
  $Z$ is given by \eqref{mu}.
\end{rem}

\begin{rem}\label{abs}
  In some cases, replacing conditions \eqref{smalljumpcondition} and
  \eqref{e:summ.A} by somewhat stronger conditions, we can modify the
  proof of Theorem \ref{partialsumthm} to obtain large deviations of the
  partial sum of the absolute values of the process. It is sufficient to
  change condition \eqref{e:summ.A} to
  \begin{equation} \label{e:summ.Aabs}
    \E \Big(\sum  \| A_{j,j} \|\Big)^\alpha<\infty\,.
  \end{equation}
  If $0<\alpha\leq 1$, or $\alpha>1$ and $n/\gamma_n\to 0$, then
  it is sufficient to change condition \eqref{smalljumpcondition}
  to, for each $\delta > 0$,
  \begin{align}\label{smalljumpcondition.abs1}
    \lim_{\vep \downarrow 0}\limsup_n r_n \Prob\Big(\sum_{k=1}^n
    |X_k|I\{|X_k| \leq \gamma_n \vep\} > \gamma_n \delta\Big) =
    0.
  \end{align}
  In this case one concludes that
  $S^\text{\text{abs}}_n = \sum_{k=1}^n |X_k|$ satisfies
  \begin{align}\label{partialsumabs}
    r_n\Prob(\gamma_n^{-1} S^{\text{abs}}_n \in \cdot\,) \to \E\Big[\mu\Big(z:
    \sum_{j \in \Z} |A_{j,j}z| \in \cdot\,\Big)\Big]
  \end{align}
  in $\bfM_0(\R^d )$.
  
  If, on the other hand, $\alpha>1$ and $\gamma_n\equiv n$, then
  it is sufficient to change condition \eqref{smalljumpcondition} 
  to, for each $\delta > 0$, 
  \begin{align}\label{smalljumpcondition.abs2}
    \lim_{\vep \downarrow 0}\limsup_n r_n \Prob\Big(\bigl|\sum_{k=1}^n
    \bigl(|X_k|-E|X_0|\bigr)I\{|X_k| \leq n \vep\}\bigr| >
    n \delta\Big) =   0,
  \end{align}
  and then $\bigl( S^{\text{abs}}_n -n E|X_0|\bigr)$ satisfies
  \begin{align}\label{partialsumabs.1}
    r_n\Prob(n^{-1} \bigl( S^{\text{abs}}_n -n E|X_0|\bigr)\in
    \cdot\,) \to \E\Big[\mu\Big(z: \sum_{j \in \Z} |A_{j,j}z| \in
    \cdot\,\Big)\Big]
  \end{align}
  in $\bfM_0(\R^d )$.
\end{rem}

\begin{proof}
  The idea is to divide $S_n$ into three parts. One term
  containing  the terms where $\vep<|X_k| \leq 1/\vep$ for a small
  positive $\vep$,  and the other two parts containing
  terms with $|X_k| \leq \vep$ and $|X_k| >1/\vep$, respetively. The
  contribution from the latter two parts turns out to be negligible.

  For $0<\vep <1$ let $g_\vep$ be a function $[0,1]
  \times \R^d \setminus \{0\} \to \R^d$
  such that $g_\vep(t, x) = g_\vep(x) = x$
  on $\vep<|x| \leq 1/\vep$, $g_\vep(x) = 0$  for all other values of
  $x$. First we consider the convergence of $r_n\Prob(N_n^0(g_{\vep})
  \in \cdot)$ with $N_n^0$ as in Theorem \ref{mt}. Let $m^0$ be the
  limiting measure in \eqref{limitmeasure} with
  $q = 0$. Note that $g_\vep$ is continuous except at the points
  $|x| = \vep$ and $1/\vep$. By Remark \ref{rk:cts.on.sphere}
  \begin{align*}
    m^0\Bigl\{\xi: \xi\bigl([0,1]\times \{|x| = \vep \text{ or }
    1/\vep\}\bigr) > 0\Bigr\} = 0\,.
  \end{align*}
  Hence, the map $\xi \mapsto \xi(g_\vep)$ from $\Np$ to $\R^d$
  satisfies the continuity assumption in the mapping theorem (Lemma
  \ref{mapthm}). Therefore, Theorem \ref{mt}, with $q = 0$, together
  with the  mapping   theorem, imply that
  \begin{align}
    r_n\Prob(N_n^0(g_{\vep}) \in \cdot\,) &\to
      \E\Big[(\Leb \times \mu)\Big((t,z):
      \sum_{j \in \Z} g_{\vep}(t,A_{j,j}z)
      \in \cdot\,\Big)\Big]
      := \tilde m_\vep(\cdot) \label{r1}
  \end{align}
  in $\bfM_0(\R^d)$.

  Put $\tilde m_n(\cdot) = r_n\Prob(\gamma_n^{-1} S_n \in \cdot\,)$
  and $\tilde m$ as in the
  right-hand-side of \eqref{partialsum}. We need to show $ \tilde m_n(f) \to
  \tilde m(f)$ for any $f \in C_0(\R^d)$; in fact, it is sufficient to
  consider uniformly continuous $f$ (see the Appendix). For any such
  $f$ there is  $\eta > 
  0$ such that $x \in
  \textrm{support}(f)$ implies $|x| > \eta$. For any $\delta>0$
  \begin{align*}
    \tilde m_n(f) &= r_n \E[f(\gamma_n^{-1}S_n)] \\
    &= r_n \E[f(\gamma_n^{-1}S_n)I\{|\gamma_n^{-1}S_n-N_n^0(g_\vep)| >
    \delta\}] \\
    &\quad + r_n \E[f(\gamma_n^{-1}
    S_n)I\{|\gamma_n^{-1}S_n-N_n^0(g_\vep)| \leq \delta\}].
  \end{align*}
  The first term is bounded above by
  \begin{align}
    |f|_\infty r_n \Prob(|\gamma_n^{-1} S_n-N_n^0(g_\vep)| &> \delta)
    \leq
    |f|_\infty r_n \Prob\Big(\Big | \sum_{k=1}^n X_kI\{|X_k|\leq
    \gamma_n \vep\}\Big| > \frac{\gamma_n \delta}{2}\Big) \nonumber \\
    & + |f|_\infty r_n \Prob\Big(\Big| \sum_{k=1}^n X_kI\Big\{|X_k|>
    \frac{\gamma_n}{\vep}\Big\}\Big| > \frac{\gamma_n
      \delta}{2}\Big)\,. \label{r2}
  \end{align}
  The assumption \eqref{smalljumpcondition} guarantees that the first
  member in the right hands side of \eqref{r2} is asymptotically
  negligible. The second member in the
  right hands side of \eqref{r2} is, up to a constant, bounded above by
  \begin{align*}
    r_n\Prob\Big(\max_{k=1,\dots,n} |X_k| > \gamma_n/\vep\Big) \leq r_n n
    \Prob(|X_0| > \gamma_n/\vep) \to 0
  \end{align*}
  as first $n \to \infty$ and then $\vep \to 0$.

  Therefore, the statement of the theorem will follow once we show that
  \begin{align} \label{cl1}
   \lim_{\delta  \downarrow 0} \limsup_{\vep \downarrow 0} \limsup_{n \to
      \infty} r_n
    \E\big[\big|f(\gamma_n^{-1}S_n)-f(N_n^0(g_\vep))\big|
      I\{|\gamma_n^{-1}S_n-N_n^0(g_\vep)| \leq  \delta\}\big] = 0,
  \end{align}
  and
  \begin{align}\label{cl2}
    \lim_{\vep \downarrow 0} \tilde m_\vep(f) = \tilde m(f).
  \end{align}
  Indeed, in that case we could write for each $\vep > 0$ and $\delta > 0$
  \begin{align*}
  |\tilde m_n(f)-\tilde m(f)| &\leq
    r_n \E[|f(\gamma_n^{-1}S_n)-f(N_n^0(g_\vep))|I\{|\gamma_n^{-1}S_n-N_n^0(g_\vep)|
    \leq \delta\}] \\ 
    & + r_n \E[f(\gamma_n^{-1}S_n)I\{|\gamma_n^{-1}S_n-N_n^0(g_\vep)|
    > \delta\}]\\
    & + |r_n\E[f(N_n^0(g_\vep))I\{|\gamma_n^{-1} S_n-N_n^0(g_\vep)| \leq
    \delta\}] - r_n\E[f(N_n^0(g_\vep))]| \\
    & + |r_n\E[f(N_n^0(g_\vep))] - m_\vep(f)|\\
    & + |\tilde m_{\vep}(f) - \tilde m(f)|.
  \end{align*}
  By \eqref{cl1}, the argument in
  \eqref{r2}, \eqref{r1}, and \eqref{cl2}, each term converges to $0$
  as first $n \to \infty$, then $\vep \downarrow 0$, and finally
  $\delta \downarrow 0$ .

   It remains to prove \eqref{cl1} and \eqref{cl2}.
   We start with \eqref{cl1}. Choose
   $\delta$ above to be smaller than $\eta/2$. The reason for this is
   that if either $f(\gamma_n^{-1}S_n) > 0$ or $f(N_n^0(g_\vep)) > 0$, then
   on $\{|\gamma_n^{-1} S_n - N_n^0(g_\vep)| \leq \delta\}$ we have
   $|N_n^0(g_\vep)| >
   \eta /2$.  Since $f$ is uniformly continuous the
   expression in \eqref{cl1}
   is bounded above by
   \begin{align*}
     & o_\delta(1)  \,  r_n \E\Bigl[ I\{|N_n^0(g_\vep)| >
     \eta/2\}I\{|\gamma_n^{-1}S_n - N_n^0(g_\vep)| \leq \delta\}\Bigr]\\
     &\quad \leq o_\delta(1) r_n \Prob(|N_n^0(g_\vep)| > \eta/2).
   \end{align*}
   As $n \to \infty$ and $\vep\downarrow 0$, \eqref{r1} and
   \eqref{cl2} (still to be proved) show that this remains bounded by
   const $o_\delta(1)$. As $\delta \downarrow 0$ this converges to $0$.

   It remains to show \eqref{cl2}.
   We have, as $\vep \downarrow 0$,
   \begin{align*}
     \tilde m_\vep(f) &= \int_\Omega \int_{\R^d \setminus \{0\}} f\Big(\sum
     g_\vep(A_{j,j}z)\Big) \mu(dz)\Prob(d\omega)\\
     &\to \int_\Omega \int_{\R^d \setminus \{0\}} f\Big(\sum
     A_{j,j}z \Big) \mu(dz)\Prob(d\omega) \\
     &= \tilde m(f),
   \end{align*}
   by dominated convergence. Indeed, $\sum g_\vep(A_{j,j}z) \to \sum
   A_{j,j}z$, $\mu \times \Prob$-a.e.~as $\vep
   \downarrow 0$, $f$ is continuous, and
$$
\bigl| f(\sum g_\vep(A_{j,j}z))\bigr| \leq |f|_\infty
I\Bigl\{\sup_{J\subset \Z} \Bigl|
\sum_{j\in J}  A_{j,j}z\Bigr| > \eta\Bigr\}\,,
$$
 which is $\mu \times \Prob$-integrable by the
   scaling property of the measure $\mu$ and the assumption
   \eqref{e:summ.A}.
 \end{proof}

\subsection{Checking the conditions of Theorem \ref{partialsumthm}}
To apply Theorem \ref{partialsumthm} one needs to verify the extra
assumptions imposed there. In this section we provide conditions
that are easier to check for some more specific models.

\begin{prop} \label{pr:alpha.le1}
  Let $(X_k)$ be the stationary process in \eqref{x} satisfying the
  conditions of Theorem \ref{mt}.  If $0 < \alpha < 1$, then
  \eqref{smalljumpcondition.abs1} holds and, hence,
  \eqref{smalljumpcondition} holds as well. If $0 < \alpha \leq 1$, then
\eqref{e:summ.Aabs} holds and, hence, \eqref{e:summ.A} holds as well.
\end{prop}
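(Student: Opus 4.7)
The plan is to verify each condition directly from the hypotheses of Theorem \ref{mt}, exploiting two trivial reductions: the triangle inequality $\|\sum_{j\in J}A_{j,j}\|\le \sum_j\|A_{j,j}\|$ shows that \eqref{e:summ.Aabs} implies \eqref{e:summ.A}, and $\bigl|\sum_k X_k\ind\{|X_k|\le\gamma_n\vep\}\bigr|\le \sum_k |X_k|\ind\{|X_k|\le\gamma_n\vep\}$ shows that \eqref{smalljumpcondition.abs1} implies \eqref{smalljumpcondition}. So it remains to establish \eqref{e:summ.Aabs} for $0<\alpha\le 1$ and \eqref{smalljumpcondition.abs1} for $0<\alpha<1$.

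For \eqref{e:summ.Aabs}, subadditivity of $x\mapsto x^\alpha$ on $[0,\infty)$ gives $(\sum_j\|A_{j,j}\|)^\alpha\le \sum_j\|A_{j,j}\|^\alpha$, and stationarity of $(\Abold_k)_{k\in\Z}$ yields $E\|A_{j,j}\|^p=E\|A_{0,j}\|^p=:E\|A_j\|^p$ for all $p>0$, so the task reduces to showing $\sum_j E\|A_j\|^\alpha<\infty$. For $0<\alpha<1$ this is immediate from \eqref{cc1} together with the pointwise bound $x^\alpha\le x^{\alpha-\vep}+x^{\alpha+\vep}$ (split on $x\le 1$ versus $x>1$). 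The case $\alpha=1$ is the main technical step: \eqref{cc1.5} only supplies $E(\sum_j\|A_j\|^{1-\vep})^{\beta}<\infty$ where $\beta:=(1+\vep)/(1-\vep)>1$, and one has to squeeze $\sum_j E\|A_j\|<\infty$ out of it. My route is the pathwise bound
\[
\sum_j\|A_j\|\le \sum_j\|A_j\|^{1-\vep}+\sum_j\|A_j\|^{1+\vep}
\]
(again from splitting $\|A_j\|\le 1$ versus $>1$), followed by the identity $\|A_j\|^{1+\vep}=(\|A_j\|^{1-\vep})^\beta$ and the elementary inequality $\sum_j b_j^\beta\le(\sum_j b_j)^\beta$ valid for $\beta\ge 1$ and $b_j\ge 0$, which bounds $\sum_j\|A_j\|^{1+\vep}$ by $(\sum_j\|A_j\|^{1-\vep})^\beta$. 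Taking expectations and applying Jensen's inequality to bound the $L^1$ norm of $\sum_j\|A_j\|^{1-\vep}$ by its $L^\beta$ norm closes the argument.

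For \eqref{smalljumpcondition.abs1} with $0<\alpha<1$, Markov's inequality and stationarity of $(X_k)$ give
\[
r_n\Prob\Bigl(\sum_{k=1}^n |X_k|\ind\{|X_k|\le\gamma_n\vep\}>\gamma_n\delta\Bigr)\le \frac{r_n n}{\gamma_n\delta}\,E|X_0|\ind\{|X_0|\le \gamma_n\vep\}.
\]
Theorem \ref{trs} gives $P(|X_0|>u)\sim C\,P(|Z|>u)$ with $C\in(0,\infty)$, so $P(|X_0|>\cdot)$ is regularly varying of index $-\alpha$. Since $\alpha<1$, Karamata's theorem yields $E|X_0|\ind\{|X_0|\le u\}\sim (1-\alpha)^{-1}u\,P(|X_0|>u)$, and substituting $u=\gamma_n\vep$, using $r_n n P(|Z|>\gamma_n)=1$ together with the regular variation of $P(|Z|>\cdot)$, reduces the right-hand side to $\tfrac{C\,\vep^{1-\alpha}}{(1-\alpha)\delta}+o(1)$; letting $\vep\downarrow 0$ gives the claim because $1-\alpha>0$. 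The principal obstacle is the $\alpha=1$ case of \eqref{e:summ.Aabs} sketched above; the remaining steps are routine.
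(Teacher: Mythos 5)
Your proof is correct, and its overall architecture coincides with the paper's: the small-jump condition \eqref{smalljumpcondition.abs1} is handled exactly as in the paper (Markov's inequality, stationarity, Karamata's theorem applied via the tail equivalence of Theorem \ref{trs}, yielding the bound $C\vep^{1-\alpha}/\delta$), and the moment condition \eqref{e:summ.Aabs} is reduced, as in the paper, by subadditivity of $x\mapsto x^\alpha$ and stationarity to $\sum_j \E\|A_j\|^\alpha<\infty$. Where you diverge is in how that last series is controlled. The paper bounds $\E\|A_j\|^\alpha\leq\bigl(\E\|A_j\|^{\alpha-\vep}\bigr)^{1/2}\bigl(\E\|A_j\|^{\alpha+\vep}\bigr)^{1/2}$ and sums by Cauchy--Schwarz, invoking \eqref{cc1} for all $0<\alpha\leq 1$; you instead split $x^\alpha\leq x^{\alpha-\vep}+x^{\alpha+\vep}$ for $\alpha<1$, and at $\alpha=1$ you work directly from \eqref{cc1.5}, using the pathwise bound $\sum_j\|A_j\|^{1+\vep}\leq\bigl(\sum_j\|A_j\|^{1-\vep}\bigr)^{(1+\vep)/(1-\vep)}$ together with Jensen's inequality. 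This last step is a genuine improvement in care: Theorem \ref{mt} assumes \eqref{cc1} only for $\alpha\in(0,1)\cup(1,2)$ and \eqref{cc1.5} at $\alpha=1$, so the paper's citation of \eqref{cc1} in the case $\alpha=1$ is not literally covered by the hypotheses, whereas your superadditivity-plus-Jensen argument shows that \eqref{cc1.5} by itself yields both $\sum_j\E\|A_j\|^{1-\vep}<\infty$ and $\sum_j\E\|A_j\|^{1+\vep}<\infty$, closing that small gap. Two cosmetic remarks: the implication \eqref{e:summ.Aabs} $\Rightarrow$ \eqref{e:summ.A} also needs the (immediate) observation that $\sum_j\|A_{j,j}\|<\infty$ a.s.\ gives the a.s.\ convergence of $\sum_j A_{j,j}$, which you should state; and the constant $C$ in your Karamata step could in principle be zero (if the limit measure in Theorem \ref{trs} degenerates), in which case one should use Karamata as an upper bound rather than an asymptotic equivalence --- the same implicit convention the paper uses, so this is not a defect relative to the original argument.
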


\begin{proof}
  Assume that $0<\alpha<1$. By Markov's inequality, Karamata's
  theorem, and, finally, Theorem \ref{trs}
  \begin{align*}
    \lim_{\vep \downarrow 0}& \limsup_n r_n \Prob\Big(\sum_{k=1}^n
    |X_k|I\{|X_k| \leq \gamma_n \vep\} > \gamma_n \delta\Big) \\
    & \leq \lim_{\vep \downarrow 0}\limsup_n r_n (\gamma_n
    \delta)^{-1}E\Big(\sum_{k=1}^n |X_k|I\{|X_k| \leq \gamma_n
    \vep\}\Big)\\
    &= \lim_{\vep \downarrow 0}\limsup_n r_n n (\gamma_n
    \delta)^{-1}E |X_0|I\{|X_0| \leq \gamma_n\vep\}\\
    &= \lim_{\vep \downarrow 0}\limsup_n r_n n (\gamma_n
    \delta)^{-1} C (\gamma_n \vep) \Prob(|X_0| > \gamma_n\vep) \\
    & = \lim_{\vep \downarrow 0} \limsup_n C\, \frac{\vep \Prob(|X_0| >
      \gamma_n\vep)}{\delta \Prob(|Z|>\gamma_n)}\\
    &= \lim_{\vep \downarrow 0} C\, \frac{\vep^{1-\alpha}}{\delta} = 0,
  \end{align*}
  and so \eqref{smalljumpcondition} holds. If $0 < \alpha \leq 1$ then
  by \eqref{cc1} and Cauchy-Schwarz inequality
  \begin{align*}
    \E\left( \sum \| A_{j}\|\right)^\alpha
    &\leq  \sum E\| A_{j}\|^\alpha \\
    &\leq \sum \bigl(\E\| A_{j}\|^{\alpha -\vep}\bigr)^{1/2}
    \bigl(\E\| A_{j}\|^{\alpha +\vep}\bigr)^{1/2}\\
    &\leq \left( \sum E\| A_{j}\|^{\alpha -\vep}\right)^{1/2}
    \left( \sum E\| A_{j}\|^{\alpha +\vep}\right)^{1/2}<\infty,
\end{align*}
and so \eqref{e:summ.Aabs} holds.
\end{proof}

If the sum \eqref{x} defining the process $(X_k)$ is finite, then
modest additional assumptions on the sequence $(\Abold_k)_{k \in \Z}$
will guarantee applicability of Theorem \ref{partialsumthm}. We
present one such situation. 

\begin{prop}\label{pr:finite.sum}
 Let $(X_k)$ be the stationary process in \eqref{x} satisfying the
conditions of Theorem \ref{mt}. Suppose, further, that the sequence
$(\Abold_k)_{k \in \Z}$ is i.i.d. such that for some $M=0,1,2,\ldots$,
$A_{k,j}=0$ a.s. for $|j|>M$. Then \eqref{smalljumpcondition} holds
and, further, \eqref{smalljumpcondition.abs1} and
\eqref{smalljumpcondition.abs2} (as appropriate) hold. Also,
both \eqref{e:summ.Aabs} and
\eqref{e:summ.A} hold, and so Theorem \ref{partialsumthm} applies.
\end{prop}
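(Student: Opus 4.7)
The decisive feature of the hypothesis is that each $X_k$ is a local function of $(\mathbf{A}_k, Z_{k-M},\dots,Z_{k+M})$: since $(\mathbf{A}_k)$ is i.i.d.\ in $k$ and the $Z$-windows of different indices separated by more than $2M$ are disjoint, for any fixed residue $r\in\{0,1,\dots,2M\}$ the subsequence $(X_{(2M+1)m+r})_{m\in\Z}$ is i.i.d.\ with the marginal law of $X_0$. I would make this block-independence the backbone of the proof, reducing everything to known i.i.d.\ facts together with a union bound over the $2M+1$ residue classes.

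First I would dispatch \eqref{e:summ.A} and \eqref{e:summ.Aabs}. Since $A_{j,j}=0$ a.s.\ for $|j|>M$, both the series $\sum_{j\in\Z}A_{j,j}$ and its supremum over subsets collapse to the finite sum $\sum_{|j|\le M}\|A_{j,j}\|$. One checks from \eqref{cc1}--\eqref{cc2} that $\E\|A_{0,j}\|^{\alpha+\vep}<\infty$ for each individual $j$: in the cases $\alpha\in\{1,2\}$ and $\alpha>2$ this uses the pointwise dominations $\|A_{0,j}\|^{\alpha-\vep}\le\sum_\ell\|A_{0,\ell}\|^{\alpha-\vep}$ and $\|A_{0,j}\|^{2}\le\sum_\ell\|A_{0,\ell}\|^{2}$ inside the outer power. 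A crude $\alpha$-moment inequality on a finite sum, combined with stationarity of $(\mathbf{A}_k)$, then yields $\E\bigl(\sum_{|j|\le M}\|A_{j,j}\|\bigr)^{\alpha}<\infty$.

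For the small-jump conditions, the case $0<\alpha<1$ is already covered by Proposition \ref{pr:alpha.le1}. For $\alpha\ge 1$ I would write
\begin{align*}
\sum_{k=1}^n X_k I\{|X_k|\le\gamma_n\vep\}
=\sum_{r=0}^{2M}\sum_{k\in I_r}X_k I\{|X_k|\le\gamma_n\vep\},
\qquad I_r=\{1\le k\le n:\ k\equiv r\pmod{2M+1}\},
\end{align*}
apply a union bound with threshold $\gamma_n\delta/(2M+1)$, and invoke the fact that each inner sum is a truncated partial sum of $n_r\asymp n/(2M+1)$ i.i.d.\ copies of $X_0$. By Theorem \ref{trs}, $X_0$ is regularly varying of index $\alpha$ with $\Prob(|X_0|>\gamma_n)\asymp \Prob(|Z|>\gamma_n)$, and when $\alpha>1$ the centering $\E X_0=0$ follows from $\E Z=0$, the independence of $(\mathbf{A})$ and $(Z)$, and absolute integrability of the (finite) sum defining $X_0$. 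The classical i.i.d.\ small-jump statement --- which is exactly the truncation step in the proof of the functional large-deviation Theorem~2.1 of \cite{HLMS05}, valid under the growth condition \eqref{gamman} --- shows the normalized probability with normalization $1/(n_r\Prob(|X_0|>\gamma_n))$ vanishes, and this is $r_n$ up to a constant, giving \eqref{smalljumpcondition}. The arguments for \eqref{smalljumpcondition.abs1} and \eqref{smalljumpcondition.abs2} run in parallel, replacing $X_k$ by $|X_k|$, or by $|X_k|-\E|X_0|$ in the borderline regime $\gamma_n\equiv n$, $\alpha>1$, and applying the corresponding i.i.d.\ truncation bound for non-negative (and, where needed, centered) regularly varying variables.

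The main obstacle I anticipate is clean bookkeeping: matching the normalizations $r_n$ versus $1/(n_r\Prob(|X_0|>\gamma_n))$ and citing precisely the right i.i.d.\ truncation bound in each of the sub-cases $\alpha\in[1,2)$, $\alpha=2$, $\alpha>2$ (Karamata-based mean control when $\alpha$ is small, Chebyshev-type variance control when $\alpha>2$). Once Steps 1 and 2 are in place, the four required conditions hold, so Theorem \ref{partialsumthm} applies, and so do its absolute-value variants via Remark \ref{abs}.
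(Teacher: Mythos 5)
Your proposal is correct in substance, but it takes a genuinely different route from the paper's. Both arguments ultimately reduce the small-jump conditions to the i.i.d.\ truncation step of \cite{HLMS05}, but the paper decomposes by lag: it first discards the event that two distinct products $A_{k,j}Z_{k-j}$ are simultaneously of order $\gamma_n$ (this costs $o(1/r_n)$, by Lemma 3.4 of \cite{HS08I}), replaces the truncation $I\{|X_k|\le\gamma_n\vep\}$ by a product-level truncation, and then, for each fixed $|j|\le M$, uses that $(A_{k,j}Z_{k-j})_{k}$ is i.i.d.\ across $k$ (this is where the i.i.d.\ assumption on $(\Abold_k)$ enters) together with the argument of Lemma 2.1 in \cite{HLMS05}. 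Your residue-class splitting mod $2M+1$ instead makes the truncated summands $X_kI\{|X_k|\le\gamma_n\vep\}$ themselves i.i.d.\ within each class, so you avoid both the two-large-products step and the change of truncation level; that is a cleaner reduction. Your handling of \eqref{e:summ.A} and \eqref{e:summ.Aabs}, and the delegation of $0<\alpha<1$ to Proposition \ref{pr:alpha.le1}, coincide with the paper.

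One step needs more care than your sketch acknowledges: the truncation lemma of \cite{HLMS05} is stated for an innovation law satisfying \eqref{gamman}, and you apply it to the law of $X_0$, so you must verify the corresponding hypotheses for $X_0$ with the same $(\gamma_n)$. For $\alpha>1$ this is quick: $\E X_0=0$ (Fubini, using $\E\|A_{0,j}\|<\infty$ from \eqref{cc1}--\eqref{cc2}) and $\Prob(|X_0|>x)\le C\Prob(|Z|>x)$ from Theorem \ref{trs} give the drift and truncated-variance bounds using only $r_n\to\infty$ and the $\alpha\ge 2$ growth conditions in \eqref{gamman}, which involve only $\gamma_n$ and $n$. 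At $\alpha=1$, however, the condition $(Z_1+\dots+Z_n)/\gamma_n\to0$ controls the truncated mean of $Z$, not of $X_0$, and transferring it to $X_0$ is not automatic (and $\E|Z|$ may be infinite, so crude bounds fail); the natural fix is a Potter/Breiman-type comparison, most easily done by writing a sum of i.i.d.\ copies of $X_0$ lag by lag as $2M+1$ sums of i.i.d.\ products $A_jZ$ --- i.e.\ by re-importing a piece of the paper's decomposition for this auxiliary verification. Also, your normalization $1/\bigl(n_r\Prob(|X_0|>\gamma_n)\bigr)$ matches $r_n$ only when the limiting constant in Theorem \ref{trs} is positive; in degenerate multivariate cases where it vanishes a separate (easier) bound is required. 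With these points addressed, your bookkeeping over the sub-cases of $\alpha$ and the parallel treatment of \eqref{smalljumpcondition.abs1} and \eqref{smalljumpcondition.abs2} is at the same level of detail as the paper's own ``the proof is similar.''
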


When the i.i.d.\ assumption of the sequence $(\Abold_k)_{k \in \Z}$ is
dropped one can still obtain sufficient conditions for
\eqref{smalljumpcondition}. See Lemma \ref{l:trunc.low}. 

\begin{proof}
For finite sums the condition \eqref{e:summ.Aabs} is a trivial
consequence of \eqref{cc1} - \eqref{cc2}. We will show that
\eqref{smalljumpcondition} holds; the proof for
\eqref{smalljumpcondition.abs1} and \eqref{smalljumpcondition.abs2} is
similar. It is, clearly, enough to consider the
case $d=1$. Notice, further, that
\begin{equation} \label{e:bound.small1}
\Prob\Big(\bigl|\sum_{k=1}^n
  X_kI\{|X_k| \leq \gamma_n \vep\}\bigr| > \gamma_n \delta\Big)
\end{equation}
$$
\leq \Prob\Big( \text{for some $k=1,\ldots, n$,} \  \
  |A_{k,j}Z_{k-j}|>\gamma_n\vep \ \ \text{for at least 2 different
    $j$}\bigr)
$$
$$
+ \Prob\Big(\bigl|\sum_{k=1}^n
  X_kI\{ |A_{k,j}Z_{k-j}|\leq \gamma_n M\vep \ \text{for all
    $k=1,\ldots ,n$ and $|j|\leq M$}\}>\gamma_n \delta\Big)\,.
$$
The first term in the right hand side of \eqref{e:bound.small1} is
bounded by
$$
n\sum_{{i,j=-M, \ldots, M}\atop{i\neq j}}\Prob\Big(
|A_{0,i}Z_{-i}|>\gamma_n\vep , \, |A_{0,j}Z_{-j}|>\gamma_n\vep
\bigr)
$$
$$
= o(1)\, n\Prob (|Z|>\gamma_n) = o(1)(1/r_n),
$$
as in Lemma 3.4 in \cite{HS08I}. The second term in the right hand
side of \eqref{e:bound.small1} does not exceed
$$
\sum_{j=-M}^M \Prob\Big(\bigl|\sum_{k=1}^n A_{k,j}Z_{k-j}
I\{ |A_{k,j}Z_{k-j}|\leq \gamma_n M\vep \ \text{for all
    $k=1,\ldots ,n$}
$$
$$
\text{and $|j|\leq M$} \}>\gamma_n
\delta/(2M+1)\Big)\,.
$$
By the assumed independence, for every $|j|\leq M$,
$$
  \lim_{\vep \downarrow 0}\limsup_n r_n
\Prob\Big(\bigl|\sum_{k=1}^n A_{k,j}Z_{k-j}
I\{ |A_{k,j}Z_{k-j}|\leq \gamma_n M\vep \ \text{for all
    $k=1,\ldots ,n$}
$$
$$
\text{and $|j|\leq M$} \}>\gamma_n
\delta/(2M+1)\Big) =0\,,
$$
see the argument in Lemma 2.1 in \citep{HLMS05}. Therefore,
\eqref{smalljumpcondition}  follows.
\end{proof}

Finally, for certain symmetric  stochastic recurrence equations as in
Examples  \ref{ex:SRE1} and \ref{ex:SRE} we provide sufficient
conditions for the applicability of Theorem
\ref{partialsumthm}.
\begin{prop}\label{pr:symm.SRE}
Suppose that the i.i.d. pairs $(Y_k, Z_k)_{k \in \Z}$ are symmetric
(i.e. $(-Y_k, -Z_k)\eid (Y_k, Z_k)$), $Z \in \RV(\mu,\alpha)$ for some
$0<\alpha<2$ and
  $E\|Y\|^{\alpha + \vep} < 1$ for some $\vep > 0$. Then the
random recursion \eqref{e:SRE} has a unique stationary solution, and
it satisfies Theorem \ref{partialsumthm}.
\end{prop}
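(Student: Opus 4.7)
The plan is to verify each hypothesis of Theorem \ref{partialsumthm}: those of Theorem \ref{mt}, together with \eqref{smalljumpcondition} and \eqref{e:summ.A}. Existence and uniqueness of a stationary solution of \eqref{e:SRE} follow from Brandt's theorem, since $E\|Y\|^{\alpha+\vep}<1$ gives $E\log\|Y\|\leq(\alpha+\vep)^{-1}\log E\|Y\|^{\alpha+\vep}<0$ by Jensen, and $E\log^+|Z|<\infty$ by regular variation of $Z$. Joint symmetry yields $Z\eid -Z$, so $EZ=0$ when $\alpha>1$ (using $E|Z|<\infty$ for $\alpha>1$), providing the nontrivial part of hypothesis \eqref{h}.

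Next, the ``routine'' geometric conditions. Convexity of $s \mapsto E\|Y\|^s$ on $[0,\alpha+\vep]$, together with $E\|Y\|^0=1 > E\|Y\|^{\alpha+\vep}$, furnishes $\vep' \in (0,\min(\vep,\alpha))$ with $E\|Y\|^{\alpha\pm\vep'}<1$. Since $A_{k,j}=\Pi_{k-j+1,k}$ for $j\geq 0$ and $A_{k,j}=0$ for $j<0$, we have $E\|A_{k,j}\|^s \leq (E\|Y\|^s)^j$, making the relevant sums in \eqref{cc1} and (when $\alpha=1$) \eqref{cc1.5} geometric and convergent, with Minkowski handling the outer $L^p$-norm in the latter. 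Condition \eqref{gamman} is satisfied by any $\gamma_n=n^{1/\alpha+\eta}$ with $\eta>0$ small. For \eqref{e:summ.A}, Proposition \ref{pr:alpha.le1} covers $\alpha\leq 1$; for $1<\alpha<2$, Minkowski gives
\begin{equation*}
\bigl(E(\textstyle\sum_j \|A_{j,j}\|)^\alpha\bigr)^{1/\alpha} \leq \sum_j (E\|Y\|^\alpha)^{j/\alpha} < \infty,
\end{equation*}
using $E\|Y\|^\alpha \leq (E\|Y\|^{\alpha+\vep})^{\alpha/(\alpha+\vep)} < 1$, and this dominates $E\sup_J\|\sum_{j\in J} A_{j,j}\|^\alpha$.

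The heart of the proof is \eqref{smalljumpcondition}. For $0<\alpha<1$ it is immediate from Proposition \ref{pr:alpha.le1}. For $1\leq \alpha<2$ --- the main obstacle, and the only place where the symmetry of $(Y,Z)$ is essential --- I promote the truncated partial sum to a martingale. Setting $\mathcal{F}_{k-1}=\sigma(Y_j,Z_j:j\leq k-1)$, the independence of $(Y_k,Z_k)$ from $\mathcal{F}_{k-1}$, combined with $(Y_k,Z_k)\eid(-Y_k,-Z_k)$, gives, conditionally on $\mathcal{F}_{k-1}$,
\begin{equation*}
X_k = Y_k X_{k-1} + Z_k \;\eid\; -Y_k X_{k-1} - Z_k = -X_k,
\end{equation*}
so $X_k\mid\mathcal{F}_{k-1}$ is symmetric about zero. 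The bounded odd map $x\mapsto x I\{|x|\leq\gamma_n\vep\}$ therefore has vanishing conditional mean, making $M_n(\vep):=\sum_{k=1}^n X_k I\{|X_k|\leq\gamma_n\vep\}$ a martingale. By Chebyshev's inequality and orthogonality of martingale differences,
\begin{equation*}
r_n P\bigl(|M_n(\vep)|>\gamma_n\delta\bigr) \leq \frac{r_n\, n\, E\bigl(|X_0|^2 I\{|X_0|\leq\gamma_n\vep\}\bigr)}{(\gamma_n\delta)^2}.
\end{equation*}
By Theorem \ref{trs}, $X_0$ is regularly varying of index $\alpha<2$, so Karamata's theorem bounds the numerator by $C(\gamma_n\vep)^2 P(|X_0|>\gamma_n\vep)$; using $P(|X_0|>\gamma_n\vep) \sim C'\vep^{-\alpha} P(|Z|>\gamma_n)$ and $r_n\, n\, P(|Z|>\gamma_n)=1$, the right-hand side is $O(\vep^{2-\alpha}/\delta^2)$, which vanishes as $\vep\downarrow 0$ since $\alpha<2$. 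Without the symmetry hypothesis the truncated sum would fail to be a martingale, and one would need to subtract and control a random centering at the large-deviation scale, which appears substantially more delicate; symmetry bypasses this difficulty entirely.
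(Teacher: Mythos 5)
Your proof is correct, and its skeleton matches the paper's: existence/uniqueness and the moment conditions of Theorem \ref{mt} via the geometric decay of $E\|\Pi_{1,j}\|^{s}$ (using convexity of $s\mapsto E\|Y\|^{s}$), Proposition \ref{pr:alpha.le1} for $0<\alpha<1$, the triangle inequality in $L^{\alpha}$ to get \eqref{e:summ.Aabs} (hence \eqref{e:summ.A}) when $\alpha\geq 1$, and a truncated second-moment/Chebyshev/Karamata bound for \eqref{smalljumpcondition} when $1\leq\alpha<2$. The one genuine difference is how the cross terms in $E\bigl|\sum_{k}X_kI\{|X_k|\leq\gamma_n\vep\}\bigr|^2$ are killed. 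The paper asserts the unconditional symmetrization identity $(X_k)\eid(\epsilon_k X_k)$ with Rademacher signs independent of $(X_k)$ and uses it to replace the squared sum by $\sum_k E[X_k^2 I\{|X_k|\leq\gamma_n\vep\}]$; you instead note that the causal solution makes $X_{k-1}$ measurable with respect to $\mathcal{F}_{k-1}=\sigma(Y_j,Z_j: j\leq k-1)$, that $(Y_k,Z_k)$ is independent of $\mathcal{F}_{k-1}$, and that the pair symmetry renders the conditional law of $X_k$ given $\mathcal{F}_{k-1}$ symmetric, so the truncated sums are martingales and the cross moments vanish by orthogonality of the increments. Your route is, if anything, the more careful one: the pair symmetry immediately yields only this conditional symmetry, and conditional symmetry by itself does not in general entail the joint identity $(X_k)\eid(\epsilon_kX_k)$ (the conditional law of $X_k$ may genuinely depend on the sign of $X_{k-1}$, e.g.\ when $Y_k$ is an odd function of $Z_k$), whereas it does deliver directly the equality of second moments that the paper's display actually uses. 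Everything downstream (Karamata applied to $|X_0|$, whose tail is comparable to $P(|Z|>\cdot)$ by Theorem \ref{trs} since $A_{0,0}$ is the identity, and $r_n n P(|Z|>\gamma_n)=1$) is the same in both arguments; your aside about choosing a specific $\gamma_n$ is unnecessary, since \eqref{gamman} is a hypothesis on whatever normalizing sequence is used and your estimates hold for any such sequence.
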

\begin{proof}
Existence and uniqueness of a stationary solution follows from
Corollary 2.3 in \cite{HS08I}, which also shows that this solution is
of the form \eqref{x} and satisfies the assumptions of Theorem
\ref{mt}. For $0<\alpha<1$ the statement follows from Proposition
\ref{pr:alpha.le1}. For $\alpha\geq 1$ we have by convexity (see Lemma
3.3.1 in \cite{kwapien:woyczynski:1992})
$$
 \E\left( \sum \| A_{j}\|\right)^\alpha
=  \E\left( \sum_{j=0}^\infty \| \prod_{i=0}^j Y_i\|\right)^\alpha
$$
$$
\leq  \E\left( \sum_{j=0}^\infty  \prod_{i=0}^j \| Y_i\|\right)^\alpha
\leq \left[ \sum_{j=0}^\infty \left( \E\prod_{i=0}^j \|
  Y_i\|^\alpha\right)^{1/\alpha}\right]^\alpha <\infty
$$
since $E\|Y\|^\alpha < 1$. Therefore, \eqref{e:summ.Aabs} holds. Further,
the symmetry assumption in the proposition guarantees that the
stationary process $(X_k)$ is symmetric in the sense that $(X_k)\eid
(\epsilon_kX_k)$, where $(\epsilon_k)$ is a sequence of
i.i.d. Rademacher random variables independent of $(X_k)$. We conclude
as in the proof of Proposition \ref{pr:alpha.le1}
\begin{align*}
    \lim_{\vep \downarrow 0}& \limsup_n r_n \Prob\Big(\Bigr|\sum_{k=1}^n
    X_kI\{|X_k| \leq \gamma_n \vep\}\Bigl| > \gamma_n \delta\Big) \\
    & \leq \lim_{\vep \downarrow 0}\limsup_n r_n (\gamma_n
    \delta)^{-2}E\Big(\bigl|\sum_{k=1}^n X_kI\{|X_k| \leq \gamma_n
    \vep\}\bigr|\Big)^2\\
     & = \lim_{\vep \downarrow 0}\limsup_n r_n (\gamma_n
    \delta)^{-2}   E\Big(\sum_{k=1}^n X_k^2I\{|X_k| \leq \gamma_n
    \vep\}\Big)\\
    &= \lim_{\vep \downarrow 0}\limsup_n r_n n (\gamma_n
    \delta)^{-2}E X_k^2I\{|X_k| \leq \gamma_n\vep\}\\
   &= \lim_{\vep \downarrow 0}\limsup_n r_n n (\gamma_n
    \delta)^{-2} C (\gamma_n \vep)^2 \Prob(|X_k| > \gamma_n\vep) \\
   & = \lim_{\vep \downarrow 0} \limsup_n C\, \frac{\vep^2 \Prob(|X_k| >
      \gamma_n\vep)}{\delta^2 \Prob(|Z|>\gamma_n)}\\
   &= \lim_{\vep \downarrow 0} C\, \frac{\vep^{2-\alpha}}{\delta^2} = 0,
  \end{align*}
proving \eqref{smalljumpcondition}.

\end{proof}

\section{Ruin probabilities}

In this section we consider the univariate ($d = p = 1$) ruin problem
based on the sequence $(X_k)$ in \eqref{x}. Throughout this section we
assume that $\alpha >
1$ (which requires, according to our assumptions, that $E Z = 0$), and
let $c >0$ be the ``drift''. We are interested in deriving the
asymptotic decay of the so-called infinite horizon ruin probability
\begin{align*}
  \psi(u) = P\Big(\sup_n (S_n - c n) >  u\Big)
\end{align*}
as $u \to \infty$. Here $S_n = X_1 + \dots + X_n$ is the partial sum
process.


As in Section \ref{sec:ps}, we will need to assume extra technical
conditions, mostly in order to control the contributions of the
small jumps to the ruin probability. We start with some notation. For integer
$q\geq -1$ let
\begin{equation} \label{e:trunc.below}
\hat X^q_k =     \sum_{|j|>q} A_{k,j}Z_{k-j}, \, k\in\Z\,.
\end{equation}
We assume that, for each $q \geq -1$ and each $\delta > 0$,
\begin{equation} \label{smalljumpcondition.abs2q}
  \lim_{\vep \downarrow 0}\limsup_n r_n \Prob\Big(\Big|\sum_{k=1}^n
  \bigl(|\hat X^q_k|-E|\hat X^q_0|\bigr)I\{|\hat X^q_k| \leq n \vep\}\Big| >
n \delta\Big) =   0,
\end{equation}
and that  for every  $q \geq 0$ and $\gamma>0$,
\begin{align}\label{smalljumpcondition.abs2q2}
\lim_{\delta\to 0} \limsup_{n\to\infty}
  \frac{P\Big(\sup_{k \leq n}\bigl|\sum_{i=1}^k X^q_iI\{|X^q_i| \leq n
  \delta\}\bigr| > n\gamma\Big)}{nP(|Z|> n)} = 0.
\end{align}
It is easy to check that condition \eqref{smalljumpcondition.abs2q}
holds, for example, under the assumptions of Proposition
\ref{pr:finite.sum}. Sufficient conditions for
\eqref{smalljumpcondition.abs2q2} are given in Lemma \ref{l:trunc.low}
below.

\begin{thm} \label{t:ruin.pr}
Suppose that the
  conditions of Theorem \ref{mt} hold with $\alpha > 1$.  Suppose,
  additionally, that
  \eqref{smalljumpcondition.abs2q}, \eqref{smalljumpcondition.abs2q2},
  and \eqref{e:summ.Aabs} hold. Then 
  \begin{align}\label{univruin}
   & \lim_{u \to \infty} \frac{\psi(u)}{uP(|Z| > u)} = \\
    & \quad \quad\quad E\Big[w \Big(\sup_{j \in \Z} \sum_{k=-\infty}^j
    A_{k,k}\Big)^\alpha + (1-w) \Big(\sup_{j \in \Z}
    \sum_{k=-\infty}^j
    -A_{k,k}\Big)^\alpha\Big] \frac{1}{c(\alpha-1)}. \nonumber
  \end{align}
\end{thm}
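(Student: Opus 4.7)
The strategy is the classical one-big-jump heuristic, implemented via the point process result Theorem \ref{mt} and two successive truncations: one in the lag index $q$ of the series defining $X_k$, and one in the time horizon $n$. Write $X_k = X^q_k + \hat X^q_k$ with $X^q_k = \sum_{|j|\leq q} A_{k,j}Z_{k-j}$ and $\hat X^q_k$ as in \eqref{e:trunc.below}, and let $S^q_n=\sum_{k\leq n}X^q_k$. The first step is to show
\[
\lim_{q\to\infty}\limsup_{u\to\infty}\frac{\bigl|\psi(u)-P(\sup_n(S^q_n-cn)>u)\bigr|}{u\,P(|Z|>u)} = 0,
\]
by splitting each $\hat X^q_k$ into its small part $\{|\hat X^q_k|\le n\vep\}$ (controlled by \eqref{smalljumpcondition.abs2q}, since the centered partial sums of $\hat X^q$ then become negligible on scale $n$) and its large part $\{|\hat X^q_k|>n\vep\}$ (controlled by Theorem \ref{trs} together with \eqref{e:summ.Aabs}, which forces $E(\sum_{|j|>q}\|A_{0,j}\|)^\alpha \to 0$ as $q\to\infty$). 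Transferring control from a single partial sum to the running supremum uses a L\'evy-type maximal inequality and otherwise follows the pattern of Remark \ref{rk:joint.lim}.

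Next fix $M>0$. Since $EX^q_0=0$ and the drift is $-c<0$, a Nagaev-type bound for the centered heavy-tailed random walk $S^q_n-cn$ shows that the tail $\{n>Mu\}$ contributes $o(u\,P(|Z|>u))$ as $M\to\infty$. On the remaining window $n=\lfloor Mu\rfloor$, with $\gamma_n=n$, the event $\{\sup_{k\le n}(S^q_k-ck)>u\}$ coincides with $\{\Phi_{q,M}(N_n^q)>1/M\}$ for a functional $\Phi_{q,M}$ of the point measure $N_n^q$ in \eqref{Nn}. The support of $m^q$ consists of configurations where all points share one time coordinate $\tau=k^*/n\in[0,1]$ and are generated by a single $Z_{k^*}$ of normalized size $z$; by stationarity of the diagonal sequence $(A_{j,j})_{j\in\Z}$, the cluster's partial sums then take the form $z\sum_{l=-q}^{j}A_{l,l}$, so to leading order
\[
\Phi_{q,M}(\xi) = z\sup_{j\in\{-q,\ldots,q\}}\sum_{l=-q}^{j}A_{l,l}\;-\;c\tau.
\]
That the residual small-jump part of $S^q_n$ cannot perturb this supremum on scale $u$ is precisely what \eqref{smalljumpcondition.abs2q2} supplies, while Remark \ref{rk:cts.on.sphere} rules out the $m^q$-null discontinuity boundary of $\Phi_{q,M}$. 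The continuous mapping theorem applied to Theorem \ref{mt}, after integrating the one-big-jump measure $d\tau\otimes E[\mu(dz)]$ using \eqref{mu}, $\alpha$-homogeneity, and the change of variables $s=M\tau$, then gives
\[
\lim_{u\to\infty}\frac{P\bigl(\sup_{k\le Mu}(S^q_k-ck)>u\bigr)}{u\,P(|Z|>u)} = \Big[w\,E(M_+^q)^\alpha + (1-w)\,E(M_-^q)^\alpha\Big]\int_0^M(1+cs)^{-\alpha}\,ds,
\]
with $M_\pm^q=\sup_{j\in[-q,q]}\sum_{l=-q}^{j}(\pm A_{l,l})$.

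Finally, sending $M\to\infty$ gives $\int_0^\infty(1+cs)^{-\alpha}\,ds = 1/(c(\alpha-1))$, and sending $q\to\infty$ upgrades $M_\pm^q$ to $\sup_j\sum_{l\le j}(\pm A_{l,l})$, with dominated convergence in $L^\alpha$ secured by \eqref{e:summ.Aabs}, producing \eqref{univruin}. The main obstacle is the lag-truncation step, where one must uniformly control the tail of the linear-process series on an unbounded time horizon at the large-deviation scale; a secondary difficulty is verifying $m^q$-a.e.\ continuity of the running-supremum functional in the vague topology on $\Np^q$, which as above rests on Remark \ref{rk:cts.on.sphere}.
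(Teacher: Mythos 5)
Your strategy --- one big jump via Theorem~\ref{mt}, continuous mapping of a running-supremum functional of the point process, and a double truncation in the lag index $q$ and the time horizon $M$ --- is the same route the paper takes, and the final computation with the change of variables and $\int_0^\infty(1+cs)^{-\alpha}\,ds = 1/(c(\alpha-1))$ is the right calculation. However, the order in which you carry out the two truncations does not work as stated, and this is precisely where the subtlety of the proof lies. Your first step asks for
\[
\lim_{q\to\infty}\limsup_{u\to\infty}\frac{\bigl|\psi(u)-P(\sup_n(S^q_n-cn)>u)\bigr|}{u\,P(|Z|>u)} = 0
\]
\emph{before} the time horizon is bounded. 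But $\hat S^q_n=\sum_{k\le n}\hat X^q_k$ is a centered (mean-zero) partial sum process with no drift of its own, so $\sup_n \hat S^q_n=+\infty$ a.s.\ for nondegenerate $\hat X^q$, and no L\'evy-type maximal inequality can compensate for the unbounded time range. If you instead peel off a fraction of the drift and try to bound $P(\sup_n(\hat S^q_n-\vep cn)>u\vep)$, you need a ruin estimate for the truncated-tail process $\hat X^q$ --- which is exactly the type of result being proved, and whose hypotheses \eqref{smalljumpcondition.abs2q}--\eqref{smalljumpcondition.abs2q2} are assumed for $X$ and $X^q$ but not for $\hat X^q$; the argument becomes circular.

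The paper resolves this by truncating the time horizon first: Lemma~\ref{lem1a} disposes of $\sup_{k>[u]M}$ using the partial-sum large deviations of Theorem~\ref{partialsumthm} and Remark~\ref{abs} (note this is not a classical Nagaev bound for independent summands, which would not apply to the dependent $X_k$). Once $k\le [u]M$ is fixed, the lag truncation error $\sup_{k\le [u]M}\hat S^q_k$ is a finite-time supremum, crudely bounded by $\sum_{k\le [u]M}|\hat X^q_k|$; then \eqref{partialsumabs.1} and the fact that $E|\hat X^q_0|\to 0$ as $q\to\infty$ (from \eqref{e:summ.Aabs}) make this $o(uP(|Z|>u))$ with $M$ held fixed. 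Thus the correct order of limits is $u\to\infty$, then the small-jump parameters $\delta,\gamma\to 0$ and $q\to\infty$, then $\vep\to 0$, then $M\to\infty$; not $u$, then $M$, then $q$ as you propose. A secondary gap: applying the mapping theorem to $N_n^q$ itself does not let you certify that an entire cluster has been captured; the paper enlarges the window to dimension $q'=6q+1$ so that the functional $h^*$ of \eqref{e:funct} can check (through the boundary conditions defining $H_\delta(\xi)$) that the cluster is interior to the observation window, which is also what drives the $m^{q'}$-a.e.\ continuity of $h^*$ established in Lemma~\ref{l:funct.cts}.
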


\begin{exmp}[iid] In the iid case $A_{k,j} = I\{j=0\}$ and we get the
  classical result (see e.g.\ \cite{EKM97})
  \begin{align*}
    \lim_{u \to \infty} \frac{\psi(u)}{uP(|Z| > u)} =
    \frac{w}{c(\alpha - 1)}.
  \end{align*}
\end{exmp}
 \begin{exmp}[SRE] Consider a univariate SRE of the Examples
 \ref{ex:SRE1} and
      \ref{ex:SRE}. Assume that the i.i.d. pairs $(Y_k, Z_k)_{k \in
        \Z}$ are symmetric, and that \eqref{smalljumpcondition.abs2q}
      holds. Put
    \begin{align*}
     M_+ &= \sup_{j \geq 0}\Big(\sum_{k=0}^j Y_1 \cdots Y_{k}\Big)^+,\\
     M_- &= \sup_{j \geq 0}\Big(\sum_{k=0}^j Y_1 \cdots Y_{k}\Big)^-.
   \end{align*}
 Then
   \begin{align*}
     \lim_{u \to \infty} \frac{\psi(u)}{uP(|Z| > u)}
     &= \Big(w\E[M_+^\alpha] +
     (1-w)\E[M_-^\alpha]\Big)\frac{1}{c(\alpha-1)}\,.
   \end{align*}
 This result is believed to be new.
 \end{exmp}

\begin{proof}
For $q\geq 0$ we define a counterpart to \eqref{e:trunc.below} by
\begin{align*}
  X^q_k = \sum_{|j| \leq q} A_{k,j}Z_{k-j}, \, k\in\Z\,,
\end{align*}
and let
\begin{align*}
  S^q_n = X^q_1 + \dots + X^q_n, \quad \hat S^q_n = \hat X^q_1 +
  \dots + \hat X^q_n,\, n=1,2,\ldots.
\end{align*}
  Let $R$ denote the right-hand-side of \eqref{univruin}.
  The first step is to prove the upper bound
  \begin{align}\label{eq:univub}
    \limsup_{u \to \infty} \frac{\psi(u)}{uP(|Z|>u)} \leq R.
  \end{align}
  For (a large) integer $M=1,2,\dots$, $\psi(u)$ is bounded above by
  \begin{align*}
    &\Prob\Big(\sup_{k \leq [u]M} (S_k - c k) > [u]\Big)
    + \Prob\Big(\sup_{k > [u]M} (S_k - ck)>  [u]\Big) \\
    &=: p^{(11)}_M(u) + p^{(12)}_M(u).
  \end{align*}
  By Lemma \ref{lem1a}
  \begin{align*}
    \lim_{M\to\infty}\limsup_{u \to
      \infty}\frac{p^{(12)}_M(u)}{uP(|Z|>u)} = 0,
  \end{align*}
  so the main contribution comes from $p^{(11)}_M(u)$.
  For any $\vep > 0$ and any integer $q \geq 0$, we have the upper bound
  \begin{align*}
    p^{(11)}_M(u) &\leq \Prob\Big(\sup_{k \leq [u]M} (S_k^q - c k) >
    [u](1-\vep)\Big) + \Prob\Big(\sup_{k \leq [u]M} \hat
    S^q_k > [u] \vep\Big).
  \end{align*}
  It follows from Remark \ref{abs} and assumptions
  \eqref{smalljumpcondition.abs2q} and \eqref{e:summ.Aabs} that
  \begin{align*}
    \lim_{q \to \infty}\lim_{u \to 0}\frac{P(\sup_{k \leq [u]M} \hat S^q_k >
      [u]\vep)}{uP(|Z| > u)} = 0.
  \end{align*}
  It remains to show
  \begin{align} \label{ubnd}
    \lim_{M\to \infty} \lim_{\vep \to 0} \limsup_{q \to \infty} \limsup_{u
      \to \infty} \frac{\Prob\Big(\sup_{k \leq [u]M} (S_k^q - c k) >
    [u](1-\vep)\Big)}{uP(|Z| > u)} \leq R.
  \end{align}
  Putting $n = [u]M$ and taking $0<\gamma<1$, and a small $\delta >
  0$, we see that
  \begin{align}
    &\Prob\Big(\sup_{k \leq [u]M} (S_k^q - c k) >
    [u](1-\vep)\Big) \nonumber \\ & \quad = \Prob\Big(\sup_{k \leq n} n^{-1}(S^q_k
    - ck) > (1-\vep)M^{-1},\sup_{k \leq n}\bigl| \sum_{i=1}^k
    X^q_iI\{|X^q_i| \leq     n \delta\}\bigr| \leq
    n\gamma\Big) \label{ubndmain} \\
    &\quad +
    \Prob\Big(\sup_{k \leq n } n^{-1}(S^q_k - ck) > (1-\vep)M^{-1},
    \sup_{k\leq n}\bigl| \sum_{i=1}^k X^q_iI\{|X^q_i| \leq n
    \delta\}\bigr| > n\gamma\Big). \nonumber
\end{align}
Notice that, by the regular variation and
\eqref{smalljumpcondition.abs2q2},
 for every $M$ (recall $n = [u]M$) and $0<\gamma<1$
\begin{align}\label{eq:nosmalljumps1}
\lim_{\delta\to 0} \limsup_{u\to\infty}
  \frac{P\Big(\sup_{k \leq [u]M}\bigl|\sum_{i=1}^k X^q_iI\{|X^q_i| \leq n
  \delta\}\bigr| > n\gamma\Big)}{uP(|Z|> u)} = 0.
\end{align}
Hence, we are left with estimating \eqref{ubndmain}.



Since each noise variable $Z$ affects at most $2q+1$ values of the
process $( X^q)$, it follows from  the
obvious fact that for every $\delta>0$,
$$
\lim_{n\to\infty}\frac{P\bigl(|Z_j|>n\delta \ \ \text{for at least two
    different   $j=-q,\ldots, n+q$}\bigr)}{nP(|Z|>n)}=0,
$$
and Remark 4.1 in \cite{HS08I}, that
\begin{equation} \label{e:unl.event}
\Prob(D_n) := \Prob\bigl( |X_{j_i}^q|>n\delta \ \ \text{for $j_1,j_2 =
  1,\ldots, n$, $|j_1-j_2|>2q$}\bigr) = o\bigl( nP(|Z|>n)\bigr).
\end{equation}
We conclude by \eqref{e:unl.event} and \eqref{eq:nosmalljumps1} that
for the upper bound we need to prove that
\begin{equation} \label{e:upper.new1}
\lim_{M\to\infty}\lim_{\vep \to 0} \limsup_{q \to \infty}
      \lim_{\gamma\to 0}\limsup_{\delta \to 0}\limsup_{u \to \infty}
      \frac{p_n(\delta)}{uP(|Z|> u)}\leq R\,.
\end{equation}
Here $p_n(\delta)$ is a modification of the probability in
\eqref{ubndmain}, defined as follows.

For $n\geq 1$ and $\delta>0$ we denote
\begin{align*}
K_\delta(n) = \inf\{ i=1,\ldots, n:\, |X_i|>n \delta\}\,,
\end{align*}
defined to be equal to $n+1$ if the infimum is taken over the empty
set. Then we set
\begin{align*}
p_n(\delta) =
\Prob\bigg( \sup_{0\leq k\leq 2q}
n^{-1}\sum_{i=K_\delta(n)}^{K_\delta(n)+k} X_i -c K_\delta(n)>
(1-\vep)M^{-1}-2\gamma\biggr)\,.
\end{align*}
This puts us in a situation where we can use the large deviations for
point processes in Theorem \ref{mt} and the mapping theorem in Lemma
\ref{mapthm}.

Let $q^\prime = 6q+1$. This will correspond to the dimension of the
point processes we will work with. Specifically, $q^\prime$ is
the number of values of the process we are keeping track of in
\eqref{Nn}, and we will use the statement of Theorem \ref{mt} in the
space $\bfM_0(\Np^{q^\prime})$. We define now a functional
$h^\ast:\, \Np^{q^\prime} \to \bbr$ as follows. Let
\begin{align*}
\xi = \sum_{k\in\bbz} \delta_{(t_k,x_k^{(1)},\ldots,
  x_k^{(q^\prime)})} \, \in \Np^{q^\prime}\,.
\end{align*}
Consider all points $\bigl(t_k,x_k^{(1)},\ldots,x_k^{(q^\prime)}\bigr)$
of $\xi$ satisfying the following two conditions:
\begin{enumerate}
\item  for some $m=2q+1,\ldots, q^\prime-2q$, $|x_k^{(m)}|>\delta$;
\item  $|x_k^{(j)}|\leq \delta$ for all $j=1,\ldots, 2q$ and all 
  $j=q^\prime -2q+1,\ldots, q^\prime$.
\end{enumerate}
Note that, by the definition of the space $\Np^{q^\prime}$, the set
$H_\delta(\xi)$ of such points is finite. If $H_\delta(\xi)=\emptyset$, we
set $h^\ast(\xi)=0$.

With the obvious convention for the expression $k\in H_\delta(\xi)$, we
set, for each such $k$,
\begin{align*}
m_k= \min\bigl\{ m=2q+1,\ldots, q^\prime-2q:\, |x_k^{(m)}|>\delta\bigr\}\,,
\end{align*}
and define
\begin{align*}
h\bigl( t_k,x_k^{(1)},\ldots,x_k^{(q^\prime)}\bigr)
= \max_{j=0,1,\ldots, 2q}\Bigl(
\sum_{i=m_k}^{m_k+j}x_k^{(i)}\Bigr)-ct_k\,.
\end{align*}
Finally, we define
\begin{equation} \label{e:funct}
h^\ast(\xi) = \max_{k\in H_\delta(\xi)} h\bigl(
t_k,x_k^{(1)},\ldots,x_k^{(q^\prime)}\bigr)\,.
\end{equation}
It follows from  Lemma \ref{l:funct.cts} that the measure $m^{q^\prime}$
in Theorem \ref{mt} assigns zero value to the set of discontinuities
of $h^\ast$.

By the mapping theorem (Lemma \ref{mapthm}), we conclude that for any
$\tau>0$,
\begin{equation} \label{e:use.conv}
r_n\Prob\bigl( h^\ast( N_n^{q^\prime})>\tau\bigr) =
m_n^{q^\prime}\bigl( h^\ast( N_n^{q^\prime})>\tau\bigr)
\to m^{q^\prime}\bigl\{ \xi:\,  h^\ast( \xi)>\tau\bigr),
\end{equation}
using the fact the right hand side of \eqref{e:use.conv} is
continuous in $\tau>0$.

Taking now into account the definition of $r_n$, the estimate
\eqref{e:unl.event} and the fact that $n=[u]M$, one obtains from
\eqref{e:use.conv} that
$$
\limsup_{u \to \infty} \frac{p_n(\delta)}{uP(|Z|> u)}
\leq M^{-(\alpha-1)} m^{q^\prime}\Bigl\{ \xi:\,  h^\ast( \xi)>
(1-\vep)M^{-1}-2\gamma\Bigr\},.
$$

It follows from the form of the limiting measure $m^{q^\prime}$ in the
one-dimensional case (see \eqref{mu}) that for any $\tau>0$,
$$
\lim_{\delta\to 0} m^{q^\prime}\Bigl\{ \xi:\,  h^\ast(
\xi)>\tau\Bigr\}
= \frac{1}{c(\alpha-1)} \bigl( \tau^{-(\alpha-1)} -
(c+\tau)^{-(\alpha-1)}\bigr)
$$
$$
E\Big[w \Big(\max_{j =0,1,\ldots, 2q}
  \sum_{k=-q}^{-q+j}  A_{k,k}\Big)_+^\alpha + (1-w) \Big(\max_{j
    =0,1,\ldots, 2q } \sum_{k=-q}^{-q+j} -A_{k,k}\Big)_+^\alpha\Big]\,,
$$
from which we see that
$$
\limsup_{q \to \infty}
      \lim_{\gamma\to 0}\limsup_{\delta \to 0}\limsup_{u \to \infty}
      \frac{p_n(\delta)}{uP(|Z|> u)}
$$
$$
\leq  M^{-(\alpha-1)} \bigl[ \bigl((1-\vep)M\bigr)^{\alpha-1} -\bigl(
  c+((1-\vep)M^{-1})\bigr)^{-(\alpha-1)}\bigr] R\,,
$$
from which \eqref{e:upper.new1} follows. This proves the upper bound
\eqref{eq:univub}.

The lower bound requires a similar estimate. Take $\vep > 0$ and let
$u$ be sufficiently large that $([u]+1)/[u] < 1+\vep$.
For (a large) integer $M=1,2,\ldots$ we have
\begin{align*}
  \psi(u) &\geq P(\sup_{k} (S_k - ck) > [u]+1) \\
  &\geq  P(\sup_{k \leq [u]M} (S_k - ck) > [u](1+\vep)) \\
  &\geq  P(\sup_{k \leq [u]M} (S_k^q - ck) > [u](1+2\vep)) \\
  & \quad - P(\sup_{k \leq [u]M} \hat S_k^q  > [u]\vep).
\end{align*}
Hence, by Remark \ref{abs} and assumptions
  \eqref{smalljumpcondition.abs2q} and \eqref{e:summ.Aabs}, it is
  sufficient to prove  that
\begin{align*}
  \lim_{M\to \infty} \lim_{\vep \to 0} \liminf_{q \to \infty}
    \liminf_{u \to \infty} \frac{\Prob\Big(\sup_{k \leq [u]M}
    (S_k^q - c k) > [u](1+2\vep)\Big)}{uP(|Z| > u)} \geq R.
\end{align*}
Using \eqref{eq:nosmalljumps1} again it is sufficient to consider
\begin{align} \label{lbnd}
  \frac{\Prob\Big(\sup_{k \leq [u]M} n^{-1}(\sum_{i=1}^k X_iI\{n \delta <|X_i|<
  n/\delta\} - ck) >
    (1+2\vep)/M+\gamma\Big)}{uP(|Z| > u)},
\end{align}
and the argument from here is the same as in the case of the upper
bound.
\end{proof}

Below are the lemmas used in the proof of Theorem \ref{t:ruin.pr}.
\begin{lem}\label{lem1a} Under the assumptions of Theorem
  \ref{t:ruin.pr}
\begin{align*}
\lim_{M\to\infty} \limsup_{u\to\infty}\dfrac{\Prob(\sup_{k > u\, M}
  (S_k - ck) > u)}{u\, \Prob(|Z| > u)}=0\,.
\end{align*}
\end{lem}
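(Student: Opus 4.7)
The plan is to bound the probability via a dyadic decomposition in $k$, reducing to a maximal-tail estimate for the partial sums that can be summed geometrically using the regular variation of $Z$ and the assumption $\alpha>1$. First I would write
\begin{align*}
P\Big(\sup_{k > uM}(S_k - ck) > u\Big) \leq \sum_{j=0}^\infty P\Big(\max_{2^j uM < k \leq 2^{j+1} uM}(S_k-ck) > u\Big).
\end{align*}
On the $j$-th block, the event $S_k-ck>u$ forces $S_k > ck \geq c\,2^j uM$, so each summand is at most $P(\max_{k\leq n_j}S_k > (c/2)n_j)$, with $n_j := 2^{j+1}uM$.

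The next step is to invoke the auxiliary maximal estimate
\begin{align*}
\limsup_{n\to\infty}\frac{P\bigl(\max_{k\leq n}S_k > \rho n\bigr)}{nP(|Z|>n)} \leq C\rho^{-\alpha}
\end{align*}
uniformly in $\rho\geq\rho_0>0$. Applied with $n=n_j$ and $\rho=c/2$, and combined with the regular variation $P(|Z|>n_j) \sim (2^{j+1}M)^{-\alpha}P(|Z|>u)$ (valid for each fixed $M$ as $u\to\infty$), this gives that the $j$-th dyadic term is bounded, for $u$ large, by
\begin{align*}
C'\,n_j\,P(|Z|>n_j) \leq C''\,uM^{1-\alpha}\,2^{j(1-\alpha)}\,P(|Z|>u).
\end{align*}
Summing the geometric series in $j$ (which converges because $\alpha>1$), dividing by $uP(|Z|>u)$, and letting first $u\to\infty$ and then $M\to\infty$, the overall bound is of order $M^{1-\alpha}\to 0$, giving the lemma.

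The main obstacle is establishing the uniform maximal estimate. I would do this by truncating $S_k = S_k^q + \hat S_k^q$ exactly as in the proof of Theorem \ref{t:ruin.pr}: the tail contribution $\hat S_k^q$ is controlled by Remark \ref{abs} together with \eqref{smalljumpcondition.abs2q} and \eqref{e:summ.Aabs}, which keeps it negligible once $q$ is sent to infinity after $u$. The truncated $S_k^q$ is a partial sum of a $(2q+1)$-dependent stationary sequence, so grouping the summands into non-overlapping blocks of length exceeding $2q+1$ (with short gap blocks) reduces the supremum over $k$ to the supremum of partial sums of independent block sums. For these an Etemadi/Ottaviani-type maximal inequality yields $P(\max_{k\leq n}S_k^q > \rho n) \leq 3\max_{k\leq n} P(S_k^q > (\rho/3)n)$, and the right-hand side is controlled by the single-time large-deviation asymptotics supplied by Theorem \ref{partialsumthm} with $\gamma_n=n$, delivering the required $O(nP(|Z|>\rho n))$ bound uniformly in $\rho\geq\rho_0$.
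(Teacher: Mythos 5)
Your dyadic reduction and the Potter-bound summation over $j$ are fine, and your treatment of the tail part $\hat S^q$ matches what the paper itself does in the proof of Theorem \ref{t:ruin.pr}; the gap is in the maximal estimate for the truncated part. First, $(X^q_k)$ is \emph{not} a $(2q+1)$-dependent sequence: the coefficient array $(\Abold_k)$ is only assumed stationary, not independent across $k$ (in Example \ref{ex:SRE1} one has $A_{k,j}=Y_{k-j+1}\cdots Y_k$, so the coefficients for different $k$ share the same factors $Y_i$), and hence $X^q_k$ and $X^q_{k'}$ are in general dependent at all lags. Finite-range dependence holds only conditionally on $\Abold$, and that is exactly where the Etemadi/Ottaviani step breaks down: applied conditionally it gives $P\bigl(\max_{k\le n}S^q_k>\rho n\mid \Abold\bigr)\le 3\max_{k\le n}P\bigl(S^q_k>\rho n/3\mid \Abold\bigr)$, and after taking expectations you are left with $E\bigl[\max_{k\le n}P(\,\cdot\mid\Abold)\bigr]$, which is not $3\max_{k\le n}P(S^q_k>\rho n/3)$; so Theorem \ref{partialsumthm} cannot be invoked as you claim, and replacing the maximum by the sum over $k$ costs an extra factor of $n$. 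Second, you cannot fall back on the crude bound $\sup_{k\le n_j}S^q_k\le\sum_{k\le n_j}|X^q_k|$ inside your blocks: with ratio $2$ the threshold $(c/2)n_j$ need not exceed $n_j\,E|X^q_1|$, so the centered large deviation \eqref{partialsumabs.1} gives nothing unless $E|X_1|<c/2$, which is not part of the hypotheses.

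This is precisely why the paper's own proof avoids any maximal inequality. It uses geometric blocks $[Mu\beta^{j-1},Mu\beta^{j}]$ with $\beta>1$ chosen so close to $1$ (and an auxiliary constant $A>1$) that $E|X_1|<c(1-1/A)/(\beta-1)$; on each block the event is split into ``$S_{\lceil Mu\beta^{j-1}\rceil}$ already exceeds $cMu\beta^{j-1}/A$'', which is handled by the one-point large deviation of Theorem \ref{partialsumthm} together with Remark \ref{unif}, and ``the increment over the block exceeds $cMu\beta^{j-1}(1-1/A)$'', which is dominated by the sum of absolute values over the short block and handled by \eqref{partialsumabs.1}; the bounds $C(M\beta^{j-1})^{-(\alpha-1)}$ are then summed over $j$. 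If you insist on a genuine maximal estimate with blocks of fixed ratio, the hypothesis designed for uniform-in-$k$ control is \eqref{smalljumpcondition.abs2q2} (combined with a single-large-$Z$ argument as in \eqref{e:unl.event}), not an $m$-dependence/Etemadi argument; as written, your step from the conditional structure of $(X^q_k)$ to Theorem \ref{partialsumthm} does not go through.
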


\begin{proof}
  We use  Theorem \ref{partialsumthm} and  Remark \ref{unif} and \ref{abs} with
  $\gamma_n\equiv n$. Choose $\beta>1$ and $A>1$ such that
  \begin{equation} \label{e:beta}
    E|X_1|<\frac{c(1-1/A)}{\beta-1}\,,
  \end{equation}
  and write
  \begin{align*} 
    \Prob(\sup_{k > u\, M}   (S_k - ck) > u)
    \leq \sum_{j=1}^\infty \Prob\Bigl( S_k>ck \ \ \text{for some
      $Mu\beta^{j-1}\leq k\leq Mu\beta^{j}$}\Bigr)\,.
  \end{align*}
  By stationarity of $(X_k)$, for every $j=1,2,\ldots$,
  \begin{align*}
    \Prob\Big( S_k>ck \ \ \text{for some
      $Mu\beta^{j-1}\leq k\leq Mu\beta^{j}$}\Big)
    \leq \Prob\Big( S_{\lceil Mu\beta^{j-1}\rceil}>
    \frac{cMu\beta^{j-1}}{A}\Big)& \\
    + \Prob\Big( S_k>ck + cMu\beta^{j-1}(1-1/A)
    \ \ \text{for some
      $0\leq k\leq Mu\bigl(\beta^{j} - \beta^{j-1}\bigr)$}\Big)&\,.
  \end{align*}
  Using Theorem \ref{partialsumthm} 
  we see that for some positive constant
  $C$ (that, as usual, may change in the sequel) we have, for $u$
  large enough,
  \begin{align*}
  \Prob\Big( S_{\lceil Mu\beta^{j-1}\rceil}>
  \frac{cMu\beta^{j-1}}{A}\Big)
  \leq C Mu\beta^{j-1} P(|Z| > Mu\beta^{j-1})
  \end{align*}
  and, by Potter's bound,  for $M$ large enough,
  \begin{align*}
    \frac{  P(|Z| > Mu \beta^{j-1})}{P(|Z| > u)} \leq C (M\beta^{j-1})^{-\alpha}\,.
  \end{align*}
  It follows that
  \begin{align*}
    \limsup_{u\to\infty}\dfrac{\sum_{j=1}^\infty \Prob\Big( S_{\lceil
        Mu\beta^{j-1}\rceil}> \frac{cMu\beta^{j-1}}{A}\Big)}{u\, \Prob(|Z| >
      u)}
    \leq CM^{-(\alpha-1)}\,.
  \end{align*}
  Using the fact that
  $\alpha>1$, we let $M\to\infty$ and see that the above expression
  converges to zero.
  
  Furthermore, for every $j=1,2,\ldots$,
  \begin{align*}
    & \Prob\Big( S_k>ck + cMu\beta^{j-1}(1-1/A)
    \ \ \text{for some
      $0\leq k\leq Mu\bigl(\beta^{j} - \beta^{j-1}\bigr)$}\Big)\\
    & \quad \leq \Prob\Big( \sum_{k=0}^{Mu(\beta^{j} -
      \beta^{j-1})}|X_k|>cMu\beta^{j-1}(1-1/A)\Big) \\
    & \quad \leq \Prob\Bigl(\frac{1}{Mu(\beta^{j} -
      \beta^{j-1})} \!\!\!\!\! \sum_{k=0}^{Mu(\beta^{j} -
      \beta^{j-1})} \!\!\!\!\!(|X_k|-E|X_1|) >
    \frac{c(1-1/A)}{\beta-1}- E|X_1|\Bigr).
    \end{align*}
  By the choice of $\beta$ and $A$ as in \eqref{e:beta} and the
  assuption \eqref{smalljumpcondition.abs2q}, we can use the
  large deviations result \eqref{partialsumabs.1}, to conclude that,
  just as above,
  for all $M$ large enough,
  \begin{align*}
    &\limsup_{u \to \infty} \frac{\Prob\Bigl( S_k>ck + cMu\beta^{j-1}(1-1/A)
    \ \ \text{for some
      $0\leq k\leq Mu\bigl(\beta^{j} -
      \beta^{j-1}\bigr)$}\Bigr)}{uP(|Z|>u)}\\ 
  & \quad \leq C\bigl( M \beta^{j-1}\bigr)^{-(\alpha-1)}\,,
\end{align*}
  and, as before, these bounds can be summed up over $j$ and, then, one lets
  $M\to \infty$  and uses the fact that $\alpha>1$. This proves the
  statement of the lemma.
\end{proof}

\begin{lem} \label{l:funct.cts}
Under the assumptions of Theorem   \ref{t:ruin.pr}, the measure
$m^{q^\prime}$ in Theorem \ref{mt}  does not charge the  set of
discontinuities of $h^\ast$ in \eqref{e:funct}.
\end{lem}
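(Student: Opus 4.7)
The plan is to identify the set of discontinuities of $h^*$ with an event already known to have zero $m^{q'}$-measure by Remark \ref{rk:cts.on.sphere}. Specifically, let
\begin{align*}
B_\delta = \Bigl\{\xi \in \Np^{q'}:\, \xi\bigl([0,1] \times \{(x^{(1)},\ldots,x^{(q')}):\, |x^{(i)}|=\delta \text{ for some } i\}\bigr) > 0 \Bigr\}\,.
\end{align*}
Applying Remark \ref{rk:cts.on.sphere} with $a=\delta$ gives $m^{q'}(B_\delta)=0$, so it will suffice to show that $h^*$ is continuous at every $\xi \notin B_\delta$.

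To carry this out, I would fix $\xi \notin B_\delta$ and a sequence $\xi_n \to \xi$ vaguely in $\Np^{q'}$. First, I would observe that each atom counted in $H_\delta(\xi)$ has at least one coordinate of absolute value strictly greater than $\delta$, hence lies in a compact subset of $\Eb^{q'}$ bounded away from the origin; by the Radon property of $\xi$ the set $H_\delta(\xi)$ is finite. Next, standard consequences of the vague topology on $\Np(\Eb^{q'})$ give that on any compact subset of $\Eb^{q'}$ bounded away from the origin, the atoms of $\xi_n$ converge in position (with correct multiplicity) to the atoms of $\xi$ for large $n$.

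The hypothesis $\xi \notin B_\delta$ is exactly what is needed to promote this convergence of positions into stability of the set $H_\delta(\cdot)$: since no atom of $\xi$ has any coordinate of absolute value exactly $\delta$, for large $n$ the strict inequalities $|x^{(i)}|>\delta$ and $|x^{(j)}|\leq\delta$ entering the definition of $H_\delta$ are all preserved at each atom of $\xi_n$ that is close to an atom of $\xi$. Consequently $H_\delta(\xi_n)$ is in bijection with $H_\delta(\xi)$ for large $n$, the minimizing indices $m_k$ stabilize, the partial sums $\sum_{i=m_k}^{m_k+j}x_k^{(i)}$ vary continuously with the atom, and so does $-ct_k$. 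Hence $h$ evaluated at each atom of $\xi_n$ converges to its value at the corresponding atom of $\xi$, and, $h^*$ being a maximum over finitely many such continuously varying quantities, $h^*(\xi_n)\to h^*(\xi)$.

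The main technical obstacle is the careful bookkeeping for the one-to-one matching of atoms under vague convergence in the enlarged state space $\Eb^{q'}$, especially the argument that the avoidance of the threshold sphere $\{|x^{(i)}|=\delta\}$ combined with vague convergence yields the coordinate-wise preservation of strict inequalities. This is a standard but tedious unwinding of the topology on $\Np(\Eb^{q'})$ (as in \cite{R87}), and once it is in place the zero-measure statement follows immediately from the inclusion of the discontinuity set in $B_\delta$.
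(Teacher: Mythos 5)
Your proposal is correct and follows essentially the same route as the paper: both proofs use Remark \ref{rk:cts.on.sphere} with $a=\delta$ to reduce to point measures charging no atom with a coordinate of absolute value exactly $\delta$, and then deduce continuity of $h^\ast$ at such $\xi$ from the convergence of atoms under vague convergence, the stabilization of the finite set $H_\delta(\xi_n)$, and the continuity of $h$ together with the finiteness of the maximum. The only cosmetic difference is that the paper treats the case $H_\delta(\xi)=\emptyset$ explicitly, which your bijection statement covers implicitly.
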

\begin{proof}
Let $\Xi$ be the subset of $\Np^{q^\prime}$ consisting of point
measures $\xi$ such that
$$
\xi \big([0,1] \times \big\{(x_0,\dots,x_{q^\prime}):\,
|x_i|=\delta, \, \text{ some }  i\in\{0,\dots,q^\prime\}\big\}
\big)=0\,.
$$
According to Remark \ref{rk:cts.on.sphere}, the  measure
$m^{q^\prime}$ is concentrated on the set $\Xi$, and so it is enough
to prove that the functional $h^\ast$ is continuous at each
$\xi\in\Xi$. Let $(\xi_n)$ be a sequence in $\Np^{q^\prime}$ such that
$\xi_n \vague \xi$. If $H_\delta(\xi)=\emptyset$, then
$H_\delta(\xi_n)=\emptyset$ for all $n$ large enough, and so
$h^\ast(\xi_n)=0\to 0=h^\ast(\xi)$.

Suppose now that $H_\delta(\xi)\not=\emptyset$. By the definition of
the set $\Xi$ we see that for all $n$ large enough (say, $n\geq n_0$),
the cardinality of
$H_\delta(\xi_n)$ is equal the (finite) cardinality of
$H_\delta(\xi)$. Moreover, the vague convergence $\xi_n \vague \xi$
implies that, for every $n\geq n_0$  there is an enumeration
$\bigl\{\bigl( (t_k)^n,(x_k^{(1)})^n,\ldots,(x_k^{(q^\prime)})^n\bigr)\bigr\}$
of $H_\delta(\xi_n)$ such that for every $k\in H_\delta(\xi)$,
$$
\bigl( (t_k)^n,(x_k^{(1)})^n,\ldots,(x_k^{(q^\prime)})^n\bigr)
\to \bigl( t_k,x_k^{(1)},\ldots,x_k^{(q^\prime)}\bigr)
$$
componentwise as $n\to\infty$ (see \cite{R87}). Therefore, for each
such $k$,
$$
h\bigl( (t_k)^n,(x_k^{(1)})^n,\ldots,(x_k^{(q^\prime)})^n\bigr)
\to h\bigl( t_k,x_k^{(1)},\ldots,x_k^{(q^\prime)}\bigr),
$$
and, since the set $H_\delta(\xi)$ is finite, we conclude that
$h^\ast(\xi_n)\to h^\ast(\xi)$, as required.
\end{proof}

Finally, as promised, we provide sufficient conditions for 
\eqref{smalljumpcondition.abs2q2}. 

\begin{lem} \label{l:trunc.low}
Assume the hypothesis of Theorem  \ref{mt}. If $1 < \alpha \leq  2$,
then for every  $q \geq 0$ and $\gamma>0$,
\begin{align*}
\lim_{\delta\to 0} \limsup_{n\to\infty}
  \frac{P\Big(\sup_{k \leq n}\bigl|\sum_{i=1}^k X^q_iI\{|X^q_i| \leq n
  \delta\}\bigr| > n\gamma\Big)}{nP(|Z|> n)} = 0.
\end{align*}
If $\alpha > 2$ the conclusion holds if 
additionally, for some
$\beta > \alpha - 1$ and all $-q \leq j \leq q$,  
\begin{align}\label{A.additional.assumption}
  E A_{0,j}^{2\beta} < \infty.
\end{align}  
\end{lem}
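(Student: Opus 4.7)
\textbf{Proof sketch for Lemma \ref{l:trunc.low}.}

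The plan is to combine a maximal inequality with Karamata-type moment estimates for truncated regularly varying variables. The key structural observation is that $X^q_i$ depends only on $A_i=(A_{i,j})_{|j|\leq q}$ and $Z_{i-q},\ldots,Z_{i+q}$, so, conditional on $\mathcal{F}_A=\sigma(A_{k,j}:k,j\in\bbz)$, the sequence $(X^q_i)_i$ is $(2q+1)$-dependent and admits a splitting into $2q+1$ arithmetic progressions on each of which $(X^q_i)$ is conditionally independent.

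First I would write $\hat Y_i=X^q_i I\{|X^q_i|\leq n\delta\}$ and center. Because $\alpha>1$ and $EZ=0$ one has $EX^q_0=0$, so $E\hat Y_0=-E[X^q_0 I\{|X^q_0|>n\delta\}]$, and Karamata gives $|E\hat Y_0|\leq C\, n\delta\, P(|X^q_0|>n\delta)$. With $\gamma_n=n$ this makes the aggregated mean contribution $n|E\hat Y_0|$ of order $n^{2-\alpha}\delta^{1-\alpha}L(n\delta)$, which is $o(n\gamma)$ for $\alpha>1$; the centering correction can therefore be absorbed into the probability estimate. The same reasoning handles the conditional mean $g(A_i)=E[\hat Y_i|\mathcal{F}_A]$ via the tail-moment bound $|g(A_i)|\leq E[|X^q_i|\,I\{|X^q_i|>n\delta\}\,|\,\mathcal{F}_A]$, combined with stationarity of $(A_i)$ and a Markov estimate that uses the moment assumptions \eqref{cc1}--\eqref{cc2}.

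For $1<\alpha\leq 2$ I would then apply Kolmogorov's maximal inequality to each of the $2q+1$ conditionally independent sub-sums of $\hat Y_i-g(A_i)$, sum over sub-sums and take unconditional expectation to get
\[
P\Bigl(\sup_{k\leq n}\bigl|\sum_{i\leq k}(\hat Y_i-g(A_i))\bigr|>n\gamma/C_q\Bigr)\leq C_q\,\frac{E[(X^q_0)^2 I\{|X^q_0|\leq n\delta\}]}{n\gamma^2}.
\]
Karamata's truncated-second-moment estimate $E[(X^q_0)^2 I\{|X^q_0|\leq u\}]\sim Cu^{2-\alpha}L(u)$ (and a slowly varying variant at $\alpha=2$) makes the right-hand side, after dividing by $nP(|Z|>n)\sim n^{1-\alpha}L(n)$, asymptotic to $C\delta^{2-\alpha}/\gamma^2$, which vanishes as $\delta\downarrow 0$ since $2-\alpha>0$. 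The finiteness of $E(X^q_0)^2 I\{|X^q_0|\leq n\delta\}$ follows from \eqref{cc1}--\eqref{cc1.5} applied to the finite sum defining $X^q_0$.

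For $\alpha>2$ the variance $EX_0^{q\,2}$ is finite and the above bound is of order $1/(n^{2-\alpha}L(n)\gamma^2)$, which \emph{diverges} since $2-\alpha<0$; hence the second-moment inequality is insufficient. Instead, apply a $(2\beta)$-th moment (Rosenthal-type) inequality to each conditionally-independent sub-sum:
\[
E\Bigl[\max_k\bigl|\sum_{i\leq k}(\hat Y_i-g(A_i))\bigr|^{2\beta}\Big|\mathcal{F}_A\Bigr]\leq C_{q,\beta}\Bigl(n^{\beta}\bigl(E[\hat Y_0^2|\mathcal{F}_A]\bigr)^{\beta}+n E[|\hat Y_0|^{2\beta}|\mathcal{F}_A]\Bigr).
\]
The extra hypothesis $EA_{0,j}^{2\beta}<\infty$ from \eqref{A.additional.assumption}, together with Karamata $E[|Z|^{2\beta} I\{|Z|\leq u\}]\sim Cu^{2\beta-\alpha}L(u)$ for $2\beta>\alpha$, makes $E|\hat Y_0|^{2\beta}\lesssim(n\delta)^{2\beta-\alpha}L(n\delta)$. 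Markov's inequality combined with dividing by $nP(|Z|>n)$ then produces two terms, of orders $\sigma^{2\beta}/(n^{\beta+1-\alpha}L(n)\gamma^{2\beta})$ and $\delta^{2\beta-\alpha}L(n\delta)/(L(n)\gamma^{2\beta})$. The first vanishes as $n\to\infty$ because $\beta>\alpha-1$ forces $\beta+1-\alpha>0$; the second vanishes as $\delta\downarrow 0$ because $\beta>\alpha-1$ together with $\alpha>2$ gives $2\beta>2(\alpha-1)>\alpha$.

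The hardest step in practice is Step~1's control of $\sum_i g(A_i)$: one needs either an extra mixing/independence assumption on $(\Abold_k)$ (as in the i.i.d.\ case of Proposition~\ref{pr:finite.sum}) or a more subtle argument exploiting the smallness of $E|g(A_0)|$ together with the stationarity of $(A_i)$. The moment-and-maximal-inequality mechanism for the conditionally centered part (Steps~3 and~5) is routine once this centering obstacle is cleared, and the dichotomy at $\alpha=2$ is precisely what forces the additional moment condition \eqref{A.additional.assumption} in the statement.
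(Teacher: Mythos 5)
Your blocking-plus-maximal-inequality mechanism for the \emph{conditionally centered} part is essentially sound and runs parallel to the paper's martingale step (BDG for $1<\alpha<2$, a Rosenthal/Fuk--Nagaev-type bound using \eqref{A.additional.assumption} for $\alpha>2$). The genuine gap is exactly the step you flag and defer: the drift $\sup_{k\le n}\bigl|\sum_{i\le k}g(A_i)\bigr|$ with $g(A_i)=E[\hat Y_i\mid\mathcal F_A]$. The lemma assumes only stationarity of $(\Abold_k)$, and the remedy you gesture at (``a Markov estimate\ldots combined with stationarity'', or else importing independence as in Proposition \ref{pr:finite.sum}) does not close it: bounding the supremum by $\sum_{i\le n}|g(A_i)|$ and using Markov gives $P(\cdot>n\gamma)\le E|g(A_0)|/\gamma$, and since $|g(A_0)|\le E[|X^q_0|I\{|X^q_0|>n\delta\}\mid\mathcal F_A]$, Karamata makes this of order $n\delta\,P(|X^q_0|>n\delta)/\gamma$; after dividing by $nP(|Z|>n)$ you are left with a quantity of order $\delta^{1-\alpha}/\gamma$, which \emph{diverges} as $\delta\to0$ rather than vanishing. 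Higher powers do not help without further work: with only stationarity, $E\bigl(\sum_i|g(A_i)|\bigr)^r\le n^rE|g(A_0)|^r$, and making this useful requires conditional Karamata/Potter bounds with an integrable random constant, i.e.\ moments of the $A$'s and a uniformity argument you have not supplied. Assuming independence of the $(\Abold_k)$ instead would prove a different lemma. A secondary soft spot: at $\alpha=2$ your Chebyshev bound produces the ratio of two slowly varying functions, $\tilde L(n\delta)/L(n)$ with $\tilde L(u)=E[Z^2I\{|Z|\le u\}]$-type, which need not stay bounded, so the ``slowly varying variant'' is not routine there.

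The paper's proof avoids the conditional-mean obstacle by never centering the self-truncated variables at all. It first writes $X^q_i=\sum_{|j|\le q}A_{i,j}Z_{i-j}$ and treats each $j$ separately; then, choosing $\theta<\delta/(2q+1)$ and a small $\rho$, it shows that off the event $C_n=\{|A_{i,j}Z_{i-j}|>n\theta$ for some $i,j\}$ the indicator $I\{|X^q_i|\le n\delta\}$ is automatic, that $P(C_n\cap\{\max_m|Z_m|\le n\rho\})/nP(|Z|>n)$ can be made small by taking $\rho$ small relative to $\theta$, and that the single-large-$Z$ event is harmless because one large $Z_m$ enters at most $2q+1$ terms whose retained contribution is below $(2q+1)n\delta<n\gamma$. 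This reduces everything to $\sup_k\bigl|\sum_{i\le k}A_{i,0}Z_iI\{|Z_i|\le n\rho\}\bigr|$, where the centering constant is the \emph{deterministic} and regularly-varying-small $m_{n\rho}=E[ZI\{|Z|\le n\rho\}]\sim Cn\rho P(|Z|>n\rho)$; the drift $m_{n\rho}\sum_{i\le k}A_{i,0}$ is then killed by a crude $p$th-moment Markov bound with $p>\alpha$ (see \eqref{eq:twoterms}), using only stationarity of the $A$'s, and the centered sum is a genuine martingale given the $A$'s, handled by BDG or Fuk--Nagaev (where \eqref{A.additional.assumption} enters through $W_n=\sum_iA_{i,0}^2$; at $\alpha=2$ the required moment is already implied by \eqref{cc1.5}). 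To salvage your scheme you would have to import this change of truncation — truncate the noise, not the process — or supply the missing conditional tail-moment argument; as written, the centering step does not go through under the stated hypotheses.
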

\begin{proof}
Write
\begin{align*}
  & P\Big(\sup_{k \leq n}\Big|\sum_{i=1}^k X^q_iI\{|X^q_i| \leq n
  \delta\}\Big| > n\gamma\Big)\\
  & \quad \leq \sum_{|j|\leq q} P\Big(\sup_{k \leq n}\Big|\sum_{i=1}^k
    A_{i,j}Z_{i-j} I\{|X^q_i| \leq n
    \delta\}\Big| > \frac{n\gamma}{2q+1}\Big).
\end{align*}
We replace below, for simplicity, $\gamma/(2q+1)$ with $\gamma$. Since
the above sum has a finite number of terms, it is enough to prove the
appropriate convergence to zero for each one of the terms
separately. For simplicity we consider $j=0$. Denote
\begin{align*}
  B_n & = \Bigl\{ \sup_{k \leq n}\bigl|\sum_{i=1}^k
  A_{i,0}Z_{i} I\{|X^q_i| \leq n   \delta\}\bigr| > n\gamma\Bigr\}, 
\end{align*}
so that we can write for a small $\rho>0$
\begin{align*}
  & P\Big(\sup_{k \leq n}\Big|\sum_{i=1}^k
    A_{i,0}Z_{i} I\{|X^q_i| \leq n   \delta\}\Big| > n\gamma\Big) \\
  & \quad = P\Big(B_n\cap \big\{ |Z_m|\leq n\rho \ \ \text{for all
    $m=1-q,\ldots, n+q$}\big\}\Big)\\
  & \qquad +  P\Big( B_n\cap \big\{ |Z_m|> n\rho \ \ \text{for  exactly one
    $m=1-q,\ldots, n+q$}\big\}\Big)\\
  & \qquad +  P\Big( B_n\cap \big\{ |Z_m|> n\rho \ \ \text{for two or more
    $m=1-q,\ldots, n+q$}\big\}\Big)\\
  & \quad := p_1(n)+p_2(n)+p_3(n)\,.
\end{align*}
Clearly, for every $\rho>0$,
\begin{align*}
\lim_{n\to\infty} \frac{p_3(n)}{nP(|Z|> n)} = 0.
\end{align*}
Next, select $0<\theta<\delta/(2q+1)$, and introduce the event
\begin{align*}
C_n = \Bigl\{ |A_{i_1,j}Z_{i_1-j}|>n\theta \ \ \text{for some
  $i_1=1,\ldots, n, \, |j|\leq q$}\Bigr\}.
\end{align*}
Then
\begin{align*}
p_1(n) & \leq P\Big( C_n\cap \big\{ |Z_m|\leq n\rho \ \ \text{for all
  $m=1-q,\ldots, n+q$}\big\}\Big)\\
& \quad + P\Big( B_n\cap C_n^c\cap \big\{ |Z_m|\leq n\rho \ \ \text{for all
  $m=1-q,\ldots, n+q$}\big\}\Big) \\
& := p_{11}(n) + p_{12}(n)\,.
\end{align*}
By stationarity,
\begin{align*}
 p_{11}(n) & \leq (n+2q) P\Big( |Z_0|\leq n\rho, \, \max_{|j|\leq
   q}|A_{j,j}Z_0|>n\theta\Big)\\
 &\leq (n+2q) P\Big( \max_{|j|\leq    q}|A_{j,j}|I\big\{  \max_{|j|\leq
  q}|A_{j,j}| >\theta/\rho\big\} |Z_0|>n\theta\Big)\,.
\end{align*}
Therefore,
\begin{align*}
  \lim_{n\to\infty} \frac{p_{11}(n)}{nP(|Z|> n)} & = \theta^{-\alpha}
  E\Bigl[\, \max_{|j|\leq    q}|A_{j,j}|^\alpha I\bigl\{  \max_{|j|\leq
    q}|A_{j,j}| >\theta/\rho\bigr\}\Bigr]\,,
\end{align*}
and this expression can be made arbitrarily small by selecting
$\rho$ small in comparison with $\theta$. Furthermore, the choice of
$\theta$ guarantees that, on the event $C_n^c$, one automatically
has $|X^q_i| \leq n   \delta$ for each $i=1,\ldots, n$. Therefore,
\begin{align*}
  p_{12}(n) \leq P\Big(\sup_{k \leq n}\Big|\sum_{i=1}^k
    A_{i,0}Z_{i} I\{ |Z_{i}|\leq n\rho\}\Big|>n\gamma\Big)
\end{align*}
Put $m_{n\rho} = E Z I\{|Z| \leq n\rho\}$, $\tilde S_k = \sum_{i=1}^k
A_{i,0}(Z_{i} I\{ |Z_{i}|\leq n\rho\}-m_{n\rho})$ and take $p >
\alpha$ such that $E |A_{0,0}|^p < \infty$.  Then
\begin{align}\label{eq:twoterms}
  & P\Big(\max_{k \leq n}\Big|\sum_{i=1}^k
  A_{i,0}Z_{i} I\{ |Z_{i}|\leq n\rho\}\Big|>n\gamma\Big) \nonumber \\
  & \quad \leq 
  P\Big(\max_{k \leq n}|\tilde S_k|>n\gamma/2\Big) + 
  P\Big(\max_{k \leq n} \Big|\sum_{i=1}^k A_{i,0}m_{n\rho}\Big| > n \gamma/2\Big)\,.
\end{align}

By Markov's inequality the second term in \eqref{eq:twoterms} is
bounded above by 
\begin{align*}
   P\Big(\sum_{i=1}^n |A_{i,0}| > \gamma n/2|m_{n\rho}|\Big) & \leq 
   \Big(\frac{n \gamma}{2 |m_{n \rho}|}\Big)^{-p}  E\Big(\sum_{i=1}^n
   |A_{i,0}|\Big)^p \\
   & \leq \Big(\frac{n \gamma}{2 |m_{n \rho}|}\Big)^{-p}  n^{p-1} E
   \sum_{i=1}^n |A_{i,0}|^p \\
   & = \Big(\frac{\gamma}{2 |m_{n \rho}|}\Big)^{-p} E|A_{0,0}|^p. 
\end{align*}

Since $p > \alpha > 1$, $E|A_{0,0}|^p < \infty$,  and $|m_{n\rho}| \sim C
n\rho P(|Z|> n\rho)$ it 
follows that 
\begin{align*}
    \limsup_{n \to \infty}\frac{P\Big(\sup_{k \leq n} \Big|\sum_{i=1}^k
      A_{i,0}m_{n\rho}\Big| > n \gamma/2\Big)}{nP(|Z| > n)}
    \leq \limsup_{n \to \infty} C \frac{[n \rho P(|Z| >
      n\rho)]^p}{nP(|Z|>n)} = 0.
\end{align*}

To handle the first term in \eqref{eq:twoterms} we divide into two
cases. For $1 < \alpha < 2$ we can take $\alpha < p < 2$ and use
the fact that $\tilde S_k$  is a martingale with respect to
$\mathcal{F}_k = \sigma(\{A_{i,0}\}_{i=1}^n, Z_1, \dots, Z_k)$. Then
the Burkholder-Davis-Gundy inequality implies that
\begin{align}\label{eq:secondrow}
  P\Big(\max_{k \leq n}|\tilde S_k|>n\gamma/2\Big)  &\leq 
  \frac{C}{(n\gamma)^p}E\Big([\tilde S]_n^{p/2}\Big) \nonumber \\ &\leq
  \frac{C}{(n\gamma)^p} E\Big(\sum_{i=1}^n |A_{i,0}|^p|Z_{i} I\{
  |Z_{i}|\leq n\rho\}-m_{n\rho}|^p \Big) \\
  &\leq
  \frac{C}{(n\gamma)^p} n E|A_{0,0}|^p (E|Z|^pI\{
  |Z|\leq n\rho\}+|m_{n\rho}|^p) \nonumber \\
  & \sim \frac{C}{(n\gamma)^p} n (E|A_{0,0}|^p (n\rho)^pP(|Z| > n \rho)
  +|m_{n\rho}|^p), \nonumber
\end{align}
where, in the last step, we use Karamata's theorem. 
In particular, 
\begin{align*}
  \lim_{\rho \to 0}\limsup_{n \to \infty}\frac{\frac{C}{(n\gamma)^p} n
    (E|A_{0,0}|^p (n\rho)^pP(|Z| > n \rho) 
  +|m_{n\rho}|^p)}{nP(|Z|>n)} =0. 
\end{align*}
For $\alpha \geq 2$ a variation of the  Fuk-Nagaev inequality (see
\cite{P95} 2.6.6, p.\ 
79) implies  
\begin{align*}
  P\Big(\max_{k \leq n}|\tilde S_k|>n\gamma/2\Big) 
  & = E\Big[ P\Big(\max_{k \leq n}|\tilde S_k|>n\gamma/2\Big|
  \{A_{i,0}\}\Big)\Big] \\ 
  &\leq E\Big[C_1 (n \gamma)^{-p} \sum_{i=1}^n |A_{i,0}|^p E|Z_iI\{|Z_i| \leq
  n\rho\}-m_{n\rho}|^p\Big] \\ & \quad + E\Big[\exp\Big\{-
  C_2 n^2 \Big(\sum_{i=1}^n A_{i,0}^2 \Var(ZI\{|Z| \leq n
  \rho\})\Big)^{-1}\Big\}.  
\end{align*}
The first of these terms can be bounded just as \eqref{eq:secondrow}
above.  To handle the second term we write $W_n := \sum_{i=1}^n
A_{i,0}^2$ and note that, since $\alpha\geq 2$, 
$\Var(ZI\{|Z| \leq n \rho\})$ is a slowly varying function (this
quantity 
is even bounded when $\Var(Z) < \infty$). Therefore, it is bounded by
$n^\vep$ for all $n$ sufficiently large, where we choose
$\vep$ to satisfy $\beta > \frac{\alpha-1}{1-\vep}$.  Then it follows
that for each $\lambda > 0$, 
\begin{align*}
  & E\Big[\exp\Big\{-
  C_2 n^2 \Big(W_n \Var(ZI\{|Z| \leq n
  \rho\})\Big)^{-1}\Big\} \leq 
  E\Big[\exp\Big\{-
  C_3\frac{n^{2-\vep}}{W_n}\Big\}\Big]
  \\ \quad &=  
  E\Big[\exp\Big\{- C_3\frac{n^{2-\vep}}{W_n}\Big\}
  I\{W_n \leq \lambda n^{2-\vep}/\log n\}\Big]\\ 
  & \qquad +  E\Big[\exp\{- C_3\frac{n^{2-\vep}}{W_n}\Big\}
  I\{W_n > \lambda n^{2-\vep}/\log n\Big]
  \\ \quad &\leq 
  n^{-C_3 \lambda} +  P(W_n > \lambda n^{2-\vep}/\log n).
\end{align*}
In particular we may choose $\lambda >(\alpha-1)/C_3$, which will
imply 
\begin{align*}
  \frac{n^{-C_3 \lambda}}{nP(|Z| > n)} \to 0,
\end{align*}
as $n \to \infty$. We also have, for large $n$, by the choice of
$\vep$, 
\begin{align*}
\frac{P(W_n > \lambda n^{2-\vep}/\log n)}{n P(|Z| > n)} & \leq 
\frac{n^{-\beta(2-\vep)}}{nP(|Z| > n)}E\Big(\sum_{i=1}^n
A_{i,0}^2\Big)^{\beta}\\
 & \leq
\frac{n^{-\beta(1-\vep)}}{nP(|Z| > n)}E A_{0,0}^{2\beta} \to 0,
\end{align*}
by assumption \eqref{A.additional.assumption}.

Finally, the term $p_2(n)$ can be treated in the same way as the term
$p_1(n)$, if one notices that the single large value of $Z_m$ can
contribute to at most $2q+1$ different $X_i$. If one chooses $\delta$
small enough so that $(2q+1)\delta<\gamma$, then these terms can be
excluded from the sum $\sum_{i=1}^k X^q_iI\{|X^q_i| \leq n
  \delta\}$ in the first place. Hence the statement of the lemma.
\end{proof}


\appendix

\section{Framework}

Let $\Eb$ be a locally compact complete separable metric space and
consider the space $\Np$ of Radon point measures on $\Eb$. In the
main part of the paper $\Eb$ will be the space $[0,1] \times
(\R^{d(q+1)} \setminus \{0\})$ for some $q \geq 0$ and $d\geq 1$,
but here it can be quite arbitrary. Let $(h_i)_{i\geq 1}$ be a
countable dense collection of
functions in $C_K^+(\Eb)$, the space of nonnegative continuous
functions on $\Eb$ with compact support, such that $\xi_n(h_i) \to
\xi(h_i)$ as $n\to \infty$ for each $i \geq 1$ implies $\xi_n \vague
\xi$ in $\Np$. Here $\vague$ denotes vague convergence. The existence
of such a sequence $(h_i)_{i \geq 1}$ is established by \cite{K83}
\citep[see also][Proposition 3.17]{R87}. Note also that the functions
$h_i$ may be chosen to be Lipschitz with respect to the metric on
$\Eb$. This follows from the fact that the approximating
functions in the version of the Urysohn lemma used for the purpose of
this construction are already Lipschitz \citep[see][Lemma
3.11]{R87}. In particular, a measure $\xi$ in $\Np$ is uniquely
determined by the sequence $(\xi(h_i))_{i\geq 1}$.  We may and will
assume that the collection $(h_i)_{i\geq 1}$ is closed under multiplication by
positive rational numbers.

We can identify $\Np$ with a closed subspace of
$[0,\infty)^\infty$ via the mapping $h: \Np \to
[0,\infty)^\infty$ given by  $h(\xi) = (\xi(h_i))_{i \geq 1}$. To
see that $h(\Np)$ is closed in $[0,\infty)^\infty$, let
$(x_i^n)_{i\geq 1}$ be a convergent sequence in $h(\Np)$. That is, $x_i^n \to
x_i$ for each $i$. Then there exist $\xi_n \in \Np$ such that
$\xi_n(h_i) = x_i^n$ for each $i\geq 1$. The collection
$(\xi_n)_{n\geq 1}$ is relatively compact
in $\Np$ because $\sup_n \xi_n(h_i) = \sup_n x_i^n < \infty$ for
each $i$. Hence, there is a convergent subsequence $\xi_{n_k} \to
\text{ some } \xi$. This $\xi$ necessarily satisfies $\xi(h_i) =
x_i$ and we conclude that $(x_i)_{i\geq 1} \in h(\Np)$. Thus,
$h(\Np)$ is closed.

The vague convergence on $\Np$ can be metrized via a metric $d$
induced from $[0,\infty)^\infty$, defined by
\begin{align}\label{metd}
  d(x,y) = \sum_{i=1}^\infty 2^{-i}\frac{|x_i - y_i|}{1+|x_i - y_i|},
\end{align}
for elements $x = (x_i)_{i\geq 1}$ and $y = (y_i)_{i\geq 1}$ in
$[0,\infty)^\infty$. This
makes $\Np$ into a complete separable metric space (since it is a
closed subspace of the complete separable metric space
$[0,\infty)^\infty$). The open ball of radius $r>0$
in $\Np$ centered at $\xi$ is denoted $B_{\xi,r}$. Recall that we
denote by $\xi_0$ the null measure in $\Np$.

We will consider convergence of Radon measures $m$ on the space
$\Np$. The framework considered here is that of \cite{HL06} where the
underlying space, denoted $\bfS$ by \cite{HL06}, is taken to be
$\Np$. The space of Radon measures on $\Np$ whose restriction to $\Np
\setminus B_{\xi_0,r}$ is finite for each $r > 0$ is denoted $\bfM_0 =
\bfM_0(\Np)$.  Convergence in $\bfM_0$ ($m_n \to m$) is defined as the
convergence $m_n(f) \to m(f)$ for all $f \in C_0(\Np)$, the space of
bounded continuous functions on $\Np$ that vanishes in a neighborhood
of ``the origin'' $\xi_0$.

The typical situation in this paper is that we have a sequence of
random point measures $(N_n)$ on $\Eb$, and we are interested in the
convergence
\begin{align*}
  m_n(\cdot) := r_n P(N_n \in \cdot) \to m(\cdot),\quad \text{ in } \bfM_0.
\end{align*}

\subsection{Convergence in $\bfM_0(\Np)$}

We start with  relative compactness criteria. For measures on
a general metric space such criteria are given in Theorem 2.7 in
\cite{HL06}.
\begin{thm} \label{rcnp}
  Let $M \subset \bfM_0(\Np)$. $M$ is relatively compact if
  \begin{itemize}
  \item[(i)] for each $\vep > 0$,
    \begin{align*}
      \sup_{m \in M} m\Big(\xi: \sum_{i=1}^\infty
    2^{-i}\frac{\xi(h_i)}{1+\xi(h_i)} > \vep\Big) < \infty,
  \end{align*}
  and
  \item[(ii)]  for each $h \in C_K^+(\Eb)$ and $\delta > 0$ there
    exists $R$ such that
    \begin{align*}
      \sup_{m \in M}m(\xi: \xi(h) > R) \leq
    \delta.
  \end{align*}
  \end{itemize}
\end{thm}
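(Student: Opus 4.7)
The plan is to reduce Theorem \ref{rcnp} to the general relative compactness criterion in $\bfM_0$ over a complete separable metric space (Theorem 2.7 of \cite{HL06}). That criterion states that $M \subset \bfM_0(\Np)$ is relatively compact provided that for every $r>0$: (a) the total masses $m(B_{\xi_0,r}^c)$ are uniformly bounded over $m \in M$; and (b) the restrictions $\{m|_{B_{\xi_0,r}^c} : m \in M\}$ form a tight family of finite measures on $B_{\xi_0,r}^c$. So the task splits into verifying these two items from the hypotheses (i)--(ii).

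For (a), I would use that $\xi_0$ is the null measure, so $\xi_0(h_i)=0$ for every $i$ and the metric in \eqref{metd} specializes to
\[
 d(\xi,\xi_0) \;=\; \sum_{i=1}^\infty 2^{-i}\,\frac{\xi(h_i)}{1+\xi(h_i)}.
\]
Hence the set appearing in (i) is literally $B_{\xi_0,\vep}^c$, and (i) is nothing but $\sup_{m\in M} m(B_{\xi_0,\vep}^c)<\infty$, giving (a).

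For (b), fix $r>0$ and $\eta>0$. I would apply (ii) once per element $h_i$ of the determining family, with tolerance $\eta/2^i$, to obtain constants $R_i$ such that $\sup_{m\in M} m\{\xi:\,\xi(h_i)>R_i\}\leq \eta/2^i$ for all $i\geq 1$, and then set
\[
 K \;=\; \bigcap_{i=1}^\infty \bigl\{\xi\in\Np:\,\xi(h_i)\leq R_i\bigr\}.
\]
The key verification is that $K$ is compact in $\Np$. Closedness is immediate since each map $\xi\mapsto\xi(h_i)$ is continuous on $\Np$ for $h_i\in C_K^+(\Eb)$, so each constraint set is closed. Relative compactness uses the closed embedding $h:\Np\hookrightarrow[0,\infty)^\infty$ established earlier in the Appendix: the image $h(K)$ is contained in the compact product $\prod_i [0,R_i]$ (product topology), hence is relatively compact, and because $h(\Np)$ is closed, every limit point of a sequence in $h(K)$ corresponds to some $\xi\in\Np$, so $K$ itself is relatively compact in $\Np$. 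A union bound then yields
\[
 \sup_{m\in M} m(K^c)\;\leq\;\sum_{i=1}^\infty \sup_{m\in M} m\{\xi(h_i)>R_i\}\;\leq\;\sum_{i=1}^\infty \eta/2^i\;=\;\eta,
\]
and taking $K\cap B_{\xi_0,r}^c$ as the compact exhausting set delivers (b).

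The main obstacle is the compactness of $K$ in $\Np$: this is the only step that genuinely uses structural features of $\Np$ (as opposed to formal manipulations of the metric). Here the work is carried out by leaning on the properties of the countable determining family $(h_i)$ and the closed embedding into $[0,\infty)^\infty$ that were set up earlier in the Appendix. Everything else is a direct translation of (i) into (a) and a routine geometric summation of the $h_i$-tail bounds from (ii).
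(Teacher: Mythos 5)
Your proposal is correct and follows essentially the same route as the paper's proof: both reduce to Theorem 2.7 of \cite{HL06}, identify the set in (i) with the complement of a ball around $\xi_0$ via the metric \eqref{metd}, and build the compact exhausting set $\{\xi:\,\xi(h_i)\leq R_i \text{ for all } i\}$ using the closed embedding of $\Np$ into $[0,\infty)^\infty$, with the $R_i$ chosen from (ii) so that a geometric union bound controls the mass outside. No substantive differences to note.
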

\begin{proof}
  We need to check (2.2) and (2.3) of Theorem 2.7 in
  \cite{HL06}. Since the metric on $\Np$ is given by \eqref{metd} (i)
  immediately implies (2.2) in that reference.

  Next note that any set of the form $\prod_{i=1}^\infty [0,R_i]$ is a
  compact subset of $[0,\infty)^\infty$. Hence, $C = \{\xi:
  \xi(h_i) \leq R_i \text{ for each } i\} \setminus
  B_{\xi_0,\vep}$ is a compact subset of $\Np \setminus B_{\xi_0,\vep}$
  and
  \begin{align*}
    \sup_{m \in M} m(\Np \setminus (B_{\xi_0,\vep} \cup C)) &\leq
    \sup_{m \in M} m(\xi:
    \xi(h_i) > R_i \text{ some } i \geq 1) \\ & \leq \sup_{m \in M}
    \sum_{i=1}^\infty m(\xi: \xi(h_i) > R_i).
  \end{align*}
  By (ii) we can take $R_i$ such that $\sup_{m\in M}m(\xi: \xi(h_i) > R_i) <
  2^{-i} \delta$, which implies (2.3) of \cite{HL06}.
\end{proof}

To show actual convergence, one needs, in addition to relative
compactness, to identify subsequential limits. For
this purpose we define for $g_1,g_2 \in C_K^+(\Eb)$,
$\vep_1,\vep_2 >
0$, a function
$F_{g_1,g_2,\vep_1,\vep_2}: \Np \to [0,\infty)$ by
\begin{align} \label{e:new.Laplace}
  F_{g_1,g_2,\vep_1,\vep_2}(\xi) =
  (1-\exp\{-(\xi(g_1)-\vep_1)_+\})
  (1-\exp\{-(\xi(g_2)-\vep_2)_+\})\,.
\end{align}
Note that each
$F_{g_1,g_2,\vep_1,\vep_2}$ is a bounded continuous function that
vanishes on a neighborhood of the null measure $\xi_0$.

\begin{lem} \label{l:equal.m}
Let $m_1$, $m_2$ be measures in $\bfM_0(\Np)$.  If
for all Lipschitz functions $g_1,g_2 \in C_K^+(\Eb)$,
$\vep_1,\vep_2 > 0$, one has $m_1(F_{g_1,g_2,\vep_1,\vep_2}) =
m_2(F_{g_1,g_2,\vep_1,\vep_2})$, then $m_1 = m_2$.
\end{lem}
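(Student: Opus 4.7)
Abbreviate $\phi_\vep(x):=1-e^{-(x-\vep)_+}$, so $F_{g_1,g_2,\vep_1,\vep_2}(\xi)=\phi_{\vep_1}(\xi(g_1))\phi_{\vep_2}(\xi(g_2))$. Note that $\phi_\vep(\xi(g))$ is supported on $\{\xi(g)>\vep\}$, a set bounded away from $\xi_0$, and is therefore $m_i$-integrable. My plan is to identify, for each Lipschitz $g_1\in C_K^+(\Eb)$ and each $\vep_1>0$, the finite Borel measure $\tilde m_i:=\phi_{\vep_1}(\xi(g_1))\,m_i$ on $\Np$ through its Laplace functional, and then to recover $m_i$ itself by letting $\phi_{\vep_1}(\xi(g_1))$ approach $1$ on $\Np\setminus\{\xi_0\}$.

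First I would prove the one-variable equality $m_1(\phi_{\vep_1}(\xi(g_1)))=m_2(\phi_{\vep_1}(\xi(g_1)))$ for every Lipschitz $g_1\in C_K^+$ and every $\vep_1>0$. Applying the hypothesis with $g_2=Kg_1$ and any fixed $\vep_2>0$, as $K\to\infty$ the factor $\phi_{\vep_2}(K\xi(g_1))$ increases monotonically to $1$ on $\{\xi(g_1)>0\}$, while on $\{\xi(g_1)\le\vep_1\}$ the whole product is identically zero. Dominated convergence with majorant $\phi_{\vep_1}(\xi(g_1))\le 1$ delivers the claim.

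Next, holding $g_1$ and $\vep_1$ fixed, I would let $\vep_2\downarrow 0$ in the hypothesis with $g_2$ fixed. Since $\phi_{\vep_2}(\xi(g_2))\uparrow 1-e^{-\xi(g_2)}$ pointwise, monotone convergence produces $m_1(\phi_{\vep_1}(\xi(g_1))(1-e^{-\xi(g_2)}))=m_2(\phi_{\vep_1}(\xi(g_1))(1-e^{-\xi(g_2)}))$. Subtracting this from the one-variable equality of the previous step yields $\tilde m_1(e^{-\xi(g_2)})=\tilde m_2(e^{-\xi(g_2)})$ for every Lipschitz $g_2\in C_K^+$, and hence for every $g_2\in C_K^+$ by uniform density of Lipschitz functions and dominated convergence. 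The classical uniqueness of the Laplace functional for finite Borel measures on the space of locally finite point measures (Kallenberg, \emph{Random Measures}) then forces $\tilde m_1=\tilde m_2$ on $\Np$.

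To conclude, I would pick any Borel $A$ bounded away from $\xi_0$; because $m_i\in\bfM_0(\Np)$ it suffices to show $m_1(A)=m_2(A)$. The sets $E_i:=\{\xi:\xi(h_j)=0\text{ for }j<i,\ \xi(h_i)>0\}$ are pairwise disjoint and exhaust $\Np\setminus\{\xi_0\}$: if $\xi(h_i)=0$ for all $i$ then the separating property of $(h_i)$, applied to the constant sequence $\xi_n\equiv\xi_0$, forces $\xi=\xi_0$. Applying the preceding step with $g_1=Kh_i$ gives
$$\int_{A\cap E_i}\phi_{\vep_1}(K\xi(h_i))\,dm_1=\int_{A\cap E_i}\phi_{\vep_1}(K\xi(h_i))\,dm_2,$$
and letting $K\to\infty$ dominated convergence (integrand bounded by $1$, $m_i(A)<\infty$) produces $m_1(A\cap E_i)=m_2(A\cap E_i)$. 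Countable additivity then gives $m_1(A)=\sum_i m_1(A\cap E_i)=\sum_i m_2(A\cap E_i)=m_2(A)$. The only subtle step is the third paragraph, where one must recognize that the given two-variable identity is rich enough to pin down the Laplace functional of the auxiliary finite measures $\tilde m_i$; the rest is routine dominated-convergence manipulation.
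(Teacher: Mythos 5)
Your proof is correct and follows essentially the same route as the paper's: scaling limits applied to the two-variable functional isolate the Laplace functional of the measures $m_i$ weighted by a function of $\xi(g_1)$, uniqueness of Laplace functionals on $\Np$ identifies these auxiliary finite measures, and a final limit removes the weight to yield equality of $m_1$ and $m_2$ on all Borel sets bounded away from $\xi_0$. The differences are only cosmetic: you work with the smooth weight $1-e^{-(\xi(g_1)-\vep_1)_+}$ and an explicit partition into the sets $E_i$, whereas the paper uses the indicator restriction $I\{\xi(h_{j_1})\geq \vep_1\}$, normalizes to probability measures before invoking Laplace-functional uniqueness, and concludes by letting $\vep\to 0$.
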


\begin{proof}
We use the assumption with $g_i=h_{j_i}$, $i=1,2$. Replacing
 $h_{j_1}$ by $bh_{j_1}$ and $\vep_{1}$ by $b\vep_{1}$ with
positive rational $b$, and let
$b\to\infty$ and $\vep_2\to 0$, we obtain
\begin{align}\label{laplace}
&\int_{\Np}I\{\xi(h_{j_1})\geq \vep_1\}e^{-\xi(h_{j_2})} m_1(d\xi)
\nonumber \\
&\quad = \int_{\Np}I\{\xi(h_{j_1})\geq \vep_1\}e^{-\xi(h_{j_2})}
m_2(d\xi)\,.
\end{align}
Replacing, in \eqref{laplace}, $h_{j_2}$ by $bh_{j_2}$ as above, and
letting $b\to 0$, we obtain also
\begin{align}\label{equal}
m_1(I\{\xi(h_{j_1})\geq \vep_1\})=
m_2(I\{\xi(h_{j_1})\geq \vep_1\}).
\end{align}

Since the family $(h_i)_{i\geq 1}$ is dense in  $C_K^+(\Eb)$, we
conclude that \eqref{laplace} holds with
$h_{j_2}$ replaced by any function in  $C_K^+(\Eb)$.
To see that \eqref{equal} and \eqref{laplace} imply $m_1 = m_2$ we
define, for any $j_1 \geq 1$ and $\vep_1 > 0$, probability measures
on $\Np$ by
\begin{align*}
  \tilde m_1(\cdot) &= \frac{m_1(\cdot \cap\{\xi: \xi(h_{j_1}) \geq
    \vep_1\})}{m_1(\xi: \xi(h_{j_1}) \geq \vep_1)} \\
  \tilde m_2(\cdot) &= \frac{m_2(\cdot \cap\{\xi: \xi(h_{j_1}) \geq
    \vep_1\})}{m_2(\xi: \xi(h_{j_1}) \geq \vep_1)}.
\end{align*}
The uniqueness property of the Laplace functionals
\citep[see][Section 3.2]{R87} \eqref{laplace} implies that $\tilde
m_1$ and $\tilde m_2$
coincide. Hence $m_1$ and $m_2$ coincide on the set $\{ \xi(h_j)\geq
\vep\}$ for any $j$ and $\vep$.  Letting $\vep\to 0$ we obtain the claim.
\end{proof}

Finally, we are ready to state necessary and sufficient conditions for
convergence in $\bfM_0(\Np)$.
\begin{thm}\label{sc}
  Let $m,m_1,m_2,\dots$ be measures in $\bfM_0(\Np)$.The condition
  \begin{align*}
    \lim_{n \to \infty} m_n(F_{g_1,g_2,\vep_1,\vep_2}) =
    m(F_{g_1,g_2,\vep_1,\vep_2})
  \end{align*}
  for all $g_1,g_2 \in C_K^+(\Eb)$, $\vep_1,\vep_2 > 0$,
  is necessary and sufficient for the convergence  $m_n
  \to m$ in $\bfM_0(\Np)$. Furthermore, it is sufficient to check the
  condition only for the Lipschitz functions in $C_K^+(\Eb)$.
\end{thm}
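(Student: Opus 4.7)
The plan is to handle necessity and sufficiency separately. Necessity is immediate: each $F_{g_1,g_2,\vep_1,\vep_2}$ is a bounded continuous function on $\Np$ vanishing on a neighborhood of the null measure $\xi_0$, so it belongs to $C_0(\Np)$, and $m_n\to m$ in $\bfM_0(\Np)$ gives the stated convergence automatically.

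For sufficiency, I would split the work into a relative compactness step and an identification step, using throughout only the countable Lipschitz family $(h_i)$. To verify condition (i) of Theorem \ref{rcnp}, I truncate the series defining $d(\xi,\xi_0)$ at a level $N=N(\vep)$ so that the tail contributes less than $\vep/2$; then $\{d(\xi,\xi_0)>\vep\}$ is covered by a finite union $\bigcup_{i\leq N}\{\xi(h_i)>\vep'_i\}$ for suitable $\vep'_i>0$, and on each event the pointwise bound $F_{h_i,h_i,\vep'_i/2,\vep'_i/2}\geq (1-e^{-\vep'_i/2})^2$ holds, yielding a uniform bound on $m_n(d(\xi,\xi_0)>\vep)$ from the hypothesized convergence of the finitely many Laplace-type integrals. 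For condition (ii), given $h\in C_K^+(\Eb)$, I would use density of $(h_i)$ to select a Lipschitz $h_{i_0}$ bounded below by a positive constant on $\mathrm{supp}(h)$, so that $\{\xi(h)>R\}\subseteq\{\xi(h_{i_0})>cR\}$ for some $c>0$; the inequality $F_{h_{i_0},h_{i_0},cR/2,cR/2}\geq (1-e^{-cR/2})^2\,I\{\xi(h_{i_0})>cR\}$ combined with the hypothesized convergence for this Lipschitz pair, and the fact that point measures in $\Np$ charge compactly supported continuous functions with finite mass, then delivers the required smallness as $R\to\infty$.

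With relative compactness established, I would identify any subsequential limit $\tilde m\in\bfM_0(\Np)$ as $m$. For Lipschitz $g_1,g_2$, $F_{g_1,g_2,\vep_1,\vep_2}\in C_0(\Np)$ gives $\tilde m(F_{g_1,g_2,\vep_1,\vep_2})=\lim_k m_{n_k}(F_{g_1,g_2,\vep_1,\vep_2})=m(F_{g_1,g_2,\vep_1,\vep_2})$, and Lemma \ref{l:equal.m} forces $\tilde m=m$, so the full sequence converges in $\bfM_0(\Np)$. The ``Lipschitz suffices'' clause falls out automatically because only Lipschitz test functions appeared in both the tightness and the identification steps. The main obstacle I anticipate is the Urysohn-type selection of $h_{i_0}$ in condition (ii): the density of $(h_i)$ in $C_K^+(\Eb)$ must be invoked to produce a Lipschitz function that dominates $h$ (up to a positive multiplicative constant) on its support, so that quantitative bounds transfer cleanly from the $F$-functionals back to the sets $\{\xi(h)>R\}$; once this approximation is in place, the remainder is elementary pointwise estimates combined with the hypothesized convergence.
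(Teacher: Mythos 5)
Your proposal is correct and follows essentially the same route as the paper: necessity is immediate, relative compactness is verified through the two conditions of Theorem \ref{rcnp} using truncation of the metric series and the lower bound $F_{h,h,R/2,R/2}(\xi)\geq (1-e^{-R/2})^2 I\{\xi(h)>R\}$, and subsequential limits are identified via Lemma \ref{l:equal.m}. The only (harmless) deviation is in condition (ii), where you dominate a general $h\in C_K^+(\Eb)$ by a Lipschitz member of the countable family, whereas the paper applies the bound with $h$ directly, noting that the compactness criterion is only ever invoked on the Lipschitz functions $h_i$.
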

\begin{proof}
The necessity of the condition is obvious. For the sufficiency
we start with checking that the sequence $(m_n)_{n\geq 1}$ is relatively
compact in $\bfM_0(\Np)$, for which we will check (i) and (ii) in Theorem
\ref{rcnp}.

Start by choosing a Lipschitz collection $(h_i)_{i \geq 1}$ as
above. Take $\vep > 0$. With
$J_\vep = \lceil -\log_2\vep\rceil +1$ we have
\begin{align*}
  m_n & \Big(\Big\{\xi: \sum_{i=1}^\infty 2^{-i}
      \frac{\xi(h_i)}{1+\xi(h_i)}\geq \vep\Big\}\Big)
      \leq m_n \Big(\Big\{ \xi:\, \sum_{i=1}^{J_\vep} 2^{-i}
 \frac{\xi(h_i)}{1+\xi(h_i)} > \frac{\vep}{3}\Big\}\Big) \\
& \leq \sum_{i=1}^{J_\vep} m_n \Big(\Big\{ \xi:\,
\frac{\xi(h_i)}{1+\xi(h_i)} > \frac{\vep}{3 J_\vep}\Big\}\Big)\\
& \leq \sum_{i=1}^{J_\vep} m_n \Big(\Big\{ \xi:\, \xi(h_i)
> \frac{1}{(3 J_\vep/\vep-1)}\Big\}\Big)\,.
\end{align*}
Note that, for any $h \in C_K^+(\Eb)$ and $R > 0$,
\begin{align} \label{e:R}
  m(F_{h,h,R/2,R/2}) &= \int (1-e^{-(\xi(h)-R/2)_+})^2 m(d\xi) \\ & \geq
  \int (1-e^{-(\xi(h)-R/2)_+})^2I\{\xi(h) > R\} m(d\xi) \\ &\geq
  (1-e^{-R/2})^2m(\xi: \xi(h) > R).
\end{align}

For $\vep>0$ we choose $R=R(\vep)= \frac{2}{3 J_\vep/\vep-1}$.
By the assumption of the proposition  there is
$n_1$ such that for
all $n\geq n_1$ the bound
$$
m_n(F_{h_i,h_i,R/2,R/2}) \leq
m(F_{h_i,h_i,R/2,R/2})+1
$$
holds for each $i=1,\ldots,
J_\vep$. It follows from \eqref{e:R} that for all such $n$,
\begin{align*}
& m_n \Big(\Big\{ \xi \in \Np:\, \sum_{i=1}^\infty 2^{-i}
\frac{\xi(h_i)}{1+\xi(h_i)} \geq \vep\Big\}\Big) \\
& \leq \big( 1-e^{-R(\vep)}\big)^{-2} \sum_{i=1}^{J_\vep}[
m(F_{h_i,h_i,R(\vep)/2,R(\vep)/2})+1]\,,
\end{align*}
which is finite, establishing (i) in Theorem \ref{rcnp}.

The next step is to check (ii) in Theorem \ref{rcnp}. For $h\in
C_K^+(\Eb)$ and $R > 0$ we have by \eqref{e:R}
\begin{align*}
  \limsup_{n\to\infty} m_n (\{ \xi:\,
      \xi(h) > R \})
  & \leq \limsup_{n\to\infty} \big( 1-e^{-R/2}\big)^{-2}
  m(F_{h,h,R/2,R/2}) \\
& = \big( 1-e^{-R/2}\big)^{-2} m(F_{h,h,R/2,R/2}) \,.
\end{align*}
The latter expression converges to
zero as $R \to\infty$, which implies (ii) in Theorem \ref{rcnp}.

We conclude that $(m_n)$ is relatively compact in $\bfM_0(\Np)$.

Since the assumptions of Lemma \ref{l:equal.m} are satisfied for any
subsequential vague limit point of the sequence $(m_n)$ and the measure $m$, we
conclude that all subsequential vague limit points of the sequence
$(m_n)$ coincide with $m$ and, hence, $m_n \to m$ in $\bfM_0(\Np)$.
\end{proof}

\subsection*{A mapping theorem}

The general version of the mapping theorem is given in Theorem
2.5 in \cite{HL06}. Here we will state a useful special case.

\begin{lem} \label{mapthm}
  Suppose $m_n \to m$ in $\bfM_0(\Np)$ and $f: \Eb \to \R^d$ is a
  measurable function with a bounded support, such that $m\bigl(\xi:\,
  \xi({\mathcal D}_f)>0\bigr)=0$, where ${\mathcal D}_f$ is the set of
  discontinuities of the function $f$. Define $T: \Np
  \to \R^d$, by $T(\xi) = \xi(f)$. Then
  \begin{align*}
    m_n \circ T^{-1}(\cdot) \to m \circ
    T^{-1}(\cdot),
  \end{align*}
  in $\bfM_0(\R^d)$.
\end{lem}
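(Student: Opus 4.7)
The plan is to verify the hypotheses of the general mapping theorem from \cite{HL06} as applied to the map $T:\Np\to\R^d$. Three ingredients are needed: $T$ sends the null measure $\xi_0$ to $0$; $T$ is continuous at $m$-a.e.\ point of $\Np$; and $T$ is proper at the origin, meaning that for each $r>0$ the set $\{\xi:|T(\xi)|>r\}$ is bounded away from $\xi_0$ in the metric $d$ on $\Np$ given by \eqref{metd}. Granted these, the conclusion follows directly. The first item is trivial since $\xi_0(f)=0$.

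For the continuity step I would fix $\xi\in\Np$ with $\xi(\mathcal{D}_f)=0$ and a sequence $\xi_n\vague \xi$. Since $f$ has bounded support contained in some relatively compact $K\subset\Eb$, one first enlarges $K$ slightly to a compact $K'$ with $\xi(\partial K')=0$; this is possible because $\xi$ is a finite sum of point masses on any compact set and so only countably many choices of slight enlargement are forbidden. Vague convergence then upgrades to weak convergence of the restrictions $\xi_n|_{K'}\to \xi|_{K'}$ as finite measures. Since $\xi|_{K'}$ is concentrated on finitely many points, none lying in $\mathcal{D}_f$, the Portmanteau theorem in its form for bounded measurable functions with a $\xi$-null set of discontinuities yields $\xi_n(f)=\xi_n|_{K'}(f)\to \xi|_{K'}(f)=\xi(f)$, i.e.\ $T(\xi_n)\to T(\xi)$.

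For the properness at $\xi_0$, the elementary bound $|T(\xi)|\leq \|f\|_\infty\,\xi(\mathrm{supp}(f))$ shows that $|T(\xi)|>r$ forces $\xi(\mathrm{supp}(f))\geq c:=r/\|f\|_\infty$. Choose any $h\in C_K^+(\Eb)$ with $h\geq 1$ on $\mathrm{supp}(f)$; by density of $(h_i)_{i\geq 1}$ in $C_K^+(\Eb)$ one can pick an index $i_0$ with $h_{i_0}\geq 1/2$ on $\mathrm{supp}(f)$. Then $\xi(h_{i_0})\geq c/2$, and therefore
$$d(\xi,\xi_0)\geq 2^{-i_0}\,\frac{c/2}{1+c/2}>0,$$
uniformly over such $\xi$, which is the required lower bound.

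The main obstacle is the continuity step: one must exploit the fact that $\xi$ places only finitely many atoms in the bounded support of $f$ and none of them are discontinuity points of $f$, so that the standard continuous mapping argument applies despite $f$ being merely measurable. Once this is in hand the remaining verifications are routine and the invocation of the general framework of \cite{HL06} closes the proof. If a self-contained argument were preferred, the three properties above translate directly into showing that for each $g\in C_0(\R^d)$ the composition $g\circ T$ is bounded, vanishes on a neighborhood of $\xi_0$ in $\Np$, and is continuous on a set of full $m$-measure, whence $m_n\to m$ in $\bfM_0(\Np)$ gives $m_n(g\circ T)\to m(g\circ T)$ as desired.
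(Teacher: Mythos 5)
Your proposal is correct and takes essentially the same route as the paper: the paper's proof is a one-line appeal to Theorem 2.5 of \cite{HL06}, checking exactly the facts you verify — that $T(\xi_0)=0$, that $T$ is continuous at every $\xi$ with $\xi({\mathcal D}_f)=0$ (hence $m$-a.e.), and a condition at the null measure ensuring $\{\xi: |T(\xi)|>r\}$ is bounded away from $\xi_0$. The only cosmetic difference is that you establish this last boundedness-away property directly via the functions $h_i$ (implicitly using that $f$ is bounded, as it is in all applications), whereas the paper deduces it from continuity of $T$ at $\xi_0$ together with $T(\xi_0)=0$.
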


\begin{proof}
  This follows from Theorem 2.5 in \cite{HL06} since $T$ is
  discontinuous  on a set of measure $m$ equal to zero, $T(\xi_0) =
  0$, and $T$ is continuous at $\xi_0$.
\end{proof}

\end{document}